\newcommand{\on}{\operatorname}
\newcommand{\ov}{\overline}
\newcommand{\wt}{\widetilde}
\newcommand{\ds}{\displaystyle}
\newcommand{\cal}{\mathcal}
\newcommand{\ntoi}[1]{\underset{n\to\infty}{#1}}
\newcommand{\tend}{\longrightarrow}
\newtheorem{theorem}{\bf Theorem}
\newtheorem{proposition}[theorem]{\bf Proposition}
\newtheorem{sublemma}[theorem]{\bf Sublemma}
\newtheorem{lemma}[theorem]{\bf Lemma}
\newtheorem*{lemma*}{\bf Lemma}
\newtheorem*{claim*}{\bf Claim}
\newtheorem{corollary}[theorem]{\bf Corollary}
\newtheorem{compl}[theorem]{\bf Complement}
\theoremstyle{remark}
\newtheorem{remark}[theorem]{\bf Remark}
\newtheorem*{remark*}{\bf Remark}
\newtheorem*{reminder}{\bf Reminder}
\theoremstyle{definition}
\newtheorem{definition}[theorem]{\bf Definition}
\newtheorem{notation}[theorem]{\bf Notation}
\newtheorem{condition}[theorem]{\bf Condition}
\newcommand{\C}{{\mathbb C}}
\newcommand{\D}{{\mathbb D}}
\renewcommand{\H}{{\mathbb H}}
\newcommand{\Q}{{\mathbb Q}}
\newcommand{\Z}{{\mathbb Z}}
\newcommand{\R}{{\mathbb R}}
\newcommand{\N}{{\mathbb N}}
\renewcommand{\S}{{\mathbb S}}
\renewcommand{\a}{\alpha}
\newcommand{\eps}{\epsilon}
\renewcommand{\epsilon}{\varepsilon}
\renewcommand{\phi}{\varphi}
\renewcommand{\emptyset}{\varnothing}
\newcommand{\dom}{\on{dom}}
\newcommand{\im}{\Im}
\newcommand{\re}{\Re}
\renewcommand{\Im}{\on{Im}}
\renewcommand{\Re}{\on{Re}}
\newcommand{\setof}[2]{\big\{{#1}\,;\,{#2}\big\}}
\begin{document}

\title{Smooth Siegel disks everywhere} \subjclass{}
\begin{author}[A.~Avila]{Artur Avila}
\address{ %
  UPMC / IMPA}
\end{author}
\begin{author}[X.~Buff]{Xavier Buff}
\address{ 
  Université Paul Sabatier\\
  Institut de Mathématiques de Toulouse \\
  118, route de Narbonne \\
  31062 Toulouse Cedex \\
  France }
\end{author}
\begin{author}[A.~Chéritat]{Arnaud Chéritat}
\address{ 
  Université Paul Sabatier\\
  Institut de Mathématiques de Toulouse \\
  118, route de Narbonne \\
  31062 Toulouse Cedex \\
  France }
\end{author}

\begin{abstract}
We prove the existence of Siegel disks with smooth boundaries in most families of holomorphic maps fixing the origin. The method can also yield other types of regularity conditions for the boundary.
The family is required to have an indifferent fixed point at $0$, to be parameterized by the rotation number $\alpha$, to depend on $\alpha$ in a Lipschitz-continuous way, and to be non-degenerate. A degenerate family is one for which the set of non-linearizable maps is not dense. We give a characterization of degenerate families, which proves that they are quite exceptional.
\end{abstract}

\maketitle

\tableofcontents

\section*{Introduction}

In \cite{PM}, Pérez-Marco was the first to prove the existence of
univalent maps $f:\D\to \C$ having Siegel disks compactly contained in $\D$ and with smooth
($C^\infty$) boundaries. In \cite{ABC},
we gave a different proof of this result,
showing the existence of quadratic polynomials having Siegel
disks with smooth boundaries (for a simplification of the proof, see
\cite{G}). In \cite{BC}, two authors of the present article
proved the existence of
quadratic polynomials having Siegel disks whose boundaries have any
prescribed regularity between $C^0$ and $C^\infty$ (the method covers other classes of regularity).

In this article, we generalize these results to most families of
maps having a non persistent indifferent cycle. 

\begin{definition}[Non-degenerate families]\label{def:dege}
Assume $I\subset \R$ is an open interval.
Consider a family of holomorphic maps $f_\alpha:\D\to \C$ parameterized by $\alpha \in I$, with \[f_\alpha(z) = e^{2\pi i\alpha}z+\cal O(z^2)\]
and assume that $f_\alpha$ depends continously\footnote{This means: $(\alpha,z)\mapsto f_\alpha(z)$ is continuous.} on $\alpha$.
We say that the family is \emph{degenerate} if the set $\{\alpha\in I~;~f_\alpha\text{ is not linearizable}\}$ is not dense in $I$. Otherwise it is called \emph{non-degenerate}.
\end{definition}

This definition is purely local so if we are given a holomorphic dynamical system on a Riemann surface, we can extend the definition above by working in a chart and restricting the map to a neighborhood of the fixed point.

For example, if $f_\alpha$ is a family of rational maps of the same degree $d\geq 2$, then it is automatically non-degenerate. Indeed, a fixed point of a rational map of degree $\geq 2$ whose multiplier is a root of unity is never linearizable.\footnote{This is a simple and classical fact, that seems difficult to find in written form. If an iterate of $f$ is conjugate on an open set $U$ to a finite order rotation then a further iterate of $f$ is the identity on $U$. Since $f^n$ is holomorphic on the Riemann sphere, it is the identity everywhere. This contradicts the fact that $f^n$ has degree $d^n>1$.}

In \Cref{app} we characterize degenerate families in the case where the dependence with respect to the parameter $\alpha$ is analytic.

\begin{notation}\label{def:KD}
Assume $f:\D\to \C$ is a holomorphic map having an indifferent fixed
point at $0$. We write
\begin{itemize}
\item $K(f)$ the set of
points in $\D$ whose forward orbit remains in $\D$ and
\item $\Delta(f)$ the connected component of the interior of $K(f)$
that contains $0$; $\Delta(f)=\emptyset$ if there is no such component.
\end{itemize}
\end{notation}

\begin{remark*}[Siegel disks] If $\Delta(f)\neq \emptyset$ it is known that $\Delta(f)$ is simply connected\footnote{This is another classical fact. See \Cref{foot:one} in \Cref{sub:sd}.} and that the restriction
$f:\Delta(f)\to \Delta(f)$ is analytically conjugate to a rotation via a conformal bijection
between $\Delta(f)$ and $\D$ sending $0$ to $0$, see \Cref{sub:proplin}.
The set $\Delta(f)$ is usually called a \emph{Siegel disk} in the case $\alpha\notin\Q$ and we will use the same terminology in this article for the case $\alpha\in\Q$, though subtleties arise. See \Cref{sub:sd}
\end{remark*}

We prove here that the main theorem in \cite{BC} holds for a non-degenerate family under the assumption that the dependence $\alpha\mapsto f_\alpha$ is Lipschitz.\footnote{By this we mean: $(\exists C>0)$ $(\forall \alpha\in I$, $\alpha'\in I,$ $z\in\D)$, $|f_{\alpha'}(z)-f_\alpha(z)|\leq C|\alpha'-\alpha|$.}  We thus get in particular (see \Cref{app:grl} for the general statement):

\begin{theorem}\label{thm:main}
Under the non-degeneracy assumptions of \Cref{def:dege}, if moreover the dependence $\alpha\mapsto f_\alpha$ is Lipschitz then
\begin{itemize}
\item $\exists \alpha\in I\setminus\Q$ such that $\Delta(f_\alpha)$ is compactly
contained in $\D$ and $\partial \Delta(f_\alpha)$ is a $C^\infty$ Jordan
curve;

\item $\exists \alpha\in I\setminus\Q$ such that $\Delta(f_\alpha)$ is compactly
contained in $\D$ and $\partial \Delta(f_\alpha)$ is a Jordan curve but
is not a quasicircle;

\item $\forall n\geq 0$, $\exists \alpha\in I\setminus\Q$,
$\Delta(f_\alpha)$ is compactly contained in $\D$ and $\partial
\Delta(f_\alpha)$ is a Jordan curve which is $C^n$ but not $C^{n+1}$.
\end{itemize}
\end{theorem}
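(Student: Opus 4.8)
The plan is to adapt the strategy of \cite{BC} to the present generality. Along a carefully chosen sequence of parameters converging to the desired $\alpha\in I\setminus\Q$ one must control two things: first, the conformal radius of $\Delta(f_\alpha)$ at $0$, which governs whether $\Delta(f_\alpha)$ is non-empty and compactly contained in $\D$; second, the geometry of $\partial\Delta(f_\alpha)$, which must be forced to acquire the prescribed regularity. For the first, one invokes the estimate developed in the body of the paper (the generalization of the Buff--Chéritat size estimate, which is its technical heart): $\alpha\mapsto \log\big(\text{conformal radius of }\Delta(f_\alpha)\text{ at }0\big)+\Phi(\alpha)$ extends to a continuous real-valued function on $I$, where $\Phi$ is the Brjuno sum (equal to $+\infty$, so that $\Delta(f_\alpha)=\emptyset$, off the Brjuno set). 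Non-degeneracy enters precisely here: if $f_\alpha$ is not linearizable then $\Delta(f_\alpha)=\emptyset$ (otherwise $f_\alpha|_{\Delta(f_\alpha)}$ would be conjugate to a rotation), so non-degeneracy makes $\Delta(f_\alpha)=\emptyset$ on a dense set; combined with upper semicontinuity of the logarithm of the conformal radius, this rigidifies the corrected function above. Since the linearizable set then has empty interior in every subinterval of $I$, one may moreover work near a non-linearizable parameter $p_0$, and (after a reduction in the spirit of \Cref{app}) take $p_0=p/q\in\Q$. Building $\alpha$ Brjuno with partial quotients growing fast but not too fast keeps the conformal radius in a fixed compact subinterval of $(0,1)$, and a short extra argument — using that $f_\alpha\colon\D\to\C$, so orbits near $\partial\D$ escape — upgrades this to $\Delta(f_\alpha)$ being compactly contained in $\D$.

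The geometric core is a \emph{parabolic pivot} lemma. Fix a rational $p_0=p/q\in I$ with $f_{p_0}$ non-linearizable; by the Leau--Fatou flower theorem $f_{p_0}$ has a finite cycle of attracting and repelling petals at $0$, and the associated Fatou coordinates yield a real-analytic one-parameter family of analytic Jordan curves $\gamma_{p_0}^{(s)}$ surrounding $0$ (boundaries of suitable petal-like regions), indexed by a scale parameter $s$. The claim is that as an irrational $\alpha$ tends to $p_0$ from a suitable side, with the associated ``first'' partial quotient of $\alpha$ relative to $p/q$ tending to $\infty$, the Siegel disk $\Delta(f_\alpha)$ is eventually non-empty and compactly contained in $\D$, and $\partial\Delta(f_\alpha)$ converges to some $\gamma_{p_0}^{(s(\alpha))}$ in $C^\infty$ away from a bounded number of contact points — crucially, with $C^k$ bounds uniform over a parameter neighbourhood. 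For the quadratic family this is parabolic implosion à la Inou--Shishikura; for a general non-degenerate Lipschitz family one instead constructs quasiconformal models of $f_\alpha$ by interpolating, in the Fatou coordinates of $f_{p_0}$, between $f_\alpha$ and a rigid rotation model, and transports the $C^k$ estimates through the resulting conjugacies. This is the step I expect to be the main obstacle, precisely because the efficient renormalization machinery is unavailable in this generality and all the regularity control must be obtained by hand.

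With the pivot lemma in hand, one runs a simultaneous induction producing $\alpha=\lim_n\alpha_n$ with $|\alpha_{n+1}-\alpha_n|$ extremely small — small enough to preserve everything arranged at steps $\le n$, yet compatible with the Brjuno condition, so the limit $\alpha$ is irrational with $\Delta(f_\alpha)\neq\emptyset$. At step $n$ one replaces $\alpha_n$ by an irrational parameter very close to a non-linearizable rational, chosen so that at a scale $\delta_n\downarrow 0$ the curve $\partial\Delta(f_{\alpha_{n+1}})$ is pinned to within a $C^n$-error $\ll\delta_n$ of a prescribed analytic model curve. For the first conclusion one chooses these models so their local profiles converge in $C^\infty$, giving a $C^\infty$ boundary in the limit; for the third, one builds into the models at scale $\delta_n$ a self-similar corner of Hölder type tuned so the limiting curve is exactly $C^n$ but not $C^{n+1}$; for the second, one inserts at the scales $\delta_n$ a sequence of progressively sharper but never closing near-cusps, destroying the Ahlfors three-point condition while keeping the limit a Jordan curve. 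Passing to the limit, the conformal-radius control of the first paragraph gives $\Delta(f_\alpha)$ compactly contained in $\D$, and an argument based on normal families and kernel convergence in the sense of Carathéodory shows $\partial\Delta(f_{\alpha_n})\to\partial\Delta(f_\alpha)$ strongly enough that the prescribed regularity survives, yielding the three parameters claimed. The general version of the statement, covering arbitrary regularity classes between $C^0$ and $C^\infty$, follows by the same scheme and is recorded in \Cref{app:grl}.
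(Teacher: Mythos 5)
Your first paragraph asserts that $\alpha\mapsto\log r(\alpha)+\Phi(\alpha)$ extends to a continuous real-valued function on $I$, with $r(\alpha)=0$ off the Brjuno set. That is the Buff--Ch\'eritat fine estimate, proved in the quadratic case through the optimality of the Brjuno condition (Yoccoz) combined with a delicate two-sided continuity of $\Phi$-corrected conformal radius. Neither piece is available here, and the paper makes no such claim. The entire point of the present argument is to \emph{avoid} assuming that non-Brjuno parameters are non-linearizable (or equivalently that $r=0$ off $\cal B$), since this is unknown for a general Lipschitz family. Non-degeneracy only gives a \emph{dense} set of non-linearizable parameters, and what the paper actually proves is a one-sided estimate (\Cref{lem:main,lem:pert}): along the \emph{specific} sequences $\alpha_n$ built from continued fractions with a $1+\sqrt2$ tail, $\liminf r(f_{\alpha_n})\geq r(f_\alpha)$ (irrational case) or $\geq r(f_\alpha)e^{-C'(K,q)}$ (rational case). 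This is much weaker than continuity of $\log r+\Phi$, and is all that is needed to obtain \Cref{cond:bdd} via the intermediate-value argument of \Cref{cor:follow}. Stating the Buff--Ch\'eritat continuity as a known input is therefore a genuine gap: it is not proved, and would, if true, also give Brjuno optimality, which is precisely what the paper does without.

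Your second paragraph proposes a parabolic-implosion route: Leau--Fatou petals at a non-linearizable rational $p/q$, Fatou coordinates, quasiconformal models interpolating $f_\alpha$ with a rotation, and transport of $C^k$ bounds. This is a different strategy from the paper's, which uses Yoccoz-style sector renormalization via the gluing construction of \Cref{sub:glue,sub:direct}, and needs no Fatou coordinates or quasiconformal surgery on the dynamics (only a quasiconformal bound on the uniformization $L$ of the glued cylinder, \Cref{lem:gluev3,lem:gluev4}). You flag the quasiconformal interpolation step as ``the main obstacle''; it is indeed a gap, and it is exactly where the paper's \Cref{lem:main,lem:main2} provide the missing estimate, proved by a \emph{one-step} renormalization applied directly to the high iterates $H=T^{-p_k}\circ F^{q_k}$, $J=T^{-p_{k-1}}\circ F^{q_{k-1}}$, exploiting that the Lipschitz hypothesis makes $H$ and $J$ uniformly close to translations so that the gluing is controlled at a height $y_0\to0$. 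Once that lemma is in hand, the construction of the boundary regularity is precisely the $\liminf$-$\limsup$/accumulation argument of \cite{BC}, applied with \Cref{cond:bdd} replacing Brjuno optimality — much as your third paragraph outlines, but without needing the parabolic ``pivot'' model curves at all: the linearizing maps $\phi_{\alpha_n}$ themselves are the objects whose $C^k$ norms are controlled inductively.

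In short: the scheme of the third paragraph is in the right spirit, but the two inputs it rests on — continuity of $\log r+\Phi$, and the parabolic pivot lemma — are respectively a false premise and an unproved (and avoided) alternative to the paper's actual technical heart, \Cref{lem:main}.
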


A family satisfying the assumptions on the interval $I$ also satisfies them on every sub-interval. It follows that the parameters $\alpha$ in the theorem above are in fact dense in $I$.

\begin{remark*} If $f_\alpha$ is a restriction of another map $g_\alpha$ and $\Delta(f_\alpha)\Subset\D$, then $\Delta(f_\alpha) =\Delta(g_\alpha)$, see \Cref{sub:sd}. So the result gives information on the Siegel disks not only of maps from $\D$ to $\C$ but in fact of any kind of analytic maps, for instance polynomials $\C\to\C$, rational maps $\S\to\S$, entire maps $\C\to\C$, \ldots
\end{remark*}

\begin{remark*}
The main tool for \Cref{thm:main} is Yoccoz's sector renormalization as in several of our previous works (except \cite{A} who uses Risler's work instead \cite{R}). In \cite{A}, \cite{ABC} and \cite{BC} it was crucial to have a family for which it is known that $f_\alpha$ is linearizable if and only if $\alpha$ is a Brjuno number. The progress here is to get rid of this assumption.
\end{remark*}

\section{Conformal radius, straight combs and the general construction.}\label{sec:grl}

The method that Buff and Chéritat first developped to get smooth Siegel disks is one of the offsprings of a fine control, initiated in \cite{C}, on the periodic cycles that arise when one perturbs parabolic fixed points. Still today we can only make it work in specific contexts, which includes quadratic polynomials for instance. With the smooth Siegel disk objective in mind, Avila was able in \cite{A} to identify essential sufficient properties so as to allow for a partial generalization, and also pointed to the bottleneck for a complete generalization. In this section, we essentially follow the presentation in \cite{A}. We also mention a connection with continuum theory.

\medskip

In this whole section, except \Cref{sub:sd}, we consider a non-empty open interval $I\subset \R$ and a continuous family of analytic maps $f_\alpha:\D\to \C$ parameterized by $\alpha\in I$ with $f_\alpha(z)=e^{2\pi i\alpha}z+\cal O(z^2)$.

\subsection{Siegel disks and restrictions}\label{sub:sd}

Given a one dimensional complex manifold $S$ and a holomorphic map
\[f : U\to S\]
defined on an open subset $U$ of $S$, assume there is a neutral fixed point $a\in U$ of multiplier $e^{2\pi i\alpha}$ with 
$\alpha\in\R$.
Call \emph{rotation domain} any open set containing $a$ on which the map is analytically conjugate to a rotation on a Euclidean disk or on the plane or on the Riemann sphere.\footnote{The last case is extremely specific, for we must have $U=S$ isomorphic to the Riemann sphere and $f$ is a rotation.}
If $\alpha\notin\Q$ then the rotation domains are totally ordered by inclusion. This is \emph{never} the case if $\alpha\in\Q$. If $\alpha\notin\Q$ there is a maximal element for inclusion, called \emph{the} Siegel disk\footnote{In the case of a rotation on the Riemann sphere this name is not appropriate since the disk is a sphere\ldots} of $f$ at point $a$. If $\alpha\in\Q$ existence of a maximal element may also fail, depending on the situation. If $\alpha\notin\Q$ the Siegel disk of a restriction is automatically a subset of the original Siegel disk. If $\alpha\in\Q$ this may fail. 

\begin{remark*}
The right approach in the general case is probably to use the \emph{Fatou set}. Here is not the place for such a treatment, so we only give results specific to our situation 
\end{remark*}

In the sequel we assume $S=\C$ and $U$ is bounded and simply connected.

\medskip

We let $K$ be the set of points whose orbit is defined for all times. The set $K\subset U$ is not necessarily closed in $U$. We let $\Delta(f)=\Delta\subset U$ where $\Delta$ is the connected component containing $a$ of the interior $K^{\circ}$, or $\Delta = \emptyset$ if $a\notin K^{\circ}$.
Then $\Delta$ is necessarily simply connected: this is one classical application of the maximum principle.\footnote{\label{foot:one}If $\Delta$ would not be simply connected then there would exist a bounded closed set $C\neq \emptyset$ (not necessarily connected) such that $\Delta\cap C=\emptyset$ and such that $\Delta':=\Delta(f)\cup C$ is open and connected (this is a theorem in planar topology). By the maximum principle, $f^k(\Delta')\subset \D$ for all $k$. Hence $\Delta'$ would be an open subset of $K(f)$, contradicting the definition of $\Delta$.} Any rotation domain for $a$ is necessarily contained in $K$.
It follows that any rotation domain for $a$ is in fact contained in $\Delta$.
Moreover, let us prove that $\Delta$ itself is a rotation domain:
\begin{proof}
First note that we have $f(\Delta)\subset \Delta$. The set $\Delta$ is conformally equivalent to the unit disk $\D$. Conjugate $f$ by a 
conformal map from $\Delta$ to $\D$ sending $a$ to $0$. We get a holomorphic self-map of $\D$ with a neutral fixed point at its center. By the case of equality in Schwarz's lemma this self-map is a rotation.
\end{proof}

\begin{corollary}\label{cor:subdisk}
(We do not make an assumption on $\alpha$.) Let $U'$ be an open subset of $\C$.
Let $g:U'\to \C$ be holomorphic with a neutral fixed point $a$.
Assume $U$ is an open subset of $U'$ containing $a$ and let $f$ be the restriction of $g$ to $U$. Then $\Delta(f)\subset\Delta(g)$.
If moreover $U$ and $U'$ are simply connected and if $\Delta(f)$ is compactly contained in the domain of definition of $f$ then $\Delta(g)=\Delta(f)$. 
\end{corollary}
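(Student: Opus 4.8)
The first assertion, $\Delta(f) \subset \Delta(g)$, is essentially immediate from the definitions. First I would observe that $K(f) \subset K(g)$: if $z \in U$ has a forward orbit under $f$ that stays in $U$, then since $f = g|_U$ that same orbit is a forward orbit under $g$ staying in $U \subset U'$, so $z \in K(g)$. Passing to interiors, $K(f)^\circ$ is an open subset of $K(g)$, hence is contained in $K(g)^\circ$. Now $\Delta(f)$ is the connected component of $K(f)^\circ$ containing $a$ (or empty); if nonempty it is a connected subset of $K(g)^\circ$ containing $a$, hence contained in the component $\Delta(g)$ of $K(g)^\circ$ containing $a$. If $\Delta(f) = \emptyset$ there is nothing to prove. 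So $\Delta(f) \subset \Delta(g)$ in all cases.

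For the second assertion I assume $U$ and $U'$ simply connected and $\Delta(f) \Subset U$ (the domain of $f$). By the preceding paragraph it suffices to show $\Delta(g) \subset \Delta(f)$. The idea is that $\Delta(f)$, being compactly contained in $U$, is already ``maximal'' and cannot be enlarged by the extension. I would argue as follows. We have shown (just before the corollary) that $\Delta(f)$ is a rotation domain for $f$ at $a$, i.e.\ $f : \Delta(f) \to \Delta(f)$ is conformally conjugate to a rotation; since $f = g|_U$ and $\Delta(f) \subset U$, it is also a rotation domain for $g$ at $a$. Likewise $\Delta(g)$ is a rotation domain for $g$. Both $\Delta(f)$ and $\Delta(g)$ are simply connected (by footnote \ref{foot:one}, applied in $U$ and in $U'$ respectively), and both contain $a$.

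The key step is then to compare these two rotation domains. If $\alpha \notin \Q$, rotation domains at $a$ are totally ordered by inclusion (as recalled in \Cref{sub:sd}), so from $\Delta(f) \subset \Delta(g)$ and maximality of $\Delta(f)$ among rotation domains contained in $U$ — together with the fact that $\Delta(g) \cap U$ is a rotation domain contained in $U$, hence $\subset \Delta(f)$ — one would like to conclude. But the cleanest uniform argument, valid also for $\alpha \in \Q$, is this: consider the conformal conjugacy $\phi : \D \to \Delta(g)$ with $\phi(0) = a$ conjugating $g|_{\Delta(g)}$ to the rotation $R_\alpha$. Pull back: $V := \phi^{-1}(\Delta(f) \cap \Delta(g)) = \phi^{-1}(\Delta(f))$ is an open subset of $\D$ containing $0$, invariant under $R_\alpha$, and since $\Delta(f) \Subset U$ one checks that $\phi^{-1}(\Delta(f))$ is relatively closed in... — here I would instead use a boundary argument: $\partial \Delta(f)$ is a compact subset of $U$ and is $g$-invariant (it is the boundary of a fully invariant set up to the orbit structure), on which the orbit of each point stays in the compact set $\overline{\Delta(f)} \subset U$, so $\partial\Delta(f) \subset K(g)$; if some point of $\partial \Delta(f)$ lay in the open set $\Delta(g) = K(g)^\circ$-component of $a$... this needs care. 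I expect the main obstacle to be precisely this: ruling out that $\Delta(g)$ pokes across $\partial\Delta(f)$. The right tool is that $\overline{\Delta(f)}$ is a compact, connected, forward-invariant subset of $U'$ containing $a$; since $\Delta(f) \Subset U \subset U'$ we get $\overline{\Delta(f)} \subset K(g)$, so $\Delta(f) \cup (\text{nearby points})$... Concretely: $\overline{\Delta(f)}$ is compact in $U'$ and $g$-invariant, hence $\overline{\Delta(f)} \subset K(g)$, hence every point of the open set $\Delta(f)$ together with — no. The honest fix: take any point $b \in \Delta(g) \setminus \Delta(f)$; the segment in $\Delta(g)$ from $a$ to $b$ crosses $\partial\Delta(f) \subset U$ at some point $c$. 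But $c \in \Delta(g) \subset K(g)$ and $c$ has a neighborhood in $\Delta(g)$ on which $g$ is conjugate to a rotation, so a whole neighborhood of $c$ has forward orbit (under $g$, equivalently under $f$ near $c$ since $c \in U$) contained in the compact rotation domain, hence in $U$; thus $c \in K(f)^\circ$, contradicting $c \in \partial\Delta(f)$ once we check this neighborhood-orbit stays in $U$ — which holds because the orbit of $c$'s neighborhood stays in the compact $\overline{\Delta(g)} \cap (\text{small ball})$, but $\overline{\Delta(g)}$ need not be in $U$. The genuine resolution uses $\Delta(f)\Subset U$ more carefully: shrink so that an open neighborhood $N$ of $\overline{\Delta(f)}$ has $N \Subset U$; then for $c$ near $\partial\Delta(f)$ in $\Delta(g)$, its forward $g$-orbit lies in the rotation orbit closure inside $\Delta(g)$, which accumulates only on a circle through $c$ inside $\Delta(g)$, and for $c$ close enough to $\partial\Delta(f)$ this circle... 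I would organize the final writeup around: $\Delta(f)$ open, forward-$g$-invariant, relatively compact in $U$; its closure is then forward-$g$-invariant and compact in $U'$; apply footnote \ref{foot:one}-style maximum-principle reasoning to conclude $\Delta(f)$ is a connected component of $K(g)^\circ$, hence equals $\Delta(g)$. That last maximum-principle step is the crux and the main obstacle.
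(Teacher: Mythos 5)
Your first part is fine and matches the paper's one‑line "immediate" remark, so no issue there. For the second part you never close the argument: you start the pull‑back via $\phi:\D\to\Delta(g)$, abandon it, try a boundary‑crossing argument, and end by declaring the decisive step "the crux and the main obstacle." That is an honest but real gap. The paper's proof is short and uses exactly the pull‑back you abandoned: let $A=\phi^{-1}(\Delta(f))\subset\D$, which is open, simply connected, contains $0$, and is $R_\alpha$‑invariant. If $A\neq\D$, then one enlarges $A$ slightly to an $R_\alpha$‑invariant open set $A'$ with $A\subsetneq A'$ and $\phi(A')\subset U$; this is where $\Delta(f)\Subset U$ enters, since for $\alpha\notin\Q$ one has $A=B(0,r)$ with $r<1$, hence $\overline{\Delta(f)}=\phi(\overline{B(0,r)})\Subset U$ and one can take $A'=B(0,r')$ with $r'$ slightly larger. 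Then $\phi(A')$ is a connected, $R_\alpha$‑invariant (hence forward‑invariant) open subset of $K(f)$ strictly containing $\Delta(f)$, contradicting that $\Delta(f)$ is the component of $K(f)^{\circ}$ at $0$.

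In your own "boundary'' route the missing idea is precisely the one you gesture at and drop: for $c\in\partial\Delta(f)\cap\Delta(g)$, the $g$‑orbit of $c$ stays in $\overline{\Delta(f)}\Subset U$, not merely in $\overline{\Delta(g)}$ (because $\overline{\Delta(f)}$ is compact and forward‑invariant). For $\alpha\notin\Q$ the orbit closure of $c$ in $\Delta(g)$ is a Jordan curve $\gamma\subset\overline{\Delta(f)}\Subset U\cap\Delta(g)$; since $U$ and $\Delta(g)$ are simply connected, the closed topological disk bounded by $\gamma$, together with a thin $g$‑invariant annular neighborhood of $\gamma$ in $\Delta(g)$, is a connected forward‑invariant open set $\Subset U$ containing $c$, so $c\in K(f)^{\circ}$ — contradiction. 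For $\alpha=p/q$ the orbit of $c$ is a finite set in $U$, and $g^{q}=\mathrm{id}$ on $\Delta(g)$, so small balls around the orbit give the same contradiction. Without one of these two completions (the paper's enlargement of $A$, or this invariant‑curve/finite‑orbit argument), the proof is not finished, and the vague appeal to "footnote‑\ref{foot:one}‑style maximum‑principle reasoning" does not supply it: the maximum principle gives simple connectivity, not that $\Delta(f)$ is an entire component of $K(g)^{\circ}$.
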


\begin{proof}
The first claim is immediate.
For the second claim when $\Delta(f)$ is compactly contained in $U$, consider the image of $\Delta(f)$ by the uniformization $(\Delta(g),0)\to (\D,0)$: we get a simply connected subset $A$ of $\D$, invariant by the rotation. In the case $\alpha\notin\Q$ this has to be a disk $B(0,r)$ with $r\leq 1$. If $\alpha\in\Q$, more sets are possible. In any case if $A$ is not equal to $\D$ we can construct a connected invariant open subset of $K(f)$ that strictly contains $\Delta(f)$, leading to a contradiction.
\end{proof}

In the case $U=\D$ our definition of $\Delta$ coincides with \Cref{def:KD}.

\subsection{Properties of the conformal radius as a function of the angle}

Recall that we consider a non-empty open interval $I\subset \R$ and a continuous family of analytic maps $f_\alpha:\D\to \C$ parameterized by $\alpha\in I$ with $f_\alpha(z)=e^{2\pi i\alpha}z+\cal O(z^2)$.
The set $\Delta(f_\alpha)$ has been defined in \Cref{def:KD}, and \Cref{sub:sd} gave a mild generalization and basic properties.

\begin{definition}\label{def:confrad}
We let $r(\alpha)$ denote the conformal radius at $0$ of $\Delta(f_\alpha)$ if it is not empty. Otherwise we set $r(\alpha)=0$.
\end{definition}

Recall that the \emph{conformal radius} of a simply connected open subset $U$ of $\C$ at a point $a\in U$ is defined as the unique $r\in(0,+\infty]$ such that there exists a conformal bijection $\phi : B(0,r) \to U$ with $\phi(0)=a$ and $\phi'(0) = 1$ .

\begin{proposition}\label{prop:basic}
Let $\cal B$ denote the set of Brjuno numbers.\footnote{See \cite{Y} for a definition.}
The function $\alpha\mapsto r(\alpha)$ has the following properties:
\begin{enumerate}
\item It is upper semi-continuous: $\forall \alpha\in\R$, $\limsup_{x\to\alpha} r(x) \leq r(\alpha)$.
\item It takes positive values at Brjuno numbers: $\alpha\in \cal B \implies r(\alpha)>0$.
\item\label{item:basic:3} It is weakly lower semi-continuous on the left and on the right at every Brjuno number: $\alpha \in \cal B$ $\implies$ $\limsup_{x\to\alpha^-} r(x) \geq r(\alpha)$ and $\limsup_{x\to\alpha^+} r(x) \geq r(\alpha)$.
\end{enumerate}
Weak lower semi-continuity on each side can be rephrased as follows: there exists $\alpha_n <\alpha<\alpha'_n$ with $\alpha_n\to\alpha$ and $\alpha'_n\to\alpha$ such that $\lim r(\alpha_n) \geq r(\alpha)$ and $\lim r(\alpha'_n) \geq r(\alpha)$. Since $f$ is upper semi-continuous, these limits are in fact equal to $r(\alpha)$.
\end{proposition}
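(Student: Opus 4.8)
The plan is to establish the three items separately, drawing on classical facts about linearization and Siegel disks together with the definition of $\Delta(f_\alpha)$.

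For item (1), upper semi-continuity, I would argue by contradiction using a normal families / Hurwitz-type argument. Suppose $\alpha_n \to \alpha$ with $r(\alpha_n) \to \ell > r(\alpha)$ (allowing $\ell = +\infty$, though in fact $r$ is bounded since $\Delta(f_\alpha) \subset \D$). For each $n$ pick the normalized uniformization $\phi_n : B(0, r(\alpha_n)) \to \Delta(f_{\alpha_n})$ with $\phi_n(0)=0$, $\phi_n'(0)=1$; these maps are univalent with image in $\D$, so by Koebe distortion the family is normal on any fixed disk $B(0,\rho)$ with $\rho < \ell$. Passing to a subsequence, $\phi_n \to \phi$ locally uniformly on $B(0,\ell)$, and $\phi$ is univalent (non-constant since $\phi'(0)=1$) with image in $\ov{\D}$; by the open mapping theorem the image lies in $\D$. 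The linearization equation $\phi_n(e^{2\pi i \alpha_n} z) = f_{\alpha_n}(\phi_n(z))$ passes to the limit using continuity of the family in $\alpha$, so $\phi$ conjugates the rotation by $\alpha$ to $f_\alpha$ on $B(0,\ell)$. Hence $\phi(B(0,\ell))$ is a rotation domain for $f_\alpha$ at $0$, so by \Cref{sub:sd} it is contained in $\Delta(f_\alpha)$, forcing $r(\alpha) \geq \ell$, a contradiction. One subtlety to address: the image must genuinely be relatively compact in $\D$ for the rotation-domain conclusion to be clean — one handles the boundary case $\rho = \ell$ by exhausting with $\rho < \ell$, which suffices since $\Delta(f_\alpha)$ is then seen to contain every $\phi(B(0,\rho))$.

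Item (2) is essentially the statement of Brjuno's linearization theorem combined with the remark in \Cref{sub:sd} that $\Delta(f_\alpha)$ is the maximal rotation domain: if $\alpha \in \mathcal{B}$ then $f_\alpha$, having an irrationally indifferent fixed point with Brjuno rotation number, is linearizable, so it admits a nonempty rotation domain, which lies in $\Delta(f_\alpha)$; hence $\Delta(f_\alpha) \neq \emptyset$ and $r(\alpha) > 0$. (The only point to note is that the paper's normalization of "Siegel disk" as a connected component of $K^\circ$ agrees with the maximal rotation domain, which was proved in \Cref{sub:sd}.)

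Item (3), weak lower semi-continuity from each side at a Brjuno number, is the heart of the matter and I expect it to be the main obstacle. The idea is to approximate $\alpha \in \mathcal{B}$ from the left and from the right by numbers $\alpha_n$ for which $r(\alpha_n)$ is at least $r(\alpha)$; combined with item (1) this pins the limit to exactly $r(\alpha)$. The natural source of such approximants is the set of Brjuno numbers together with a continuity estimate for the conformal radius along sequences converging to $\alpha$ that stay "Brjuno with controlled sum." Concretely, one would invoke the Yoccoz-type lower bound for the conformal radius of the Siegel disk in terms of the Brjuno sum $\Phi(\alpha) = \sum \frac{\log q_{n+1}}{q_n}$, and the fact that the Brjuno function is upper semi-continuous on the left and on the right at every Brjuno number (because truncating the continued fraction of $\alpha$ and then modifying the tail produces, on each side, approximants whose Brjuno sum converges down to $\Phi(\alpha)$). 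Actually a cleaner route, and the one I would follow, is to use the family itself: by the remark after the proposition this is really a statement about the existence of one-sided sequences realizing $r(\alpha)$ in the limit, and one can produce them from the linearizing coordinates of $f_\alpha$ by a small perturbation of the rotation number to a nearby rational or Brjuno value on the desired side, using the Lipschitz (or merely continuous) dependence to guarantee that the perturbed map still has a nearly-as-large rotation domain. The delicate part is ensuring one can move strictly to one side; for this I would use that any real number is a limit, from both sides, of Brjuno numbers whose Brjuno sum is arbitrarily close to (hence, in the relevant sense, dominated in the limit by) that of $\alpha$ — a number-theoretic approximation lemma about the Brjuno function that, I anticipate, is exactly where the real work lies and where I would expect the paper to either cite \cite{Y} or supply a short continued-fraction argument.
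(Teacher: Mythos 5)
Your items (1) and (2) are correct and follow essentially the paper's route: the normal-families argument you sketch for upper semi-continuity is what \Cref{lem:li} and \Cref{lem:usc} formalize, and item (2) is Brjuno's linearization theorem combined with the maximality of $\Delta(f_\alpha)$ among rotation domains established in \Cref{sub:sd}.

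The gap is in item (3). You correctly identify it as the hard part and you point at the right arithmetic objects: the Brjuno sum $\Phi$ and one-sided approximants $\alpha_n\to\alpha$ with $\Phi(\alpha_n)\to\Phi(\alpha)$. But the mechanism you propose cannot close the argument. The Yoccoz-type estimate $|\log r(\alpha)+\Phi(\alpha)|\leq C$ only constrains $\log r(\alpha_n)$ to within a window of width $2C$ around $-\Phi(\alpha)$ once $\Phi(\alpha_n)\to\Phi(\alpha)$, and therefore cannot give $\liminf r(\alpha_n)\geq r(\alpha)$: the defect $\log r(\alpha)+\Phi(\alpha)$ is merely bounded, not constant, so $r(\alpha)$ could sit near the top of that window while $\liminf r(\alpha_n)$ sits near the bottom. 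The input the paper actually uses is a strictly finer theorem from \cite{ABC} (or, independently, Risler \cite{R}): if $\alpha_n\to\alpha$ with $\Phi(\alpha_n)\to\Phi(\alpha)$, then $r(\alpha_n)\to r(\alpha)$. This is a genuine continuity statement for the defect $\log r+\Phi$ along such sequences, proved via a delicate iterated-renormalization analysis; it is the real mathematical content behind item (3) and does not follow from the Yoccoz inequalities alone.

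Your alternative route is also not viable. Continuity of $\alpha\mapsto f_\alpha$ gives no one-sided lower semi-continuity of $r$ at all ($r$ vanishes on a dense set of nearby parameters in any non-degenerate family). A Lipschitz hypothesis does yield one-sided perturbation estimates, but that is precisely the output of the paper's main lemma (\Cref{lem:main}), which occupies all of \Cref{sec:proof} and cannot be assumed as an ingredient at this point; moreover item (3) is stated for all \emph{continuous} families and the paper proves it by citing \cite{ABC}/\cite{R}, not by anything depending on a Lipschitz hypothesis.
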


The first property is classical, see \Cref{lem:usc}.
The second is Brjuno's theorem, \cite{Br,Ru,Y}.
The third follows from Risler's work \cite{R} or from a fine study of Yoccoz's renormalization \cite{ABC}.
According to the method by Buff and Chéritat explained in \cite{ABC}, to get $\liminf r(\alpha_n)\geq r(\alpha)$ it is enough to take a sequence $\alpha_n$ such that $\Phi(\alpha_n)\to \Phi(\alpha)$ where $\Phi$ denotes Yoccoz's variant of the Brjuno sum.
Both references also imply:

\begin{compl}\label{compl:bdd}
In \Cref{item:basic:3} above, one can take sequences $\alpha_n$ and $\alpha'_n$ that are bounded type numbers (Diophantine of order $2$).
\end{compl}

For instance if $\alpha = a_0+1/(a_1+1/\ldots) = [a_0; a_1,\ldots]$ is the continued fraction expansion of $\alpha\in \cal B$ then the sequences $(\theta_{2n})$ and $(\theta_{2n+1})$ work, where $\theta_n= [a_0;a_1, \ldots, a_n, 1+a_{n+1}, 1, 1, 1, \ldots]$. Indeed $\theta_n\to \alpha$, alternating on each side of $\alpha$ and one can check that $\Phi(\theta_n) \to \Phi(\alpha)$, as follows for instance from the remark after Proposition 2 in \cite{ABC}.

Note that in the particular case of a family $f_\alpha$ that depends on $\alpha$ in a Lipschitz-continuous way, \Cref{compl:bdd} is also a corollary of the main lemma of the present article, \Cref{lem:main}.

\subsection{Properties of the linearizing map}\label{sub:proplin}

Consider $f_\alpha$ and recall that $K(f_\alpha)$ is defined as the set of points whose orbit stays in $\D$. Assume that the interior of $K(f_\alpha)$ contains $0$ and recall that $\Delta = \Delta(f_\alpha)$ is defined as the connected component containing $0$ of the interior of $K(f_\alpha)$. Recall that $\Delta$ is simply connected. Any uniformization $\phi:r\D\to\Delta$ with $\phi(0) = 0$ must linearize $f_\alpha$ because: first $f(\Delta)\subset\Delta$, second the conjugate is a self-map of $r\D$ that fixes $0$ with multiplier of modulus one, so the case of equality of Schwarz's lemma implies that it is a rotation.

\begin{notation}[Linearizing map]\label{def:phi}
We let $\phi_\alpha : r(\alpha)\D \to \Delta(f_\alpha)$ be the unique uniformization such that $\phi_\alpha(0) = 0$ and $\phi'_\alpha(0) = 1$. 
\end{notation}

We also write
\[R_\alpha(z) = e^{2\pi i \alpha} z.\]
Now assume $\alpha$ is irrational. It is well known then that there is a unique formal power series $\Phi(X)$ with $\Phi(X) = X + \cal O(X^2)$ and $\Phi \circ R_\alpha = f_\alpha \circ \Phi$.
It follows that when $f_\alpha$ is linearizable, $\Phi$ is the power series expansion of $\phi_\alpha$. In particular the radius of convegence of $\Phi$ is greater or equal to $ r(\alpha)$.\footnote{They do not have to be equal, as the maps $f_\alpha$ may extend beyond $\D$ and the extension may well have a bigger Siegel disk.}

Consider any holomorphic map $\psi$ satisfying $\psi(0)=0$, $\psi'(0)=1$ and such that $\psi \circ R_\alpha = f_\alpha\circ \psi$ holds near $0$. Then $f$ is linearizable and if $\alpha\notin\Q$ then $\psi$ must coincide with $\phi_\alpha$ near $0$: this can be seen either by comparing to $\Phi$ as above, or more directly: $\phi_\alpha^{-1}\circ\psi$ has derivative $1$ at $0$, commutes with $R_\alpha$ and $\alpha$ is irrational so its power series expansion is reduced to a linear term only.

\begin{lemma}[Convergence of the linearizing maps]\label{lem:li}
Consider $\alpha_n\to \alpha$ and let $\rho=\liminf r(\alpha_n)$.
Then $r(\alpha)\geq \rho$.\footnote{It follows that $r(\alpha)\geq \limsup r(\alpha_n)$, see  \Cref{lem:usc}.}
Assume that $\rho>0$. If $\alpha\in\Q$ assume moreover that $\rho\geq r(\alpha)$, i.e.\ $\rho=r(\alpha)$. Then
$\phi_{\alpha_n} \tend \phi_\alpha $ uniformly on compact subsets of $\rho\D$.
\end{lemma}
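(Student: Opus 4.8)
The plan is to establish the inequality $r(\alpha)\geq\rho$ and the convergence statement by a single normal families argument, identifying the subsequential limits by means of the rigidity results of \Cref{sub:sd} and \Cref{sub:proplin}. First I would pass to a subsequence along which $r(\alpha_n)\to\rho$ (note $\rho\leq 1$, since each $\Delta(f_{\alpha_n})$ sits inside $\D$). For any $s<\rho$ the maps $\phi_{\alpha_n}$ are, for $n$ large, defined and univalent on $\ov{B(0,s)}$ with values in $\D$, so $\{\phi_{\alpha_n}\}$ is normal on $\rho\D$; extract a locally uniformly convergent subsequence $\phi_{\alpha_{n_k}}\to\psi$ on $\rho\D$. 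Since $\psi'(0)=1$, the limit $\psi$ is non-constant, hence univalent by Hurwitz, and $\psi(\rho\D)\subset\D$ by the maximum principle. Passing to the limit in the functional equation $\phi_{\alpha_{n_k}}\circ R_{\alpha_{n_k}}=f_{\alpha_{n_k}}\circ\phi_{\alpha_{n_k}}$, using $R_{\alpha_{n_k}}\to R_\alpha$ and the assumed joint continuity of $(\alpha,z)\mapsto f_\alpha(z)$, gives $\psi\circ R_\alpha=f_\alpha\circ\psi$ on $\rho\D$. Then $V:=\psi(\rho\D)$ is an open neighbourhood of $0$, forward invariant under $f_\alpha$, on which $f_\alpha$ is conjugate via $\psi$ to the rotation $R_\alpha$ on the round disk $\rho\D$, i.e.\ $V$ is a rotation domain. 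By \Cref{sub:sd} any rotation domain is contained in $\Delta(f_\alpha)$, so $\Delta(f_\alpha)\neq\emptyset$, and since $\psi\colon\rho\D\to V$ is a conformal bijection normalized by $\psi(0)=0$, $\psi'(0)=1$, the conformal radius of $V$ at $0$ equals $\rho$; hence $r(\alpha)\geq\rho$, and in particular $\phi_\alpha$ is defined on all of $\rho\D$.

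For the convergence it suffices to show that every subsequential limit $\psi$ obtained as above coincides with $\phi_\alpha$ on $\rho\D$; a standard subsequence argument then gives $\phi_{\alpha_n}\to\phi_\alpha$ locally uniformly on $\rho\D$. If $\alpha\notin\Q$, this is immediate from the uniqueness of the linearizing map recalled in \Cref{sub:proplin}: any holomorphic $\psi$ with $\psi(0)=0$, $\psi'(0)=1$ satisfying $\psi\circ R_\alpha=f_\alpha\circ\psi$ near $0$ must coincide with $\phi_\alpha$ near $0$, hence on the connected set $\rho\D$ by analytic continuation. If $\alpha\in\Q$ this uniqueness fails, and here the extra hypothesis $\rho=r(\alpha)$ enters: $V=\psi(\rho\D)$ is a rotation domain contained in $\Delta(f_\alpha)$ whose conformal radius at $0$ equals $r(\alpha)$, so by strict monotonicity of the conformal radius under inclusion of simply connected domains through a common point (the equality case of Schwarz's lemma, already used in \Cref{sub:sd}) we obtain $V=\Delta(f_\alpha)$; then $\psi$ and $\phi_\alpha$ are both normalized Riemann maps $\rho\D\to\Delta(f_\alpha)$, so $\psi=\phi_\alpha$.

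I expect the main obstacle to be the rational case: there is no formal power series rigidity available, so the identification of the limit must be carried out geometrically, and it is essential both that the first part of the lemma has already produced the bound $r(\alpha)\geq\rho$ (so that $\phi_\alpha$ genuinely lives on $\rho\D$) and that the hypothesis $\rho=r(\alpha)$ lets one upgrade the inclusion $\psi(\rho\D)\subset\Delta(f_\alpha)$ to an equality. The remaining ingredients --- normality, Hurwitz, passing to the limit in the functional equation, and the footnote upgrading $\liminf$ to $\limsup$ (apply the statement to a subsequence realizing the $\limsup$) --- are routine.
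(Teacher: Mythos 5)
Your proposal is correct and follows essentially the same route as the paper's proof: a normal-families extraction, passing to the limit in the functional equation $\phi_{\alpha_n}\circ R_{\alpha_n}=f_{\alpha_n}\circ\phi_{\alpha_n}$, showing the limit maps $\rho\D$ into $\Delta(f_\alpha)$, and then invoking Schwarz (stated by you as monotonicity of the conformal radius, plus its equality case or the formal-series uniqueness when $\alpha\notin\Q$). The only cosmetic difference is that the paper reaches $\phi(\rho\D)\subset\Delta(f_\alpha)$ directly from the identity $f_\alpha^k\circ\phi=\phi\circ R_\alpha^k$ (so $\phi$ takes values in $K(f_\alpha)$) and then applies Schwarz to $\phi_\alpha^{-1}\circ\phi$ without ever needing $\phi$ to be injective, whereas you invoke Hurwitz to secure injectivity so as to speak of $\psi(\rho\D)$ as a rotation domain with well-defined conformal radius; both routes are valid and equivalent in substance.
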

\begin{proof}
If $\rho>0$ then the restrictions of $\phi_{\alpha_n}$ to $r_n\D$ with $r_n=\min(\rho,r(\alpha_n))\tend \rho$ take values in $\D$ thus form a normal family.
Consider any extracted limit $\phi:\rho\D\to \C$ of these restrictions. Since 
$\phi_{\alpha_n}(0)=0$ and $\phi_{\alpha_n}'(0)=1$ we have $\phi(0)=0$ and $\phi'(0)=1$.
Hence the limit is not constant and thus takes values in $\D$. Passing to the limit in $\phi_{\alpha_n}\circ R_{\alpha_n} = f_{\alpha_n}\circ\phi_{\alpha_n}$ we get that $\phi\circ R_\alpha = f_\alpha \circ \phi$ on $\rho\D$. 
It follows that $\phi$ takes values in $K(f_\alpha)$ and since it is open, in the interior of $K(f_\alpha)$ hence in $\Delta(f_\alpha)$. By Schwarz's inequality applied to $\phi_\alpha^{-1}\circ\phi$ we have $|\phi'(0)| \leq r(\alpha)/\rho$, i.e.\ $r(\alpha)\geq \rho$.
If $\rho\geq r(\alpha)$ then we are in the case of equality of
Schwarz's lemma, and hence $\phi_\alpha^{-1}\circ\phi$ is the identity on $\rho\D$.
Alternatively, if $\alpha\notin\Q$ then by the uniqueness argument before the statement of the present lemma, $\phi$ must be equal to $\phi_\alpha$ near $0$. By analytic continuation of equalities, they coincide on all of $\rho\D$.
\end{proof}

\begin{remark*}Note that the second claim sometimes fails if $\alpha\in\Q$ if we do not assume $\rho\geq r(\alpha)$: for instance to build a counterexample for $\alpha=0$ one may consider a vector field $dz/dt = \chi(z)$ which has a singularity at $0$ (i.e. $\chi(0)=0$) with eigenvalue $\chi'(0)=2\pi i$ but is not linear and let $f_t$ be the restriction to $\D$ of the time-$t$ map associated to this vector field. Then as $t\neq 0$ tends to $0$, the complement of the interior of $K(f_t)$  tends, in the sense of Hausdorff, to the complement of the interior of the set $K(\chi)$ where $K(\chi)$ denotes the set of points in $\D$ whose forward orbit by the vector field is defined for all times and never leaves $\D$. Moreover if $t$ is irrationnal, then $\Delta(f_t)$ is in fact independent of $t$ and equal to the component containing $0$ of the interior of $K(\chi)$. On the other hand $f_0 = \on{id}$ hence $K(f_0)=\D$.
\end{remark*}

Let us prove the first point in \Cref{prop:basic}:

\begin{lemma}[Upper semi-continuity of $r$]\label{lem:usc}
If $\alpha_n\tend \alpha$ then $r(\alpha) \geq\limsup r(\alpha_n)$.
\end{lemma}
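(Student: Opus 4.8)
The plan is to deduce upper semi-continuity of $r$ directly from \Cref{lem:li}, which has already done essentially all the work. Recall the statement to prove: if $\alpha_n\to\alpha$ then $r(\alpha)\geq\limsup r(\alpha_n)$. First I would reduce to a subsequence: pick a subsequence $(\alpha_{n_k})$ along which $r(\alpha_{n_k})$ converges to $L:=\limsup_n r(\alpha_n)$. If $L=0$ there is nothing to prove since $r(\alpha)\geq 0$ always, so assume $L>0$. Now along this subsequence we have $\rho:=\liminf_k r(\alpha_{n_k}) = L > 0$, and the first assertion of \Cref{lem:li} (the part that holds with no hypothesis beyond $\rho>0$, in particular no rationality restriction and no assumption $\rho\geq r(\alpha)$) gives exactly $r(\alpha)\geq\rho = L$. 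That is the desired inequality.

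The only subtlety is to make sure I am invoking the right portion of \Cref{lem:li}: its opening sentences assert "$r(\alpha)\geq\rho$" with $\rho=\liminf r(\alpha_n)$ for an arbitrary sequence $\alpha_n\to\alpha$, and the proof of that inequality (normal family argument, extract a limit $\phi$, pass to the limit in the conjugacy equation, apply Schwarz's inequality to $\phi_\alpha^{-1}\circ\phi$ to get $|\phi'(0)|\leq r(\alpha)/\rho$ hence $r(\alpha)\geq\rho$) uses nothing about whether $\alpha$ is rational and nothing about the relative size of $\rho$ and $r(\alpha)$. So applying it to the extracted subsequence is legitimate. I should also note that the case $L>0$ is the only one needing \Cref{lem:li}; this keeps the argument clean.

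I expect essentially no obstacle here — this lemma is a formal corollary of \Cref{lem:li}. The one thing to be careful about is the standard $\limsup$/subsequence bookkeeping: a single sequence $\alpha_n$ may have $r(\alpha_n)$ not converging, so one must first extract a subsequence realizing the $\limsup$ before $\liminf$ equals $\limsup$ along it and \Cref{lem:li} becomes applicable with $\rho = L$.

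\begin{proof}
Set $L:=\limsup_{n\to\infty} r(\alpha_n)$. If $L=0$ then $r(\alpha)\geq 0 = L$ and we are done. Assume $L>0$. Choose a subsequence $(\alpha_{n_k})_k$ such that $r(\alpha_{n_k})\to L$ as $k\to\infty$. Then $\alpha_{n_k}\to\alpha$ and $\liminf_{k\to\infty} r(\alpha_{n_k}) = L>0$. Applying the first assertion of \Cref{lem:li} to the sequence $(\alpha_{n_k})_k$ (with $\rho=L$), which only requires $\rho>0$ and makes no assumption on $\alpha$, we obtain $r(\alpha)\geq L = \limsup_{n\to\infty} r(\alpha_n)$.
\end{proof}
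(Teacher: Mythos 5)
Your proof is correct and is essentially identical to the paper's: extract a subsequence along which $r(\alpha_{n_k})$ converges to the $\limsup$, then invoke the first conclusion of \Cref{lem:li} on that subsequence. The only cosmetic difference is that you spell out the trivial $L=0$ case, which the paper leaves implicit (the stated first conclusion of \Cref{lem:li} already covers it, even though its proof only argues the $\rho>0$ case).
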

\begin{proof}
Let $\rho = \limsup r(\alpha_n)$. For a subsequence $a_{n[k]}$ we have $r(\alpha_{n[k]})\tend \rho$. The claim then follows from the first conclusion of \Cref{lem:li} applied to the subsequence.
\end{proof}

\subsection{A remark on continuum theory}\label{subsec:comb}
 (This section can be skipped as it is not necessary in the rest of the article.)

A \emph{continuum} is a non-empty, compact and connected metrizable topological space. 
In continuum theory, there is an object called the \emph{Lelek fan}.
It is a universal object in the sense that any continuum with some specific set of properties (see \cite{Cha,AO}) is homeomorphic to the Lelek fan.
A variant is the following, called \emph{straight one sided hairy arc} in \cite{AO}, that they abbreviate \emph{sosha}, but we prefer to call it here a \emph{comb}. The Lelek fan can be recovered from this continuum by contracting the base to a point.
\begin{definition}[The comb]\label{def:comb}
A \emph{straight one sided hairy arc} is the sub-graph
\[C = \setof{(x,y)}{0\leq y\leq f(x)}\]
of a function $f:[0,1]\to[0,+\infty)$
such that:
\begin{itemize}
\item $f$ is upper semi-continuous,
\item $f$ is weakly\footnote{See \Cref{prop:basic} for a definition.} lower semi-continuous on the left and on the right,
\item both $\setof{x\in[0,1]}{f(x)>0}$ and $f^{-1}(0)$ are dense in $[0,1]$,
\item $f(0)=0=f(1)$.
\end{itemize}
Its \emph{base} is the segment $\setof{(x,0)}{x\in[0,1]}$.
\end{definition}
\noindent The first condition is equivalent to $C$ being closed. We call it a comb or \emph{the} comb to emphasize that it also possesses a form of topological uniqueness (see \cite{AO}).

Under the condition of the above definition, it was proved in \cite{AO}, Proposition~2.4, that  the image by $f$ of an interval $[a,b]\subset[0,1]$ with $a<b$  is of the form $[0,M]$ for some $M>0$. In particular: though $f$ is highly discontinuous, it  satisfies the intermediate value property.

Also, $C$ is the closure of the graph of $f$ (Corollary 2.5 in \cite{AO}). The fact that the closure is contained in $C$ follows from $f$ being upper semi-continuous and non-negative. The fact that this closure contains $C$ means that for any $(x,y)$ with $0\leq y\leq f(x)$ there exists a sequence $x_n\to x$ such that $f(x_n)\to y$. In fact if $x\neq 0$ or $1$ then there is such a sequence satisfying $x_n<x$ and there is another satisfying $x_n>x$.

From the Lelek fan people have derived topological models for the Julia set of some exponential maps \cite{AO}, and conjecturally for the hedgehogs associated to non-linearizable fixed points of some polynomials. We will also see here a comb, though not as a subset of some dynamical plane, but as the subgraph of the function $\alpha\mapsto r(\alpha)$ in the special case of \Cref{sub:assopt}.

\subsection{Special case: assuming Brjuno's condition is optimal}\label{sub:assopt}

Assume here that we have a family for which we know that the Brjuno condition is optimal, in the sense that $r(\alpha)>0 \implies \alpha\in\cal B$ where $\cal B$ denotes the set of Brjuno numbers. The first family for which optimality has been known is the family of degree two polynomials, thanks to the work of Yoccoz, see \cite{Y}.

\begin{lemma}\label{lem:aa}
For all $\alpha\in I$ and all $y$ with $0\leq y< r(\alpha)$, the set $r^{-1}(y)$ (is non-empty and) accumulates $\alpha$ on the left and on the right. In other words there exists $\alpha_n<\alpha<\alpha'_n$ with $\alpha_n\to \alpha$ and $\alpha'_n\to \alpha$ and such that $r(\alpha_n)=y=r(\alpha'_n)$.
\end{lemma}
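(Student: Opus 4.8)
The plan is to combine upper semi-continuity of $r$ (\Cref{lem:usc}) with the weak lower semi-continuity on each side at Brjuno numbers (\Cref{prop:basic}, item~\ref{item:basic:3}) plus the intermediate value property, all exploited near $\alpha$. Since we are in the special case where $r(\beta)>0\implies\beta\in\cal B$, the set $\{\beta : r(\beta)>0\}$ is contained in $\cal B$, and in particular $r(\alpha)>0$ (the statement is vacuous when $r(\alpha)=0$, so assume $y<r(\alpha)$ means $r(\alpha)>0$, hence $\alpha\in\cal B$). I will argue for the left side; the right side is symmetric.

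First I would use item~\ref{item:basic:3} of \Cref{prop:basic} to produce a sequence $\alpha_k\nearrow\alpha$ with $r(\alpha_k)\to r(\alpha)$. Fix $y$ with $0\le y<r(\alpha)$. For $k$ large enough we have $r(\alpha_k)>y$. Now I want to find, in each interval $(\alpha_k,\alpha)$, a point where $r$ takes the value $y$ — but $r$ is far from continuous, so I cannot apply the classical intermediate value theorem directly. Instead I invoke the mechanism behind the comb picture: on any interval, the range of $r$ is an interval of the form $[0,M]$. Concretely, between a parameter $\beta$ with $r(\beta)>y$ and a parameter where $r$ vanishes, some parameter realizes the value $y$. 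So the key step is: find, arbitrarily close to $\alpha$ on the left, a parameter $\gamma$ with $r(\gamma)=0$.

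This is exactly where density comes in. By assumption the family is non-degenerate, meaning the non-linearizable parameters are dense in $I$; since a non-linearizable map has $\Delta(f_\gamma)=\emptyset$ and hence $r(\gamma)=0$, the set $r^{-1}(0)$ is dense. Therefore in every left-neighborhood of $\alpha$ I can pick a $\gamma$ with $\gamma<\alpha$, $\gamma$ as close to $\alpha$ as I like, and $r(\gamma)=0<y<r(\alpha_k)$ for suitable $k$ with $\alpha_k<\gamma<\alpha$ or $\gamma<\alpha_k<\alpha$ — in either case I have a sub-interval $J\subset(\alpha_k\wedge\gamma,\ \alpha_k\vee\gamma)$ (so $J$ shrinks toward $\alpha$) on whose endpoints $r$ takes values straddling $y$: at one endpoint $r=0<y$, at the other $r>y$. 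Now I apply the intermediate value property of $r$ on $J$: by the "range is $[0,M]$" statement (which is the content of \cite{AO} Proposition~2.4, and which applies here because \Cref{prop:basic} says $r$ satisfies exactly the hypotheses of the comb), and since $y<M=\max_{\beta\in\bar J} r(\beta)$... — actually cleaner: the range of $r|_J$ is $[0,M_J]$ with $M_J\ge r(\alpha_k)>y$, so $y$ is attained at some $\beta_J\in J$. Letting $J$ shrink toward $\alpha$ yields the desired sequence $\alpha_n<\alpha$ with $r(\alpha_n)=y$ and $\alpha_n\to\alpha$.

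The main obstacle is making the intermediate value step rigorous without circularly relying on the comb structure. Two honest options: either cite \cite{AO} Proposition~2.4 directly once we have verified via \Cref{prop:basic} (and density of $r^{-1}(0)$ and of $r^{-1}((0,\infty))$, the latter because Brjuno numbers of bounded type are dense and have $r>0$) that $\alpha\mapsto r(\alpha)$ is, after an affine reparameterization of $I$ onto $[0,1]$, a comb function; or give a short direct argument: set $\beta^*=\sup\{\beta\in J : r(\beta)\ge y\}$ (nonempty since one endpoint qualifies, and bounded), then use upper semi-continuity to get $r(\beta^*)\ge y$, and weak lower semi-continuity from the right at $\beta^*$ (available because $r(\beta^*)>0$ forces $\beta^*\in\cal B$) to get points just to the right of $\beta^*$ in $J$ with $r$-value close to $r(\beta^*)\ge y$; combined with the definition of $\beta^*$ these right-neighbors have $r<y$ in the limit, forcing $r(\beta^*)=y$ by squeezing. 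I would present the direct argument, as it is self-contained and reuses only \Cref{lem:usc} and \Cref{prop:basic}, and I would note in a remark that this is the analytic heart of why the subgraph of $r$ is a comb.
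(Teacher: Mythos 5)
Your direct argument is essentially the paper's proof. The paper takes a parameter $b$ close to $\alpha$ with $r(b)=0$, sets $c=\inf\{x\in[b,\alpha]:r(x)\ge y\}$, uses upper semi-continuity to get $r(c)\ge y$ (hence $c\in\cal B$ by optimality), and then uses weak lower semi-continuity on the left at $c$ to rule out $r(c)>y$; your ``set $\beta^*=\sup\{\beta\in J:r(\beta)\ge y\}$ and squeeze'' is the same mechanism, mirrored, and with the same two lemmas doing the work.

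Two small remarks. First, you insert an unnecessary preliminary step: you invoke item~\ref{item:basic:3} of \Cref{prop:basic} at $\alpha$ to manufacture $\alpha_k<\alpha$ with $r(\alpha_k)>y$, and then run the inf/sup argument on an interval with endpoints $\alpha_k$ and $\gamma$. The paper simply uses $\alpha$ itself as the endpoint with $r>y$ (the set $K$ is non-empty because $\alpha\in K$), which is cleaner and avoids the extra application of weak lower semi-continuity; the final observation $r(c)=y\ne r(\alpha)$ guarantees $c\ne\alpha$. Second, you attribute the density of $r^{-1}(0)$ to non-degeneracy. This lemma lives in the ``Brjuno's condition is optimal'' subsection, and the paper derives density of $r^{-1}(0)$ from the density of $\R\setminus\cal B$ together with optimality (every non-Brjuno parameter has $r=0$). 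Your conclusion is still correct — optimality implies non-degeneracy and also directly gives the density you need — but the citation should be to the standing optimality hypothesis of that subsection rather than to \Cref{def:dege}.
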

\begin{proof} (from \cite{A})
Since $\R\setminus\cal B$ is dense, there is a dense set on which $r=0$. In particular the case $y=0$ is trivial. Assume $y>0$.
Arbitrarily close to $\alpha$, there are $b\in I$ such that $r(b)=0$. Assume $b<\alpha$.
Consider then $K=\setof{x\in[b,\alpha]}{r(x)\geq y}$, which is non-empty because $\alpha\in K$, and let $c=\inf K$. By upper semi-continuity $r(c)\geq y$.
In particular $r(c)\neq 0$ hence $c\neq b$ and $c\in \cal B$ by optimality assumption.
If we had $r(c)>y$ then by weak lower semi-continuity on the left at Brjuno numbers (\Cref{prop:basic}), there would be some $c'\in(b,c)$ with $r(c')>y$, contradicting the definition of $c$. The same holds if $b>\alpha$ using weak lower semi-continuity on the right. Finally $r(c)=y \neq r(\alpha)$ ensures that $c\neq \alpha$.
\end{proof}

\begin{remark*}
Consider an interval $[u,v]\subset I$ with $u<v$ and $r(u)=0=r(v)$ (there are plenty by hypothesis), and let $C$ be the subgraph of $r$ restricted to $[u,v]$. Then $C$ is a comb as per \Cref{def:comb}: we imposed $r(u)=0=r(v)$ and all remaining conditions are satisfied according to \Cref{prop:basic}. This implies the lemma above, by the discussion in \Cref{subsec:comb}, and in fact the proof of the intermediate value property for a general comb boils down to the same arguments as the proof of \Cref{lem:aa}.
\end{remark*}

Since we assumed optimality of Brjuno's condition, the values $\alpha_n$ in \Cref{lem:aa} belong to $\cal B$. By \Cref{compl:bdd} we get:

\begin{corollary}\label{cor:bdd}
For all $\alpha\in I$ and all $y$ with $0\leq y\leq r(\alpha)$, there exists $\alpha_n<\alpha<\alpha'_n$ with $\alpha_n\to \alpha$ and $\alpha'_n\to \alpha$ and such that: $\alpha_n$ and $\alpha'_n$ are bounded type irrationals and $r(\alpha_n)$ and $r(\alpha'_n)$ both tend to $y$.
\end{corollary}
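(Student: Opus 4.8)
The plan is to deduce \Cref{cor:bdd} by combining \Cref{lem:aa} with \Cref{compl:bdd}, essentially running the argument of \Cref{lem:aa} but keeping track of which parameters can be taken of bounded type. First I would dispose of the endpoint case $y=r(\alpha)$, which is not covered by \Cref{lem:aa} (there $y<r(\alpha)$): if $r(\alpha)=0$ there is nothing to prove since the zero set of $r$ is dense and we may perturb into the (dense) set of non-Brjuno numbers, while if $r(\alpha)>0$ then $\alpha\in\cal B$ by the optimality assumption, so \Cref{compl:bdd} directly furnishes bounded-type sequences $\alpha_n<\alpha<\alpha'_n$ converging to $\alpha$ with $r(\alpha_n),r(\alpha'_n)\to r(\alpha)=y$. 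So from now on assume $0\le y<r(\alpha)$.

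Next I would invoke \Cref{lem:aa} to get, for each side, a point $c=c(y)$ arbitrarily close to $\alpha$ (on the chosen side) with $r(c)=y$. The point of the proof of \Cref{lem:aa} is that such a $c$ can be taken to be a Brjuno number: indeed in that proof $c$ is defined as an infimum of a set on which $r\ge y>0$, so $r(c)\ge y>0$ forces $c\in\cal B$ by optimality; and the weak lower semicontinuity at $c$ then forces $r(c)=y$ exactly. Thus I may assume we have produced a sequence $c_k\to\alpha$, say with $c_k<\alpha$ for the left side, such that each $c_k\in\cal B$ and $r(c_k)=y$ (and symmetrically $c'_k>\alpha$, $c'_k\in\cal B$, $r(c'_k)=y$ for the right side).

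Now I would apply \Cref{compl:bdd} at each $c_k$. Since $c_k\in\cal B$, that complement yields bounded-type numbers $\beta$ arbitrarily close to $c_k$ — and on each side of $c_k$ — with $r(\beta)$ arbitrarily close to $r(c_k)=y$. Concretely, for each $k$ pick $\beta_k$ of bounded type with $c_k-1/k<\beta_k<c_k$ (so in particular $\beta_k<\alpha$) and $|r(\beta_k)-y|<1/k$; this is possible by \Cref{compl:bdd} applied at $c_k$ using the left-approximating sequence. Then $\beta_k\to\alpha$, $\beta_k<\alpha$, each $\beta_k$ is a bounded type irrational, and $r(\beta_k)\to y$. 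Relabel $\alpha_n=\beta_n$. The symmetric construction on the right of $\alpha$, using $c'_k$ and the right-approximating sequence from \Cref{compl:bdd}, produces $\alpha'_n>\alpha$ with the analogous properties. This establishes the corollary.

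The only mild subtlety — and the step I would be most careful about — is the bookkeeping of \emph{sides}: \Cref{compl:bdd} gives approximating sequences on both sides of a Brjuno number, so when approximating $c_k$ (which lies to the left of $\alpha$) I must use the sequence approaching $c_k$ \emph{from the left} to be sure the resulting bounded-type parameter still lies to the left of $\alpha$; since $c_k<\alpha$ strictly, any sufficiently good one-sided approximant works, so no real difficulty arises. Everything else is a two-step diagonal passage (\Cref{lem:aa} then \Cref{compl:bdd}) combined with upper semicontinuity of $r$ (\Cref{lem:usc}) to pin the limits of $r(\alpha_n),r(\alpha'_n)$ to exactly $y$ rather than merely $\le y$ or $\ge y$; in fact since we already arranged $r(\beta_k)\to y$ directly, no extra appeal is needed.
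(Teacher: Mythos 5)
Your overall approach --- diagonalizing \Cref{lem:aa} against \Cref{compl:bdd} --- is exactly what the paper does (the paper compresses it into the single sentence preceding the corollary), and for $0<y\le r(\alpha)$ your argument is correct: you correctly observe that optimality forces the intermediate value $c$ produced in the proof of \Cref{lem:aa} to be a Brjuno number, that \Cref{compl:bdd} can then be applied at $c$, and that the side bookkeeping is unproblematic because $c$ lies strictly on one side of $\alpha$.

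The case $y=0$, however, is mishandled. When $r(\alpha)=0$ you assert there is ``nothing to prove'' and that one may ``perturb into the (dense) set of non-Brjuno numbers,'' but the corollary demands \emph{bounded type} irrationals $\alpha_n<\alpha<\alpha'_n$ with $r(\alpha_n),r(\alpha'_n)\to 0$, and non-Brjuno numbers are precisely \emph{not} of bounded type; so this does not discharge the claim. Similarly, in your main case you announce the hypothesis $0\le y<r(\alpha)$ but then crucially use $r(c)\ge y>0$ to conclude $c\in\mathcal B$, so the sub-case $y=0<r(\alpha)$ is not actually covered either. The $y=0$ case does require a short separate argument, e.g.: pick non-Brjuno $\beta_k\to\alpha$ with $\beta_k<\alpha$; by optimality $r(\beta_k)=0$, so upper semi-continuity (\Cref{lem:usc}) gives a small interval around $\beta_k$, contained in $(-\infty,\alpha)$, on which $r<1/k$; choose a bounded type irrational $\alpha_k$ in that interval; then $\alpha_k<\alpha$, $\alpha_k\to\alpha$, $r(\alpha_k)\to 0$, and symmetrically on the right. (The paper's own one-line proof also elides $y=0$, but your explicit ``there is nothing to prove'' is the opposite of what is needed, so it is worth correcting.)
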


\noindent In other words we gain information on the arithmetic type of $\alpha_n$ at the cost of weakening ``$r(\alpha_n)=y$'' into ``$r(\alpha_n)\to y$''. Note also that we gain the ability to reach $y=r(\alpha)$.

\subsection{Smooth Siegel disks}

Recall the standing assumption that the family depends continuously on $\alpha$. In \cite{A, ABC} is shown how to get smooth Siegel disks from \Cref{lem:aa}, which assumes Brjuno's condition is optimal.
We adapt here the proof so that it does not use optimality directly but only depends on the following condition:\footnote{Other conditions are sufficient to apply the methods of \cite{BC}. For instance we can replace the assumption $\alpha\in \cal B $ by $\alpha$ having bounded type. Also, bounded type numbers in the hypothesis and conclusion can be replaced by Herman numbers.}

\begin{condition}\label{cond:bdd}
For every $\alpha\in\cal B$ and every $\rho\in\R$ with $0<\rho<r(\alpha)$, there exists a sequence of bounded type numbers $\alpha_n \tend \alpha$ such that $r(\alpha_n)\tend \rho$.
\end{condition}

\smallskip

For a family on which the Brjuno condition is optimal, \Cref{cor:bdd} implies that \Cref{cond:bdd} holds. Better: it provides a sequence on \emph{each} side, though the construction below does not need it.

Recall that the point of this article is to get rid of the hypothesis that the Brjuno condition is optimal: we  will prove in \Cref{cor:follow} that \Cref{cond:bdd} holds whenever the family is Lipschitz-continuous with respect to the parameter, and non-degenerate as per \Cref{def:dege}.

\medskip

We now begin the construction of a smooth Siegel disk assuming \Cref{cond:bdd}. Recall that $\phi_\alpha$ denotes the unique conformal bijection from $r(\alpha)\D \to \Delta(f_\alpha)$ such that $\phi_\alpha(z) = z +\cal O(z^2)$ and that $\phi_\alpha$ linearizes $f_\alpha$.

\medskip
\noindent{\it Construction of a sequence $\theta_n$.}

Start from any $\theta_0\in\cal B$, so that $r(\theta_0)>0$ and also $\theta_0\notin\Q$. Choose some target radius $\rho \in(0,r(\theta_0))$. Choose also a strictly decreasing sequence $\rho_n$ such that $\rho_0=r(\theta_0)$ and $\rho_n \tend \rho$.

Consider then a sequence $\alpha_k\to \theta_0$, provided by \Cref{cor:bdd}, such that $r(\alpha_k) \tend \rho_1$ and $\alpha_k\in\cal B$.
From the properties of linearizing maps (\Cref{lem:li}) and $\theta_0\notin\Q$ it follows that $\phi_{\alpha_k} \to \phi_{\theta_0}$ uniformly on every compact subset of $\rho_1\D$. Since these are holomorphic maps, the same is true for their derivatives of all orders. We let $\theta_1 =\alpha_k$ for a choice of $k$ such that the restriction of $\phi_{\theta_1}-\phi_{\theta_0}$ to the closure of $\rho\D$ is less than $1/2$ (for the sup norm) and such that its first derivative is less than $1/4$.
Since $\alpha_k\in\cal B$ it follows that $\theta_1\notin\Q$.

Then we choose some open interval $I_1$ of length $\leq 1/2$, containing $\theta_1$, and such that $\alpha\in \ov I_1$ $\implies$ $r(\alpha)\leq \rho_0$ (upper semi-continuity of $r$ at $\theta_1$).
We also ask that the the closure of $I_1$ lies at positive distance from $\Z$, which is possible since $\theta_1\notin\Z$.

We then continue the inductive construction: given $n\geq 2$, once $\theta_{n-1}\in \cal B$ and $I_{n-1}$ with $\theta_{n-1} \in I_{n-1}$ have been constructed we choose $\theta_n$ of the form $\alpha_k$ where $\alpha_k\in \cal B$ is a sequence provided by \Cref{cor:bdd} tending to $\theta_{n-1}$ such that $r(\alpha_k) \tend \rho_n$.
The index $k$ is chosen big enough so that: the restriction to the closure of $\rho \D$ of $\phi_{\theta_n}-\phi_{\theta_{n-1}}$ is less than $1/2^n$, its first derivative less than $1/2^{n+1}$ and so on up to its $n$-th derivative less than $1/2^{n+n}$.
It is also chosen big enough so that $\theta_n$ belongs to the interior of $I_{n-1}$, a condition that we did not have for $n=1$.

Then we choose an open sub-interval $I_n\subset I_{n-1}$ of lenght $\leq 1/2^n$, containing $\theta_n$, such that $\alpha\in\ov I_n$ $\implies$ $r(\alpha)\leq \rho_n$ and such that the closure of $I_n$ lies at positive distance from $\frac{1}{n}\Z$.
And so on\ldots

\medskip
\noindent{\it Properties of the limit $\theta$.}

The intersection of the closures of the $I_n$ is a singleton and $\theta_n$ tends to this value.
Since $r(\theta_n)=\rho_n\tend \rho$, upper semi-continuity of $r$ implies $r(\theta)\geq \rho$.
By the defining properties of $I_n$, which contains $\theta$, we get $r(\theta)\leq \rho_n$ for all $n$ so $r(\theta)\leq \rho$.
Hence $r(\theta)=\rho$.
Also $\theta$ belongs to the closure of $I_n$ for all $n$, so the distance from $\theta$ to $\frac{1}{n}\Z$ is positive for all $n>0$ hence $\theta\notin\Q$.
The conditions on $\phi_{\theta_n}-\phi_{\theta_{n-1}}$ implies the uniform convergence on the closure of $\rho\D$ of the derivatives of all orders of $\phi_{\theta_n}$.
Since $\theta\notin\Q$, \Cref{lem:li} implies that $\phi_{\theta_n}\to \phi_\theta$ uniformly on compact subsets of $\rho\D$.\footnote{However even if we had $\theta\in\Q$, since $r(\theta_n)\to r(\theta)$, we would still have $\phi_{\theta_n}\to \phi_\theta$.} It follows that $\phi_{\theta}$ has a $C^{\infty}$ extension $\tilde\phi$ to the closure of $\rho\D$.
The image of the circle $\rho\partial\D$ by this extension is the boundary of $\Delta(f_\theta)$.
By a straightforward modification of the construction above, we can ensure that the curve is compactly contained in $\D$. Then $\tilde\phi \circ R_{\theta} = f_\theta\circ \tilde \phi$ also holds on the boundary circle.
Hence the derivative of $\tilde\phi$ on cannot vanish on this circle, for if it did, it would vanish on a dense subset  using the equation above, hence everywhere on the circle, whence $\tilde\phi$ would be constant by standard properties of holomorphic functions. The map $\tilde \phi$ is also injective on the boundary circle (see \cite{M}, Lemma~18.7 page 193; it is stated for rational maps but is valid as soon as the rotation number is irrational and the Siegel disk compactly contained in the domain of the map).

Hence $\partial \Delta(f_\theta)$ is a $C^\infty$ Jordan curve compactly contained in $\D$.

\begin{remark*} An interesting feature of this construction is that we were able to prescribe the conformal radius of the Siegel disk.\footnote{By a linear change of variable $z\mapsto \lambda(\alpha) z$ with $\lambda$ continuous, we can thus prescribe the conformal radius to coincide with a continous function of $\alpha$.}
\end{remark*}

\subsection{Other regularity classes for the boundary}\label{sub:other}

The construction in \cite{BC} of boundaries that are $C^n$ but not $C^{n+1}$, and of other examples (see \Cref{app:grl}), is a refinement of the previous method. In the process we loose the ability to exactly prescribe the conformal radius.

To apply the method of \cite{BC} and thus get \Cref{thm:main}, it is enough to have a continuous family of maps $f_\alpha$ such that \Cref{cond:bdd} holds.
In \cite{BC} there are two supplementary condition, but we can remove them:
\begin{itemize}
\item The maps $f_{\alpha}$ must be injective. But if the given family $f_\alpha$ contains non-univalent maps, we first restrict to a sub-interval $I$ and restrict $f$ to a small enough disk $\eps \D$. The Siegel disk $\Delta'$ that it produces for the restriction to $\eps\D$ of $f$ is compactly contained in $\eps\D$ and thus $\Delta(f) = \Delta'$ by the end of \Cref{cor:subdisk}.
\item The family must depend analytically on $\alpha$. But it turns out that the proof given in \cite{BC} only uses continuity of the family and the fact that \Cref{cond:bdd} holds.
\end{itemize}

\begin{remark*}
The proof that \Cref{cond:bdd} is enough is a bit elaborate so we refer the reader to \cite{BC}.  Let us just mention that in the construction, to get obstructions to regularity we use as intermediate steps the existence of Siegel disks whose boundaries oscillate a lot. 
For this we use a theorem of Herman \cite{H}: if $\alpha$ is a Herman number (this includes all bounded type numbers),\footnote{Notably, Yoccoz completely determined the set of Herman numbers $\alpha$ in terms of simple manipulations of $\alpha$, see \cite{Y2}.} and if $f$ is univalent then $\Delta(f_\alpha)$ cannot be compactly contained in $\D$. Now if we have a sequence $\alpha_n\to\alpha$ of Herman numbers such that $r(\alpha_n)\to \rho \in (0,r(\alpha))$ then the sets $\partial \Delta(f_{\alpha_n})$ have a point in $\partial \D$ but also points close to the $f_\alpha$-invariant curve $\phi_\alpha(\rho\partial\D) \subset \Delta(\alpha)$. See \cite{BC} for the rest of the argument.
As an alternative to Herman's theorem, we can use the following consequence of \cite{GS} (whose methods are quite different from \cite{H}): If $\alpha$ has bounded type then $\partial \Delta$ either meets $\partial \D$ or contains a critical point of $f$. Note that it is known only for bounded type rotation numbers.
\end{remark*}

\subsection{General case}

Here we prove that \Cref{cond:bdd} holds for families that are non-degenerate (in the sense of \Cref{def:dege}) and for which the dependence on $\alpha$ is Lipschitz continuous. This yields \Cref{thm:main}.

This is based on the following perturbation lemma, to the proof of which we devote the whole of \Cref{sec:proof}.
\begin{lemma}[Main lemma: perturbation of a rotation]\label{lem:main}
Let $\alpha\in \R$ and let $[a_0;a_1,\ldots]$ be its continued fraction expansion.
If $\alpha\in \Q$ then it has two such expansions\footnote{See the end of \Cref{sub:remind}.} and both are finite : choose one, $[a_0;a_1,\ldots,a_k]$.
\begin{enumerate}
\item If $\alpha\notin\Q$ let $\alpha_n = [a_0;a_1,\ldots,a_n,1+a_{n+1},1+\sqrt{2}]$.
\item If $\alpha\in\Q$ let $\alpha_n = [a_0;a_1,\ldots,a_k,n+1+\sqrt{2}]$.
\end{enumerate}
Assume that $f_n$ are holomorphic functions defined on $\D$ with $f_n(z)=e^{2\pi i\alpha_n} z +\cal O(z^2)$ and that
$f_n$ tends to $R_\alpha$ in a Lipschitz way with respect to $\alpha_n-\alpha$, i.e.:
\[|f_n (z)- R_{\alpha}(z)|\leq K|\alpha_n-\alpha|\]
for some $K\geq 1$.\footnote{We necessarily have $K\geq 1$ by Schwarz's lemma comparing derivatives at the origin. To allow for smaller values of $K$ we would compare $f_n$ to $R_{\alpha_n}$ instead of $R_{\alpha}$. However we are not interested in small values of $K$ in this article.}
Then
\begin{enumerate}
\item If $\alpha\notin\Q$ then \[\liminf r(f_n) \geq 1.\]
\item If $\alpha = p/q$ in irreducible form, then
\[\liminf r(f_n) \geq \exp(-C(K,q)).\]
\end{enumerate}
Here $C(K,q)>0$ is independent of $\alpha$, of the sequence $f_n$ and of $\alpha_n$ and satisfies the following: for all integers $q\geq 1$, $K\mapsto C(K,q)$ is a continuous non-decreasing function of $K\geq 1$; for all fixed $K\geq 1$ we have $C(K,q)\tend 0$ as $q\to+\infty$.
\end{lemma}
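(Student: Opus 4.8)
The plan is to reduce the statement to a controlled study of Yoccoz's sector renormalization, applied to the sequence of maps $f_n$ whose rotation numbers $\alpha_n$ are tailored so that, after a fixed number of renormalization steps, one reaches a map whose rotation number has large (indeed, explicitly bounded-type) partial quotients and which is a small perturbation of the corresponding rigid rotation. First I would recall the combinatorics: writing $\alpha = [a_0;a_1,\ldots]$ (or $[a_0;a_1,\ldots,a_k]$ in the rational case), the choice $\alpha_n = [a_0;a_1,\ldots,a_n,1+a_{n+1},1+\sqrt2]$ (resp. $[a_0;a_1,\ldots,a_k,n+1+\sqrt2]$) is designed so that the first $n$ (resp. $k$) steps of continued-fraction renormalization of $\alpha_n$ match those of $\alpha$, and the "tail" that remains after those steps is a number of the form $[\,\text{large integer}\,;1+\sqrt2,1+\sqrt2,\ldots]$ or $[\,b;\,\text{tail of }\alpha\text{ shifted}\,]$, i.e. a bounded-type number with controlled constant. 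The point of the $\sqrt2$ is exactly that $1+\sqrt2 = [2;2,2,\ldots]$ is the simplest genuinely irrational bounded-type number, so all the tails we produce have Diophantine constant bounded independently of $n$.

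The key analytic input is Yoccoz's renormalization estimate: if $g$ is holomorphic on a disk, fixes $0$ with multiplier $e^{2\pi i\beta}$, and is a sufficiently small perturbation of $R_\beta$ (quantitatively, $|g(z)-R_\beta(z)|\leq \epsilon$ on $\D$ for $\epsilon$ small depending on the first partial quotient of $\beta$), then the sector-renormalized map $\mathcal{R}g$ is defined on a definite disk, has rotation number given by the Gauss map applied to $\beta$, and one controls both its distance to the corresponding rotation and the ratio of conformal radii of the relevant domains. Concretely I would invoke the chain of inequalities relating $r(f_n)$ to $r(\mathcal{R}^{j}f_n)$ through the renormalization "loss factors" — each step contributes a multiplicative factor that is close to $1$ when the relevant perturbation is small, and the total telescoping product over the first $n$ (resp. $k$) steps stays bounded below. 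In the irrational case, the perturbation $|f_n - R_\alpha|\leq K|\alpha_n-\alpha|$ goes to $0$ as $n\to\infty$ because $|\alpha_n-\alpha|\to 0$ (the two continued fractions agree to order $n$), so the loss factors tend to $1$ and after renormalizing down to the bounded-type tail — whose Siegel disk has conformal radius bounded below by a universal constant via Herman/Yoccoz — one gets $\liminf r(f_n)\geq 1$. In the rational case $\alpha = p/q$ there are only finitely many ($k$, with $q$ controlling the size of the partial quotients $a_1,\ldots,a_k$ and hence $k$ and the loss factors) renormalization steps to perform before hitting the tail $n+1+\sqrt2$; the perturbation $|f_n - R_{p/q}|\leq K|\alpha_n - p/q|$ need not go to $0$, but it stays bounded by $K$ times a quantity bounded in terms of $q$, so the finitely many loss factors are bounded below by a constant $\exp(-C(K,q))$ depending only on $K$ and $q$, monotone and continuous in $K$, and tending to $0$ as $q\to\infty$ because the number of steps and the sizes of the intermediate partial quotients grow with $q$.

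The main obstacle, and the part demanding genuine care, is making the renormalization estimates uniform and explicit enough to produce the claimed form of $C(K,q)$: one must ensure that (i) the perturbation size required for $\mathcal{R}$ to be defined and estimable at step $j$ is met along the whole orbit — this is where the first partial quotients being exactly $a_1,\ldots,a_k$ (and then $2$ forever) matters, since the admissible perturbation window shrinks with the partial quotient; (ii) the loss factor at each step is bounded below by something like $\exp(-c\cdot a_j \cdot \text{(perturbation)})$ or an analogous expression, so the telescoping product over $j\leq k$ is $\geq \exp(-C(K,q))$ with $C(K,q)$ expressible as a sum controlled by $\sum_{j\le k} a_j$, itself bounded in terms of $q$; and (iii) the "seed" estimate at the bottom — that a bounded-type number of constant independent of $n$ has Siegel disk of conformal radius bounded below by a universal constant, uniformly under small perturbation — is correctly quoted (Herman–Yoccoz, or the quantitative Brjuno/Yoccoz bound $r\geq \exp(-C\,\Phi(\beta))$ with $\Phi$ the Yoccoz function, which is bounded for bounded-type $\beta$). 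Assembling these with the correct quantifiers, and checking that $C(K,q)$ is independent of $\alpha$ and of the particular sequence $f_n$ (it depends on $\alpha$ only through $q$), is the crux; the rest is bookkeeping on continued fractions and normal-family compactness to pass from the renormalized estimate back to $\liminf r(f_n)$.
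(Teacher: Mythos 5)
Your proposal correctly identifies the overall toolbox — Yoccoz sector renormalization, the role of $1+\sqrt2$ as a fixed bounded-type tail, the idea that one must control "loss factors" accumulated by renormalization — but the argument as structured has a genuine gap, and it is precisely the gap the paper's proof is organized to avoid.

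You propose to \emph{iterate} order-$1$ renormalization, writing $r(f_n)$ in terms of $r(\mathcal{R}^j f_n)$ and telescoping $n$ (resp.\ $k$) loss factors, and you conclude in the irrational case that "the loss factors tend to $1$ \ldots so one gets $\liminf r(f_n)\geq 1$." This does not follow: a product of $n$ factors each tending to $1$ need not tend to $1$ when $n\to\infty$ — one needs the \emph{sum} of the losses to tend to $0$, and your sketch offers no control of that sum. Worse, the quantities that would control it are exactly what degrades under iteration. At each renormalization step, to recover a Lipschitz bound one must trade off the Schwarz--Pick cost of estimating $F'$ (restriction by $\epsilon$, cost $\sim 1/\epsilon$) and the universal geometric loss $D>1$ in the conjugacy (this is the content of \Cref{lem:nK}); iterating $n$ times would replace $K$ by something like $(D/2\epsilon)^n K$, and simultaneously the relative rotation-number gap $|\alpha_n-\alpha|/|\beta_n|$ inflates at each step by $\sim q_j q_{j+1}$, so after $n$ steps the "perturbation" seen by the renormalization operator is no longer small. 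So the telescoping product you describe is not bounded below by any $\exp(-C(K,q))$ by this route, and in the irrational case it does not tend to $1$.

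The paper sidesteps this with a structural trick stated explicitly in the "Foreword" of \Cref{sub:direct}: because $f_n$ is close to a \emph{rotation}, one does not need to iterate renormalization. Instead one performs a \emph{single} direct renormalization of high order $k+1$, using $H=T^{-p_k}\circ F^{q_k}$ and $J=T^{-p_{k-1}}\circ F^{q_{k-1}}$ — i.e.\ the high iterates themselves play the role of the pair of maps, so the Lipschitz constant degrades only once (\Cref{lem:nK}), not $n$ times. For $\alpha\in\Q$ one direct renormalization at order $k+1$ reduces to the case $\alpha\equiv 0$, gaining the factor $|\beta|=1/q$ via \Cref{lem:improve}; for $\alpha\notin\Q$ the paper first gets a weak uniform bound by one direct renormalization at order $n+1$ (landing on rotation number $\pm\sqrt2$), then improves it by inserting a single prior direct renormalization at a fixed but arbitrarily high order $k$, letting $k\to\infty$ and then $\epsilon\to0$. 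Your proposal does not contain this idea of one-step high-order renormalization, and without it the "bookkeeping" you defer is not bookkeeping but the actual obstruction.
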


\begin{remark}
If $\alpha\in\cal B$ then \Cref{lem:main} is already known: it follows from \cite{BC} or \cite{R}. So the novelty is for non-Brjuno numbers and rational numbers.
\end{remark}

In \Cref{sec:proof} we prove that the following value of $C(K,q)$ works: 
\begin{equation}\label{eq:C}
C(K,q) = \frac{\log q}{q} + \frac{\log K}{q} +\frac{c_1}{q}
\end{equation}
for some $c_1>0$. This estimate may be non optimal.

Note that $\alpha_n\to\alpha$. If $\alpha\notin\Q$ then $\alpha_{2n}<\alpha<\alpha_{2n+1}$.
If $\alpha\in\Q$ then $\alpha_n$ is on one side of $\alpha$ or the other depending on which of the two continued fraction was chosen.

Of course the choice of $1+\sqrt2$ is somewhat arbitraty and many other variants hold.
Recall that $1+\sqrt2 = [2; 2,2,2,\ldots]$, so it is a close relative to the golden mean $\frac{1+\sqrt5}{2} = [1;1,1,1,\ldots]$, that we can use instead. Note that the class of $\alpha_n \bmod \Z$ only depends on the class of $\alpha \bmod \Z$ (and on $n$).

The fact that we do not get $\liminf r(f_n) = 1$ when $\alpha\in\Q$ is not just a limitation of our method. Indeed, as in the remark following \Cref{lem:li}, consider a vector field $\dot z = \chi(z)$ defined in a neighborhood of the closed unit disk and with $\chi(z)=2\pi i z + \cal O(z^2)$ and let $h_t$ be the associated time-$t$ map. If the vector field is invariant by the rotation $R_{1/q}$ we may set $f_t=R_{p/q}\circ h_{t-p/q}$. Then the family satisfies $\sup_{\D}|f_t-R_{p/q}| \sim_{t\to p/q} K |t-p/q| $ with $K=\sup_\D |\chi|$. For $t$ irrational, its Siegel disk is independent of $t$ and coincides with the maximal linearization domain for $\chi$, which is not the whole unit disk, except if $\chi(z) = 2\pi i z$ for all $z$.


\medskip

As a consequence of the main lemma we now prove:

\begin{lemma}[Perturbation lemma for Lipschitz families]\label{lem:pert}
Let $I$ be a non-empty interval and $(f_\alpha)_{\alpha\in I}$ a family of functions $\D\to\C$ with expansion 
$f_\alpha(z)=e^{2\pi i\alpha} z + \cal O(z^2)$ at $0$. Assume that the family is $K$-Lipschitz for some $K\geq 1$, i.e.\ $\forall \alpha,\beta \in I$ and $\forall z\in\D$,
\[|f_\alpha(z)-f_\beta(z)|\leq K |\alpha-\beta|.\]
Then for all $\alpha\in I$, if we write $\alpha_n$ an associated sequence like in \Cref{lem:main}, we have
\begin{enumerate}
\item If $\alpha\notin\Q$ then \[\liminf r(f_{\alpha_n}) \geq r(f_\alpha).\]
\item If $\alpha = p/q$ in irreducible form, then
\[\liminf r(f_{\alpha_n}) \geq r(f_\alpha)/\exp(C'(K,q)).\]
\end{enumerate}
Similarly to the previous lemma, $C'(K,q)$ is independent of the family $(f_\alpha)$, for each $q$ the function $K\mapsto C'(K,q)$ is continuous, non-decreasing and for each $K\geq 1$, $C'(K,q)\tend 0$ as $q\to+\infty$.
\end{lemma}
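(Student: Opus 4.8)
The plan is to transport the perturbation $f_{\alpha_n}$ into the linearising coordinate of $f_\alpha$ and then invoke \Cref{lem:main}. If $r(f_\alpha)=0$ both inequalities are vacuous, so I would assume $r:=r(f_\alpha)>0$ (note $r\le 1$ by Schwarz's lemma). Let $\phi_\alpha\colon r\D\to\Delta(f_\alpha)$ be the linearising map of \Cref{def:phi}, so $\phi_\alpha(0)=0$, $\phi'_\alpha(0)=1$ and $\phi_\alpha^{-1}\circ f_\alpha\circ\phi_\alpha=R_\alpha$ on $r\D$. Since $|f_{\alpha_n}-f_\alpha|\le K|\alpha_n-\alpha|\to 0$ uniformly on $\D$, and $f_\alpha$ preserves each compact $\phi_\alpha(\ov{s\D})\Subset\Delta(f_\alpha)$ with $s<r$, the maps $H_n:=\phi_\alpha^{-1}\circ f_{\alpha_n}\circ\phi_\alpha$ are defined and holomorphic on $\rho_n\D$ for suitable $\rho_n\uparrow r$, with $H_n(z)=e^{2\pi i\alpha_n}z+\cal O(z^2)$. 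Fixing $\epsilon\in(0,1)$ and applying the Koebe distortion theorem to $\zeta\mapsto\phi_\alpha(r\zeta)/r$ (which bounds $|\phi'_\alpha|$ and $|(\phi_\alpha^{-1})'|$ above and below on $(1-\epsilon)r\D$, resp.\ on $\phi_\alpha((1-\epsilon)r\D)$, by constants depending only on $\epsilon$), one gets, for all large $n$,
\[|H_n(z)-R_\alpha(z)|\le \tfrac{c}{\epsilon}\,K\,|\alpha_n-\alpha|\qquad (|z|\le(1-\epsilon)r),\]
with $c$ a universal constant.

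Next I would rescale: $\wt H_n(z):=\tfrac1{(1-\epsilon)r}\,H_n\big((1-\epsilon)r\,z\big)$ is holomorphic on $\D$ for large $n$, has the form $e^{2\pi i\alpha_n}z+\cal O(z^2)$, and satisfies $|\wt H_n-R_\alpha|\le \tfrac{cK}{\epsilon(1-\epsilon)r}|\alpha_n-\alpha|$ on $\D$. Since $(\alpha_n)$ is precisely the sequence attached to $\alpha$ in \Cref{lem:main}, that lemma applies to $(\wt H_n)$ with Lipschitz constant $\max\!\big(1,\tfrac{cK}{\epsilon(1-\epsilon)r}\big)$. Conversely, any rotation domain $V$ of $\wt H_n$ gives the rotation domain $\phi_\alpha\big((1-\epsilon)r\,V\big)$ of $f_{\alpha_n}$, which by \Cref{sub:sd} lies in $\Delta(f_{\alpha_n})$; since $\phi_\alpha(0)=0$ and $\phi'_\alpha(0)=1$ the conformal radius is unchanged under $\phi_\alpha$, so
\[r(f_{\alpha_n})\ \ge\ (1-\epsilon)\,r\cdot r(\wt H_n).\]

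Then I would treat the two cases. For $\alpha\notin\Q$, \Cref{lem:main}(1) gives $\liminf r(\wt H_n)\ge 1$ irrespective of the Lipschitz constant, hence $\liminf r(f_{\alpha_n})\ge (1-\epsilon)r$ for every $\epsilon$, i.e.\ $\liminf r(f_{\alpha_n})\ge r(f_\alpha)$, which is claim (1). For $\alpha=p/q$, \Cref{lem:main}(2) gives $\liminf r(\wt H_n)\ge\exp\!\big(-C(\tfrac{cK}{\epsilon(1-\epsilon)r},q)\big)$, hence $\liminf r(f_{\alpha_n})\ge (1-\epsilon)r\exp\!\big(-C(\tfrac{cK}{\epsilon(1-\epsilon)r},q)\big)$; inserting the explicit form \eqref{eq:C} of $C$, using $r\le 1$, and choosing $\epsilon$ suitably (essentially $\epsilon\sim 1/q$) one absorbs the distortion constants into a single $C'(K,q)>0$ to reach $\liminf r(f_{\alpha_n})\ge r(f_\alpha)\exp(-C'(K,q))$. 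The continuity and monotonicity of $K\mapsto C'(K,q)$ and the limit $C'(K,q)\to 0$ as $q\to\infty$ are then inherited from those of $C$.

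I expect the main obstacle to be the passage to the linearising coordinate. One is forced to stay a fixed distance $\epsilon$ from $\partial\Delta(f_\alpha)$, where $(\phi_\alpha^{-1})'$ blows up: carrying the estimate all the way to the boundary of the maximal domain $\rho_n\D$ of $H_n$ would only yield a H\"older-$\tfrac12$ bound in $\alpha_n-\alpha$, which \Cref{lem:main} cannot exploit. The remaining delicate point is then the bookkeeping that plays the lost factor $1-\epsilon$ and the $\epsilon$-dependent distortion constant off against the rescaling, so that the resulting $C'(K,q)$ carries the stated uniformity; in the irrational case this is painless, since there the Lipschitz constant is irrelevant, and all the real work is in the rational case.
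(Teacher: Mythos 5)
Your strategy — transfer $f_{\alpha_n}$ to the linearising coordinate of $f_\alpha$, rescale to $\D$, and feed the result to \Cref{lem:main} — is exactly the one the paper follows, and you correctly identify the need to stay a distance $\eps$ from $\partial\Delta(f_\alpha)$ and optimize $\eps\sim 1/q$ at the end. But there is a genuine gap in the estimate on $H_n$, and it is precisely at the step you yourself flag as delicate.

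The bound you derive is $|H_n(z)-R_\alpha(z)|\le \tfrac{c}{\eps}K|\alpha_n-\alpha|$ on $(1-\eps)r\D$, i.e.\ a bound that is \emph{constant in $z$}. After the rescaling $\wt H_n(z)=\tfrac1{(1-\eps)r}H_n((1-\eps)rz)$ you therefore pick up a factor $\tfrac{1}{(1-\eps)r}$, so the effective Lipschitz constant passed to \Cref{lem:main} is $\tfrac{cK}{\eps(1-\eps)r}$, which depends on $r=r(f_\alpha)$. Inserting $C(K,q)=\tfrac{\log q}{q}+\tfrac{\log K}{q}+\tfrac{c_1}{q}$ yields a term $-\tfrac{\log r}{q}$ in the exponent, and since $r$ may be arbitrarily close to $0$ this \emph{cannot} be absorbed into a constant depending only on $K$ and $q$. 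The remark ``using $r\le 1$'' does not help: the danger is $r$ small, not $r$ large, so $1/r$ is unbounded over the class of families to which the lemma must apply. In other words, as written your proof would produce a constant $C'$ that depends on the particular family through $r(f_\alpha)$, contradicting the statement.

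The missing idea is a Schwarz-lemma sharpening at the very start: since $f_\alpha-f_\beta$ is holomorphic on $\D$ with value $0$ at $0$ and bounded by $K|\alpha-\beta|$, one has in fact
\[|f_\alpha(z)-f_\beta(z)|\le K|\alpha-\beta|\cdot |z|.\]
With this, when you estimate $|H_n(z)-R_\alpha(z)|$ you bound $|f_{\alpha_n}(\phi_\alpha(z))-f_\alpha(\phi_\alpha(z))|\le K|\alpha_n-\alpha|\cdot|\phi_\alpha(z)|$, and the Koebe \emph{upper} bound gives $|\phi_\alpha(z)|\le |z|/\eps^2$ on $(1-\eps)r(\alpha)\D$ — the point being that this scales with $|z|$ and not with $r(\alpha)$. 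Combined with the lower bound $|\phi'_\alpha|\ge \eps/8$ this yields $|H_n(z)-R_\alpha(z)|\lesssim \tfrac{K}{\eps^3}|\alpha_n-\alpha|\cdot |z|$, and now the factor $|z|$ is precisely what makes the rescaling by $(1-\eps)r$ \emph{neutral}: the rescaled Lipschitz constant is $\approx 9K/\eps^3$, independent of $r(\alpha)$. With this correction the rest of your argument (the two cases, the optimization $\eps\sim 1/q$, and the inherited continuity/monotonicity of $C'$) goes through and matches the paper's proof.
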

Here we can get: 
\begin{equation}\label{eq:C2}
C'(K,q) = 4\frac{\log q}{q} + \frac{\log K}{q} + \frac{c_2}{q}
\end{equation}
where $c_2$ is a positive universal constant. As in \cref{eq:C}, this estimate may be non-optimal.
\begin{proof}
If $r(\alpha)=0$ the claim is trivial so we assume $r(\alpha)>0$.

First, we can immediately improve the inequality $|f_\alpha(z)-f_\beta(z)|\leq K |\alpha-\beta|$ by Schwarz's inequality because $f_\alpha-f_\beta$ maps $0$ to $0$:
\[|f_\alpha(z)-f_\beta(z)|\leq K |\alpha-\beta| \cdot|z|.\]
Consider the linearizing map $\phi_\alpha$.
Let $g_\beta = \phi_\alpha^{-1} \circ f_\beta \circ \phi_\alpha$.
Then $g_\alpha$ is the restriction of $R_\alpha$ to $r(\alpha)\D$. 
The maps $g_\beta$ are defined on subsets $\dom g_\beta$ of the disk $r(\alpha)\D$ that tend to this disk in the following sense: $\forall r<r(\alpha)$, $\exists \eta>0$ such that $|\beta-\alpha|<\eta$ $\implies$ $r\D\subset \dom g_\beta$. 

Fix for a moment a value $r<r(\alpha)$. Write $\eps = 1- r/r(\alpha)$ so that $r=(1-\eps)r(\alpha)$. Consider the family
\[\tilde f_\beta = r^{-1}g_\beta(r z)\]
restricted to $\D$ and to values $\beta$ such that $|\beta-\alpha|<\eta$ where $\eta$ is as above, so that $\tilde f_\beta$ is indeed defined on the whole of $\D$.
We show that for $\beta$ close enough to $\alpha$ the family $\tilde f_\beta$ is $K'$-Lipschitz for a constant $K'$ that we determine.

We will use the following two property of univalent maps, see \cite{Po}, Theorem~1.3 page~9: if $\phi:\D\to \C$ is holomorphic, injective and satisfies $\phi(z)=z + \cal O(z^2)$ at $0$ then $|\phi(z)|\leq |z|/(1-|z|)^2$ and $|\phi'(z)|\geq (1-|z|)/(1+|z|)^3$.
These bound are optimal because the Koebe function $f(z) = z/(1-z)^2$ reaches them.

Transferring them to the function $\phi_\alpha$ by letting $\phi(z)=r(\alpha)^{-1}\phi_\alpha(r(\alpha)z)$ this implies
\[\forall z \in r\D,\quad |\phi_\alpha(z)| \leq \frac{|z|}{(1-\frac{r}{r(\alpha)})^2}\qquad
|\phi'_\alpha(z)| \geq \frac{1-\frac{r}{r(\alpha)}}{(1+\frac{r}{r(\alpha)})^3},\]
which implies
\[\forall z \in r\D,\quad |\phi_\alpha(z)| \leq \frac{|z|}{\eps^2}\qquad
|\phi'_\alpha(z)| \geq \frac{\eps}{8}.\]

From the lower bound on $\phi'_\alpha$ it follows that the family $g_\beta$ satisfies the following estimate:
\[\forall z\in r\D,\quad |g_\beta(z)-g_\alpha(z)| \leq K(\beta) |\beta-\alpha|\cdot |\phi_\alpha(z)|\]
with $K(\beta)\tend 8K/\eps $ as $\beta\to\alpha$:
this can be proved for instance by contradiction and extracted subsequences for $|z|\in[r/2,r]$ and then by the maximum principle it extends to $|z|< r$.

Transferring to $\tilde f_\beta$ and using the upper bound on $\phi_\alpha(z)$ we get
\[ \forall z\in r\D,\quad
|\tilde f_\beta(z) - \tilde f_\alpha(z)| \leq K'(\beta) |\beta-\alpha|\cdot |z|
\]
with $K'(\beta) \tend 8K/\eps^3$ so we can take a uniform $K' = 9K/\eps^3$ by requiring $\beta$ to be close enough to $\alpha$.

We now apply \Cref{lem:main} to the family $\tilde f_\beta$.

If $\alpha\notin\Q$ we get that $\liminf r(\tilde f_{\alpha_n}) \geq 1$ hence $\liminf r(f_{\alpha_n}) \geq r$. Since this is true for all $r<r(\alpha)$ this implies $\liminf r(f_{\alpha_n})\geq r(f_\alpha)$.

If $\alpha\in\Q$ we get $\liminf r(\tilde f_{\alpha_n}) \geq \exp(-C(K',q))$ with $K'=9K/\eps^3$. Recall that $r=(1-\eps)r(\alpha)$ hence $\liminf r({\alpha_n}) \geq r\exp(-C(K',q)) = r(\alpha) \exp(-Q)$ with
\[Q = -\log(1-\eps)+C(9K/\eps^3,q).\] Since this is true for all $\eps\in(0,1)$ we get
$\liminf r({\alpha_n}) \geq r(\alpha)\exp(-C'(K,q))$ with 
\[C'(K,q) := \inf\setof{-\log(1-\eps)+C(9K/\eps^3,q)}{\eps \in (0,1) }.\]

The map $K\mapsto C'(K,q)$ is continuous. One argument to prove this claim goes as follows: by \Cref{lem:main} the map $K\mapsto C(K,q)$ is continuous and has a limit as $K\to+\infty$ because it is monotonous. Hence the expression $Q$ extends to a continuous function of $(\eps,K)$ from $[0,1]\times (0,+\infty)$ to $[0,+\infty]$ where the topology is extended to include infinity in the range. This is a sufficient condition for the function $K\mapsto \inf_{\eps\in(0,1)} Q(\eps,K,q)$ to be continuous.

Increasing $K$ while fixing $q$ and $\eps$ does not decrease $Q$ hence $K\mapsto C'(K,q)$ is non-decreasing. 

For each $q$, fixing $K$ and $\eps$, we have $Q\tend -\log(1-\eps)$ when $q\to +\infty$ and this quantity can be made close to $0$ by choosing $\eps$ small. Hence $C'(K,q)\tend 0$ when $K$ is fixed and $q\to+\infty$.
\end{proof}

\begin{proof}[Proof of \Cref{eq:C2} from \cref{eq:C}] Using the notation of the proof above,
we must derive an upper bound for the infimum over $\eps\in(0,1)$ of $-\log(1-\eps) + c_1/q+ \frac{1}{q} \log (9Kq/\eps^3)$.
This is a function of $\eps$ whose derivative has the following simple expression $\frac{1}{1-\eps} -\frac{3}{q\eps}$. So the function is strictly convex with infinite limits at $\eps=0$ and $\eps=1$, and a unique minimum at $\eps = 1/(1+q/3)$. We get
$C'(K,a) = \log(1+3/q) + c_1/q+  \log (9Kq)/q + 3 \log(1+q/3)/q \leq \log(K)/q +4\log(q)/q+ c_2/q$.
\end{proof}

\begin{corollary}\label{cor:follow}
If the family $(f_\alpha)$ is non-degenerate in the sense of \Cref{def:dege} and Lipschitz-continuous with respect to $\alpha$ then 
\Cref{cond:bdd} holds.
\end{corollary}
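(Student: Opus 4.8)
The plan is to follow the skeleton of the proof of \Cref{lem:aa}, using non-degeneracy to produce parameters with vanishing conformal radius near any prescribed Brjuno number, and using the perturbation \Cref{lem:pert} as a substitute for the appeal to optimality of Brjuno's condition made there (which we no longer assume). Let $K\ge 1$ be a Lipschitz constant for the family, and fix $\alpha\in\cal B$ and $\rho$ with $0<\rho<r(\alpha)$; we must produce bounded type numbers $\alpha_n\to\alpha$ with $r(\alpha_n)\to\rho$. Recall that $r(\beta)>0$ precisely when $f_\beta$ is linearizable (by the discussion in \Cref{sub:sd}), so non-degeneracy of the family means exactly that $\{\beta\in I~;~r(\beta)=0\}$ is dense in $I$. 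The construction reduces to the following claim: \emph{for every $\eta>0$ and every $\epsilon>0$ there exists a bounded type number $\beta\in I$ with $|\beta-\alpha|<\eta$ and $(1-\epsilon)\rho<r(\beta)<\rho$}; granting it, one applies the claim with $\eta=1/n$ and $\epsilon=1/n$ and lets $n\to\infty$ to obtain \Cref{cond:bdd}.

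To prove the claim I would first fix, using the property from \Cref{lem:pert} that $C'(K,q)\to 0$ as $q\to+\infty$, an integer $Q$ with $\exp(-C'(K,q))>1-\epsilon$ for all $q>Q$. Since $\alpha$ is irrational, the rationals of denominator at most $Q$ lie at some positive distance $d(Q)$ from $\alpha$, so every rational of $(\alpha-d(Q),\alpha)$ has denominator $>Q$. By density of $\{r=0\}$, choose $b\in I$ with $b<\alpha$, $\alpha-b<\min(\eta,d(Q))$ and $r(b)=0$, and set
\[
c=\inf\setof{x\in[b,\alpha]}{r(x)\ge\rho}.
\]
This set contains $\alpha$, hence is non-empty, and it is closed by upper semi-continuity of $r$ (\Cref{lem:usc}), so the infimum is attained and $r(c)\ge\rho>0$. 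In particular $c\ne b$, so $b<c\le\alpha$, and by definition of the infimum $r(x)<\rho$ for every $x\in[b,c)$.

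Next I would apply \Cref{lem:pert} to the parameter $c$ and let $(c_m)_m$ be the resulting bounded type sequence, arranged so that $c_m<c$ for every $m$: this is possible because when $c\notin\Q$ the sequence of \Cref{lem:main} satisfies $\alpha_{2m}<c<\alpha_{2m+1}$, so we keep its even-indexed terms, and when $c=p/q$ in lowest terms one of the two (finite) continued fraction expansions produces a sequence lying below $c$. \Cref{lem:pert} then gives $\liminf_m r(c_m)\ge\delta\,r(c)\ge\delta\rho$, where $\delta=1$ if $c\notin\Q$ and $\delta=\exp(-C'(K,q))$ if $c=p/q$. In the second case $c$ is a rational of $(b,\alpha)$, hence $q>Q$ and $\delta>1-\epsilon$; thus $\delta>1-\epsilon$ in every case. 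For $m$ large we have $c_m\in(b,c)\subseteq[b,\alpha)$, whence $r(c_m)<\rho$ and $|c_m-\alpha|<\eta$ (since $|c-\alpha|\le\alpha-b<\eta$); as moreover $\liminf_m r(c_m)\ge\delta\rho>(1-\epsilon)\rho$, we may pick such an $m$ with in addition $r(c_m)>(1-\epsilon)\rho$ and set $\beta=c_m$, which proves the claim.

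The only genuinely delicate point is the multiplicative loss $\exp(-C'(K,q))$ incurred in the rational case of \Cref{lem:pert}: it is harmless precisely because that loss tends to $1$ as $q\to\infty$ while irrationality of $\alpha$ forces every rational sufficiently close to $\alpha$ to have large denominator, so it can be absorbed by shrinking $\eta$ (equivalently, by enlarging $Q$). Everything else is routine bookkeeping: upper semi-continuity controls $r$ from above at the infimum $c$; non-degeneracy supplies the anchor point $b$ with $r(b)=0$ (without it the infimum would be taken over a possibly empty set and the construction would collapse, consistently with the hypotheses of \Cref{thm:main}); and \Cref{lem:pert} converts $c$ into nearby bounded type parameters with essentially unchanged conformal radius.
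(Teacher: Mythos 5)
Your proof is correct and follows essentially the same route as the paper's: you anchor the infimum $c$ against a nearby parameter $b$ with $r(b)=0$ supplied by non-degeneracy, then apply \Cref{lem:pert} at $c$ to produce bounded type parameters whose conformal radius approaches $\rho$. The only cosmetic difference is that you fix the denominator threshold $Q$ in advance (using irrationality of $\alpha$ to bound away rationals of small denominator) and so construct each $\beta$ directly, whereas the paper phrases the same absorption of the $\exp(-C'(K,q))$ loss by letting $b\to\alpha$ and observing that then $q\to+\infty$.
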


Recall that \Cref{cond:bdd} is stated as follows: For every Brjuno number $\alpha$ and $\rho\in\R$ with $0<\rho<r(\alpha)$, there exists a sequence of bounded type numbers $\alpha_n \tend \alpha$ such that $r(\alpha_n)\tend \rho$. Here we moreover get that there is such a sequence on each side of $\alpha$.

\begin{proof}
We adapt the proof of \Cref{lem:aa}.

Let $\alpha\in\cal B$ and $\rho \in \R$ with $0<\rho<r(\alpha)$. By the non-degeneracy assumption, arbitrarily close to $\alpha$ there are $b\in I$ such that $r(b)=0$. Choose one and assume for simplicity that $b<\alpha$ (the other case is similar). Consider then $K=\setof{x\in[b,\alpha]}{r(x)\geq \rho}$, which is non-empty because $\alpha\in K$, and let $c=\inf K$. By upper semi-continuity
\[r(c)\geq \rho.\]
In particular $r(c)\neq 0$, hence $c\neq b$.

Here, compared to \Cref{lem:aa}, we cannot anymore deduce that  $c\in \cal B$ because we do not assume optimality.
Instead, we use \Cref{lem:pert} with $c$ in place of $\alpha$. It provides some special sequence $\alpha_n\tend c$ of bounded type numbers. If $c\notin \Q$ we let $c_n = \alpha_{2n}<c$. If $c\in \Q$ the sequence $\alpha_n$ is either below or above $c$, depending which continued fraction of $c$ was chosen among the two possible, so we choose it so that $\alpha_n<c$ and let $c_n=\alpha_n$. For all $n$ big enough we have $c_n\in(b,c)$. Then by definition of $c$, we have $r({c_n})<\rho$.
Now there are two cases. 
\begin{itemize}
\item Either $c\notin \Q$. Then \Cref{lem:pert} states that $\liminf r({c_n})\geq r(c)$ so
\[\lim r(c_n) = \rho.\]
We can thus choose $n$ so that $r(c_n)$ is arbitrarily close to $\rho$.
\item Or $c\in\Q$. Then \Cref{lem:pert} states that $\liminf r({c_n})\geq r(c)/\exp(C'(K,q))$ where $q$ is the denominator of $c=p/q$ written in irreducible form. So
\[\liminf r({c_n}) \in [\rho,\rho/\exp(C'(K,q))].\]
\end{itemize}
The number $c\in(b,\alpha)$ above depends on the choice of $b$. If we now let $b$ tend to $\alpha$ then $c$ tends to $\alpha$ and in particular: whenever $c$ is rational its denominator $q$ tends to $+\infty$, so $C'(K,q)$ tends to $0$. 
\end{proof}

As mentionned in \Cref{sub:other}, \Cref{cond:bdd} is all that is needed to get the results of \cite{BC}.
Hence by the corollary above the results of \cite{BC} extend to all families that are Lipschitz and non-degenerate. In particular we have \Cref{thm:main}.

\section{Proof of the main lemma}\label{sec:proof}

We will use a construction due to Douady and Ghys
which has been quantified by Yoccoz (see \cite{D}, \cite{Y} or
\cite{PM2}), and is called \emph{sector renormalization}.
It has been treated in many articles and books since, so we will not
motivate its construction here.

\subsection{Lifts}\label{sub:lift}

\subsubsection{Definitions}\label{subsub:defs}

Let
\begin{itemize}
\item $\H$ denote the upper half plane,
\item $T: Z\mapsto Z+1$.
\item $T_\alpha: Z\mapsto Z+\alpha$.
\item For $\alpha\in \R$, let $\cal S(\alpha)$ be the space of univalent (i.e.\ injective holomorphic) maps $F:\H\to \C$ such that $F\circ T=T\circ F$ holds on $\H$ and such that $F(Z)-Z\tend \alpha$ as $\im(Z)\to +\infty$.
\end{itemize}

We call $\alpha$ the \emph{rotation number} of $F$ even though it is rather a translation that $F$ is compared to, and we write it $\alpha(F)$.
A map $F\in\cal S(\alpha)$ satisfies the property:
\[F(Z)=Z+\alpha+h(e^{2\pi i z})\]
for a holomorphic map $h:\D\to \C$ with $h(0)=0$.

\medskip

The map
\[E(z) = e^{2\pi i z}
\]
is a universal cover from $\C$ to $\C^*=\C\setminus\{0\}$
and its restriction to $\H$ is a universal cover from $\H$ to $\D^* = \D\setminus\{0\}$.

For any univalent map $f:\D\to \C$ which fixes $0$ with derivative $e^{2i\pi \alpha}$, 
a \emph{lift} is a holomorphic map $F$ such that
$f\circ E = E\circ F$. Then $F\in \cal S(\alpha')$ for some $\alpha'\equiv \alpha\bmod\Z$.
Lifts exist and are unique if we require $\alpha'=\alpha$. Conversely every $F\in \cal S(\alpha)$ arises as the lift of a (unique) univalent map $f$ as above.

Given $\alpha\in \R$, the space ${\cal S}(\alpha)$ is compact for the
topology of uniform convergence on compact subsets of $\H$. If $F\in
{\cal S}_\alpha$, we let $K(F)$ be the set of points $Z\in \H$ whose
orbit under iteration of $F$ remains in $\H$. For the corresponding $f$, we have
\[K(f) = \{0\}\cup E(K(F))\] and $\Delta(f) = \emptyset$ if $\Delta(F) =\emptyset$, otherwise $\Delta(f) = \{0\}\cup E(\Delta(f))$.
We say that $F$ is linearizable whenever the corresponding map $f$ is linearizable. This happens if and only if $K(F)$ contains an
upper-half plane and we write
\[h(F) =\inf\setof{h>0}{K(F)\text{ contains }``\Im Z>h"}.\]
We set $h(F)=+\infty$ otherwise.
We then have
\[r(f) \geq e^{-2\pi h(F)}.\]

\subsubsection{Transfer of the Lipschitz condition to the lifts}

Consider the lifts $T_\alpha$, $F_n\in \cal S(\alpha_n)$ of $R_\alpha$, $f_n$.
We can factor $f_n$ as follows: $f_n(z)=R_\alpha(z) g_n(z)$ with $g_n(0)=e^{2\pi i(\alpha_n-\alpha)}\neq 0$.
Then from $|f_n(z)-R_\alpha(z)|\leq K|\alpha_n-\alpha|$ on $\D$ we get $|g_n(z)-1|\leq K|\alpha_n-\alpha|$ by a form of the maximum principle. In particular for $n$ big enough we have that $\|g_n-1\|_\infty \leq 1/2$. 
Then $F_n(Z) - T_{\alpha}(Z) = \log g_n(E(z))$ for the principal branch of $\log$. (For this we also have to take $n$ big enough so that $|\alpha_n-\alpha|<1$, but note that with the special sequence $\alpha_n$ under consideration, it already holds for all $n\geq 0$.) Since the derivative of $\log$ has modulus less than $2$ on $B(1,1/2)$ we get
\begin{eqnarray*}
&&\forall Z\in\H,\ |F_n(Z) -Z-\alpha| \leq 2K|\alpha_n-\alpha|,\\
&&\forall Z\in\H,\ |F_n(Z) -Z-\alpha_n| \leq (2K+1)|\alpha_n-\alpha|.
\end{eqnarray*}

In the rest of \Cref{sec:proof} we will prove the following version of the main lemma (\Cref{lem:main}):

\begin{lemma}\label{lem:main2}
There exists a continuous function $C(K)$ such that for all $\alpha$, if we define $\alpha_n$ as in \Cref{lem:main} (we repeat the definition below for convenience) then for all $K\geq 1$ and all sequence $F_n\in \cal S(\alpha_n)$ such that 
\[|F_n (Z)- Z-\alpha_n|\leq K|\alpha_n-\alpha|:\]
\begin{enumerate}
\item If $\alpha$ is irrationnal then
\[\limsup h(F_n) \leq 0.\]
\item If $\alpha = p/q$ in irreducible form, then
\[\limsup h(F_n) \leq C''(K,q)= \frac{\log (Kq)}{2\pi q} + \frac{c_3}{q}\]
for some universal constant $c_3>0$.
\end{enumerate}
\end{lemma}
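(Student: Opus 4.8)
The strategy is to run Yoccoz's sector renormalization on the lifts $F_n \in \cal S(\alpha_n)$ and track how $h(F_n)$ behaves under iterated renormalization, using the near-translation hypothesis $|F_n(Z) - Z - \alpha_n| \leq K|\alpha_n - \alpha|$ as the base case. The key point is that the continued fraction tails of $\alpha_n$ have been \emph{designed} so that after finitely many renormalization steps (exactly $n$ steps in the irrational case, or $k+1$ steps in the rational case $\alpha = p/q$, where $k$ is the length of the chosen expansion of $p/q$) the renormalized rotation number becomes the \emph{fixed} irrational $1+\sqrt 2$ (or its shifts), which is of bounded type. Since $1+\sqrt2$ is Diophantine, the renormalized map lies in $\cal S(1+\sqrt2)$ and, being a perturbation of $T_{1+\sqrt2}$, is linearizable with a controlled value of $h$; this is the classical quantitative input (from \cite{Y}, or \cite{BC}) that the main lemma's Remark says is already known for Brjuno numbers.

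\medskip
\noindent\textbf{Key steps, in order.}
\emph{(1)} Recall Yoccoz's sector renormalization operator $\cal R$ and its basic estimate: if $F \in \cal S(\alpha')$ with $\alpha' \in (0,1)$ and $F$ is close enough to $T_{\alpha'}$, then $\cal R F \in \cal S(\{-1/\alpha'\})$ is defined, and there is an explicit relation controlling $h(\cal R F)$ in terms of $h(F)$ and $\alpha'$ — roughly $h(\cal R F) \lesssim \alpha' \cdot h(F) + O(\alpha')$, so small $h$ stays small and the ``height'' gets multiplied by the continued-fraction denominators going down. \emph{(2)} Start from $F_n$ with $h$-value essentially $0$ (it is a near-translation, so $K(F_n)$ contains a half-plane $\Im Z > O(K|\alpha_n - \alpha|)$, giving $h(F_n) \leq O(K|\alpha_n-\alpha|) \to 0$). \emph{(3)} In the irrational case: renormalize $n$ times. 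After these $n$ steps the rotation number is $[1+a_{n+1}; 1+\sqrt2] \bmod 1$ or directly a shift of $1+\sqrt2$; the accumulated distortion is controlled because each step multiplies the error by a factor $< 1$ coming from $\alpha^{(j)} < 1$, and $h(F_n) \to 0$ forces $h$ of the $n$-times renormalized map to $0$ as well, after which one more application of the bounded-type linearization estimate gives $\limsup h \leq 0$, whence $\liminf r(f_n) \geq 1$. \emph{(4)} In the rational case $\alpha = p/q$: here $\alpha_n = [a_0; \ldots, a_k, n+1+\sqrt2]$, so after $k+1$ renormalizations — combining the $k$ partial quotients of $p/q$, whose product of denominators is exactly $q$ — we land at rotation number $1/(n+1+\sqrt2) \to 0$, i.e.\ we have renormalized ``all the way through'' $p/q$ and reach a map in $\cal S$ of a small irrational with $h$-value bounded by $C \cdot (\text{something})/q$ plus the contribution $\log(Kq)/(2\pi q)$ from un-winding the $q$-fold composition. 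This is where the factor $1/q$ and the $\log(Kq)$ term in $C''(K,q)$ come from: renormalizing past a rational of denominator $q$ divides the final height by $q$ but the initial error bound, transported up, contributes a $\log$ term. \emph{(5)} Assemble: $h(F_n) \leq$ the final-step bound, translate back via $r(f_n) \geq e^{-2\pi h(F_n)}$, and read off $C''(K,q) = \frac{\log(Kq)}{2\pi q} + \frac{c_3}{q}$ with the stated monotonicity/continuity in $K$ and decay in $q$ — these follow because the explicit Yoccoz constants depend continuously and monotonically on the size of the perturbation.

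\medskip
\noindent\textbf{Main obstacle.} The hard part is the quantitative bookkeeping in step (4): making precise how $h$ transforms under a single sector renormalization near a rational rotation number, and in particular showing that the ``bad'' contribution accumulated while renormalizing through the $k$ partial quotients of $p/q$ is bounded by something of order $(\log(Kq) + c_3)/q$ uniformly in the initial error $\sim K|\alpha_n - \alpha|$ — including the subtlety that $|\alpha_n - \alpha|$ is itself of order $1/(q^2 n)$ or so, and one must check the renormalization is even well-defined for all large $n$ (domains do not degenerate). One must also verify that renormalizing the \emph{fixed} tail $1+\sqrt2$ (respectively $1/(n+1+\sqrt2)$) gives a \emph{uniform} linearization estimate independent of $n$; this is where choosing a bounded-type tail is essential, and it is the reason the construction forces that specific continued-fraction surgery. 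The irrational case (steps 1--3) is comparatively soft since we only need $\limsup h \leq 0$, which degrades gracefully as long as each renormalization step is a contraction on the error.
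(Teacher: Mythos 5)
Your proposal correctly identifies the broad machinery (sector renormalization, lifts, the role of the bounded-type tail $1+\sqrt 2$) but contains a genuine conceptual error that invalidates the argument for the irrational case, and in fact for the whole lemma.

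The error is in step~(2): you claim that the near-translation hypothesis $|F_n - T_{\alpha_n}| \leq K|\alpha_n - \alpha|$ gives $h(F_n) \leq O(K|\alpha_n-\alpha|) \to 0$. This is false. Proximity of $F_n$ to a translation $T_{\alpha_n}$ says nothing directly about $K(F_n)$; whether $K(F_n)$ contains a half-plane depends on the arithmetic of $\alpha_n$, not on the size of $F_n - T_{\alpha_n}$. If $\alpha_n$ were badly Diophantine, $F_n$ could fail to be linearizable ($h(F_n) = +\infty$) despite the Lipschitz bound. If the claim in step~(2) were true, the lemma would be trivial and the continued-fraction surgery building the tail $1+\sqrt 2$ into $\alpha_n$ would be unnecessary. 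As a consequence, the forward direction in your step~(3) (``$h(F_n)\to 0$ forces $h$ of the renormalized map to $0$'') is both unjustified and logically backwards: the correct argument renormalizes \emph{first} to reach the controlled tail $\sqrt 2$ (where Siegel/Brjuno gives $h(\cal R F_n) \leq C_{\sqrt 2}$), then transports this bound \emph{back down} via $h(F_n) \lesssim |\beta| \cdot h(\cal R F_n) + (\text{error})$.

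You also miss the two-phase structure that the irrational case genuinely requires. Doing a single direct renormalization of order $n+1$ (which is what the paper does, not $n$ iterated order-$1$ steps --- proximity to a rotation allows a one-shot treatment of the high iterates) and applying the $\cal S(\sqrt 2)$ bound only yields a \emph{weak} uniform estimate $\limsup h(F_n) \leq \tfrac{1}{2\pi}\log(10K\sqrt 2)$, a constant depending on $K$ but not tending to $0$. Saying ``each renormalization step is a contraction on the error'' does not make this constant go to $0$. The paper's crucial enhancement (\Cref{sub:improve}) is a \emph{prior} renormalization at a fixed but arbitrarily high order $k+1$, combined with \Cref{lem:improve} ($\limsup h(F_n) \leq |\beta(k)| \limsup h(\cal R \wt F_n)$) and \Cref{lem:nK} (transfer of the Lipschitz condition, at the cost of a universal factor $D$ and a Schwarz--Pick cost $1/(2\eps)$ to control $F_n'$); letting $k\to\infty$ then makes the factor $|\beta(k)| \to 0$. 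Without this enhancement you only get a bounded $\limsup$, not $\limsup h(F_n) \leq 0$. Your rational-case sketch is closer in spirit to the paper's \Cref{sub:apq}, but it too relies on \Cref{lem:improve} and \Cref{lem:nK} and on first establishing the $\alpha\equiv 0$ base case via one order-$1$ renormalization; the factor $1/q$ comes from $|\beta| = 1/q$ in \Cref{lem:improve}, not from an accumulation over $k+1$ iterated steps.
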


\noindent It implies \Cref{lem:main}, with the constant $C(K,q) = 2\pi C''(2K+1,q) \leq \log (Kq)/q + c_1/q$ for some universal constant $c_1>0$. 

\medskip

\begin{reminder} For convenience, we repeat here the definition of $\alpha_n$ given in \Cref{lem:main}: let $[a_0;a_1,\ldots]$ be the continued fraction expansion of $\alpha$.
If $\alpha\in \Q$ then it has two such expansions\footnote{See the end of \Cref{sub:remind}.} and both are finite : we choose one, $[a_0;a_1,\ldots,a_k]$.
\begin{enumerate}
\item If $\alpha\notin\Q$ we let $\alpha_n = [a_0;a_1,\ldots,a_n,1+a_{n+1},1+\sqrt{2}]$.
\item If $\alpha\in\Q$ we let $\alpha_n = [a_0;a_1,\ldots,a_k,n+1+\sqrt{2}]$.
\end{enumerate}
\end{reminder}

\medskip

The proof of \Cref{lem:main2} is based on renormalization and on the following:
\begin{lemma}\label{lem:s2}
There exists $C_{\sqrt2}>0$ such that $\forall F\in \cal S(\sqrt2)$, $h(F)\leq C_{\sqrt 2}$.
\end{lemma}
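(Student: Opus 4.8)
The plan is to bound $h(F)$ uniformly over $F\in\cal S(\sqrt2)$ by exploiting that $\sqrt2=1+\sqrt2-1$ is a bounded type number (indeed $\sqrt2=[1;2,2,2,\ldots]$, and $1+\sqrt2=[2;2,2,\ldots]$ is the fixed point of the Gauss map restricted to this periodic orbit). The space $\cal S(\sqrt2)$ is compact for uniform convergence on compact subsets of $\H$, so if the conclusion failed there would be a sequence $F_n\in\cal S(\sqrt2)$ with $h(F_n)\to+\infty$; passing to a subsequential limit $F_\infty\in\cal S(\sqrt2)$ we would get, by upper semi-continuity of $h$ on $\cal S(\alpha)$ (equivalently, by Lemma~\ref{lem:li}/Lemma~\ref{lem:usc} transferred through $E$), that $h(F_\infty)=+\infty$, i.e.\ $F_\infty$ is not linearizable. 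But $\sqrt2$ is a Brjuno number (in fact bounded type), so Brjuno's theorem (\Cref{prop:basic}(2)) forces $r(f_\infty)>0$, hence $h(F_\infty)<+\infty$ — a contradiction. Thus the bound $C_{\sqrt2}$ exists. Alternatively, and more quantitatively, one can cite Yoccoz's renormalization estimates or Herman's theorem directly: for bounded type $\beta$ the linearization radius is bounded below by an explicit function of the partial quotients, which are all bounded by $2$ here, giving an effective $C_{\sqrt2}$.

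Concretely I would argue as follows. First recall that for $F\in\cal S(\beta)$, linearizability of $F$ is equivalent to $r(f)>0$ where $f$ is the corresponding map on $\D^*$, and that $h(F)$ and $r(f)$ are related by $r(f)\ge e^{-2\pi h(F)}$, with a matching inequality in the other direction coming from the fact that the linearizing coordinate of $F$ covers the linearizing coordinate of $f$; so a lower bound on $r(f)$ over the relevant family yields an upper bound on $h(F)$. Second, observe that every $F\in\cal S(\sqrt2)$ is the lift of a univalent self-map $f:\D\to\C$ fixing $0$ with multiplier $e^{2\pi i\sqrt2}$, and the family of all such $f$ is a normal family (values in $\D$... more precisely in a fixed bounded set since $F$ maps into $\C$ with $F-Z\to\sqrt2$; one restricts attention to the induced maps and uses that $\cal S(\sqrt2)$ is compact). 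Third, apply compactness: a putative sequence realizing $h(F_n)\to+\infty$ has a locally uniform limit $F_\infty\in\cal S(\sqrt2)$, and upper semi-continuity of $h$ (proved exactly as \Cref{lem:usc}, transported via $E$) gives $h(F_\infty)=+\infty$; since $\sqrt2\in\cal B$, this contradicts Brjuno. Hence $\sup_{F\in\cal S(\sqrt2)}h(F)<+\infty$, and we call this supremum $C_{\sqrt2}$.

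The main obstacle is making sure the compactness/upper-semicontinuity argument is stated at the right level: one must know that $h:\cal S(\sqrt2)\to[0,+\infty]$ is upper semi-continuous, which is the lift-space reformulation of \Cref{lem:usc} together with \Cref{lem:li}, and one must know that the limit of maps in $\cal S(\sqrt2)$ stays in $\cal S(\sqrt2)$ — this is precisely the compactness of $\cal S(\alpha)$ recalled in \Cref{sub:lift}. The only genuinely external input is Brjuno's theorem applied to the single number $\sqrt2$, or equivalently Yoccoz's/Herman's quantitative linearization for bounded type, which also has the advantage of yielding an explicit value for $C_{\sqrt2}$ (relevant since the final constants $c_1,c_2,c_3$ in \Cref{lem:main} and \Cref{lem:pert} depend on it). If an explicit constant is wanted, I would invoke the standard estimate that for $\beta$ of bounded type with all partial quotients $\le N$ one has $h(F)\le \kappa(N)$ for some explicit $\kappa$, and take $N=2$.
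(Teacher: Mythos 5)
Your compactness argument has the semicontinuity going the wrong way. You need that $h$ is \emph{upper} semi-continuous on $\cal S(\sqrt2)$ for the topology of local uniform convergence --- i.e.\ that if $F_n\to F_\infty$ with $h(F_n)\to+\infty$ then $h(F_\infty)=+\infty$ --- but the lemmas you invoke prove the opposite. \Cref{lem:usc} and \Cref{lem:li} establish that $r$ is \emph{upper} semi-continuous ($r(\alpha)\geq\limsup r(\alpha_n)$), which in lift coordinates is \emph{lower} semi-continuity of $h$: the linearization domain cannot suddenly get larger in the limit. In the situation you set up, $h(F_n)\to+\infty$ is $r(f_n)\to 0$, and upper semi-continuity of $r$ only gives $r(f_\infty)\geq\limsup r(f_n)=0$, which is vacuous and in no way contradicts $r(f_\infty)>0$. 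The proof of \Cref{lem:li} makes this transparent: one extracts a normal-family limit of the maps $\phi_{\alpha_n}:\rho\D\to\D$ with $\rho=\liminf r(\alpha_n)$ and concludes $r(\alpha)\geq\rho$; when $\rho=0$ there is nothing to extract and nothing to conclude.

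What you actually need is \emph{lower} semi-continuity of $r$ (equivalently, upper semi-continuity of $h$) at the fixed Brjuno rotation number $\sqrt2$, i.e.\ stability of the linearization domain under perturbation with the same rotation number. This is true, but it is not a soft statement: proving it amounts to applying a quantitative Siegel/Brjuno/Yoccoz estimate to the conjugated map $\phi_{F_\infty}^{-1}\circ F_n\circ\phi_{F_\infty}$, which is close to the translation $T_{\sqrt2}$ on a large domain. Once you have such a quantitative estimate, however, you can apply it directly to any $F\in\cal S(\sqrt2)$ --- which is already close to $T_{\sqrt2}$ on a definite upper half-plane, uniformly over $\cal S(\sqrt2)$ --- and read off the bound $h(F)\leq C_{\sqrt2}$ with no compactness detour. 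So the compactness argument, as presented, is either circular (if lower semi-continuity of $r$ is taken from the quantitative theory) or has a genuine gap (if it is attributed to \Cref{lem:usc}/\Cref{lem:li}, which go the other way).

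Your alternative --- cite the quantitative linearization theorem of Siegel/Brjuno/Yoccoz (or Herman) directly for the bounded-type number $\sqrt2=[1;2,2,2,\ldots]$ --- is correct, and this is in fact what the paper does: \Cref{lem:s2} is stated with a pointer to \cite{Br,Ru,S,Y} and no detailed proof. That route also has the virtue you point out of producing an explicit value of $C_{\sqrt2}$, which enters the constants $c_1,c_2,c_3$ of the main estimates.
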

\noindent It is a consequence of Brjuno's or Siegel's theorems, and can also be proved using renormalization (Yoccoz), see \cite{Br,Ru,S,Y}.

\medskip

Note that for the proof of \Cref{lem:main2} it is enough to assume $\alpha\in[0,1)$ because $h(T^{-a}\circ F) = h(F)$ for any $a\in\Z$,
and shifting $\alpha$ by an integer shifts by the same amount the special sequences $\alpha_n$ defined in \Cref{lem:main}.

\subsection{Gluing}\label{sub:glue}

Rernormalization uses gluings, described below. We present here first a simplified version and an associated basic estimate. The next section will transpose this construction to a class of lifts.

Let $\ell=i(0,+\infty)$, i.e. half of the imaginary axis, endpoint excluded.
Consider a holomorphic map $F$ defined in a neighborhood\footnote{Since $\ell$ does not contain its endpoint, it means that the inner radius of such a neighborhood $V$ may shrink near this point. Soon we will consider the case $V=\H$.} of $\ell$ and such that:
\begin{equation}\label{eq:one}
(\forall W\in\ell) \quad |F(W) - W - 1| \leq 1/10 \quad\text{and}\quad |F'(W)-1| \leq 1/10.
\end{equation}
We do \emph{not} assume here that $F$ belongs to some $\cal S(\alpha)$.
The curve $\ell\cup [0,F(0)]\cup F(\ell)$ bounds an open strip $U$ in $\C$. See \Cref{fig:L}.
Gluing the boundaries $\ell$ and $F(\ell)$ of $\overline U$ via $F$, we obtain a surface with boundary that we write $\ov{\cal U}$.
Its ``interior'' $\cal U = \ov{\cal U}\setminus \partial\ov{\cal U}$ is a Riemann surface for the complex structure inherited from a neighborhood of $\ell\cup U\cup F(\ell)$; this includes $\ell$ (the gluing is analytic).
The Riemann surface $\cal U$ is biholomorphic to the punctured disk $\D^*$ or equivalently to the half-infinite cylinder $\H/\Z$: this follows from the existence of a quasiconformal homeomorphism that we build below (this construction was not invented by us, it can be found in \cite{Shi} for instance). For an introduction to quasiconformal maps we recommend the following reference: \cite{BF}.

Write $W=X+iY$. Define a homeomorphism
\[G:[0,1]\times(0,+\infty)\to \ell\cup U\cup F(\ell)\] 
as follows:
on each horizontal $G$ is a linear interpolation between $iY$ and $F(iY)$:
\[G(X+iY) = (1-X)iY + XF(iY).\]
Because of the hypothesis \cref{eq:one}, we get that $G$ extends to a neighborhood of its domain to a quasiconformal map that commutes with $F$, see \cite{Shi} for details.\footnote{They use the constant $1/4$ instead of $1/10$.}

\begin{lemma}\label{lem:gdesc}
The map $G$ descends to a quasiconformal homeomorphism $\cal G$ from $\H/\Z$ to $\cal U$:
\[
\begin{tikzcd}
B \arrow[d] \arrow[r, "G"] & U' \arrow[d] \\
\H/\Z \arrow[r, "\cal G"'] & \cal U
\end{tikzcd}
\]
commutes
where $B=[0,1]\times (0,+\infty)$, the vertical arrows are passing modulo $\Z$ and modulo $F$ and $U' = \ell\cup U\cup F(\ell) = \ov U \setminus [0,F(0)]$ (so that $\cal U = U'/F$).
\end{lemma}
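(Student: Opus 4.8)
\textbf{Plan of proof for \Cref{lem:gdesc}.}

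The plan is to verify that the map $G$ is compatible with the two quotient operations on its source and target, and then invoke the fact (already recorded just before the statement, following \cite{Shi}) that $G$ extends to a quasiconformal map commuting with $F$ in order to transfer the quasiconformality downstairs. First I would observe that on the source side $B=[0,1]\times(0,+\infty)$ the two vertical edges $\{0\}\times(0,+\infty)$ and $\{1\}\times(0,+\infty)$ are identified by $T:Z\mapsto Z+1$ when passing to $\H/\Z$, and the map $G$ sends them respectively to $\ell=i(0,+\infty)$ and to $F(\ell)$. On the target side $U'=\ell\cup U\cup F(\ell)$, the quotient $\cal U=U'/F$ glues $\ell$ to $F(\ell)$ via $F$. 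So the claim that $\cal G$ is well defined amounts to checking the identity $G(X+1+iY)=F\bigl(G(X+iY)\bigr)$ whenever both sides make sense, i.e.\ that the diagram in the statement commutes: since $G$ is only literally defined on $\overline B$, what is needed is that $G(1+iY)=F(G(iY))$, namely $F(iY)=F(iY)$, which is the definition of $G$ on the right edge, together with the analogous compatibility for the quasiconformal extension of $G$ to a neighborhood.

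The key steps, in order, are: (1) record that $G:[0,1]\times(0,+\infty)\to\ell\cup U\cup F(\ell)$ is a homeomorphism — injectivity follows from \cref{eq:one}, which forces the segments $[iY,F(iY)]$ to be pairwise disjoint (the ``horizontals'' foliate $\overline U$), surjectivity is the description of $\overline U$ as the region bounded by $\ell\cup[0,F(0)]\cup F(\ell)$, and continuity of $G$ and of $G^{-1}$ is clear from the explicit linear-interpolation formula; (2) note that by \cref{eq:one} again $G$ extends to a quasiconformal homeomorphism of a neighborhood of $\overline B$ onto a neighborhood of $U'$ that intertwines $T$ on the left with $F$ near $\ell$ and $F(\ell)$ (this is exactly the statement imported from \cite{Shi}, with their constant $1/4$ replaced by $1/10$); (3) deduce that $G$ descends: the quotient map $B\to\H/\Z$ is the restriction of the covering $\C\to\C/\Z$ and $U'\to\cal U$ is by construction the quotient by the equivalence relation generated by $W\sim F(W)$ for $W\in\ell$, and step (2) says precisely that $G$ carries the first relation to the second, so there is a unique continuous $\cal G:\H/\Z\to\cal U$ making the square commute, and it is a bijection because $G$ is; (4) conclude that $\cal G$ is quasiconformal: quasiconformality is a local property invariant under the local biholomorphisms given by the two covering/quotient projections (here one should recall that $\cal U$ was endowed with the complex structure inherited from a neighborhood of $\ell\cup U\cup F(\ell)$, so the projection $U'\to\cal U$ is a local biholomorphism away from the gluing curve and an analytic gluing along it), and the same dilatation bound for $G$ from step (2) holds for $\cal G$.

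The main obstacle is step (2)–(4), the passage through the analytic gluing along $\ell$: one must be careful that the complex structure on $\cal U$ near the image of $\ell$ is the one induced by the extension of $G$ (equivalently, by a neighborhood of $\ell$ in $\C$ together with $F$), and that the quasiconformal dilatation of $G$ near $\ell$ — which a priori is controlled by \cref{eq:one} only on $\ell$ itself, not in a full neighborhood — does descend without deterioration. This is precisely the technical point handled in \cite{Shi} under the hypothesis \cref{eq:one} (their version with $1/4$), so here it suffices to cite it; I would not reprove it. The remaining verifications — injectivity of $G$, commutativity of the square, and the fact that a homeomorphism which is quasiconformal in charts is quasiconformal — are routine.
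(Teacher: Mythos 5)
Your proposal is correct and reaches the same conclusion, but it handles the one delicate step — quasiconformality across the gluing curve $\ell$ — by a different route than the paper. You import from \cite{Shi} the full statement that $G$ extends to a quasiconformal homeomorphism of a neighborhood of $\overline B$ intertwining $T$ with $F$, and then note that, near the gluing circle, $\cal G$ read in the natural charts for $\H/\Z$ and $\cal U$ is literally a restriction of that extension (for $X\in(-\eps,0)$, both $\tilde G(X+iY)$ and $F^{-1}\circ G\circ T(X+iY)$ give the same point near $\ell$, by the intertwining). That does prove the lemma. The paper's proof instead avoids invoking the full \cite{Shi} extension: it observes that the explicit interpolation formula for $G$ extends trivially slightly past $X=0$ and $X=1$ with the same Beltrami bound, writes $\cal G$ in the chart near $i\R$ as two $5/4$-quasiconformal pieces ($G$ on one side, $F^{-1}\circ G\circ T$ on the other) that agree along $i\R$, and invokes Rickman's quasiconformal gluing lemma (Lemma~1.20 in \cite{BF}). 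So the paper trades a citation to \cite{Shi} for a citation to a standard gluing lemma plus elementary checks; your version is shorter once \cite{Shi} is granted, the paper's is more self-contained. One small imprecision in your write-up: you say the dilatation of $G$ is ``controlled by \cref{eq:one} only on $\ell$ itself, not in a full neighborhood'' — in fact the Beltrami computation for $G$ controls the dilatation throughout $B$ and slightly beyond, and it is precisely this that lets the paper dispense with the full \cite{Shi} extension in the chart argument.
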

\begin{proof}
There is a unique map $\cal G$ satisfying the diagram: the only place in $\H/\Z$ where the projection to $\H/\Z$ has not a unique preimage is the imaginary axis. There, an element has two antecedents: $iY$ and $iY+1$ for some $Y>0$. But then $\cal G$ is uniquely defined there because $G(iY+1) = F(G(iY))$.

In the domain, on can use $(0,1)\times(0,+\infty)$ as a fist chart for (a subset of) $\H/\Z$ and $U$ as a chart in the range. In this chart, $\cal G$ is a $C^1$ diffeomorphism with Beltrami derivative of norm at most $a:=1/9$ so it is $K$-qc with $K = (1+a)/(1-a) = 5/4$.
A neighborhood of $\H\cap i\R$ can be used as a second chart in the domain, and we use a neighborhood of $\ell$ in the range. There, $\cal G$ is given by two $C^1$ diffeomorphisms patched along $i\R$: $G$ on the right of $i\R$ and $T\circ F^{-1}\circ G$ on the left. These two diffeos extend slightly across $i\R$ and coincide there, and are both $5/4$-quasiconformal.
By classical quasiconformal gluing lemmas,\footnote{See for instance Rickman's lemma, Lemma~1.20 in \cite{BF} with $\Phi = $ the patched map, $\phi = G$ and $C=i\R$ or the closed right half plane intersected with an open neighborhood of $i\R$. In this particular case where we glue along a straight line, there are simpler proofs.} $\cal G$ is $5/4$-quasiconformal in this second chart too.
\end{proof}

Since $\cal U$ is quasiconformally equivalent to $\H/\Z$ it is also conformally equivalent to $\H/\Z$. The composition of this isomorphism with the natural projection $\ell \cup U\cup F(\ell) \to\cal U'$ has a lift by the natural projection $\H\to\H/\Z$. Call it
$L: \ell\cup U\cup F(\ell)\to \H$. It has a \emph{holomorphic} extension to a neighborhood $V$ of $\ell\cup U\cup F(\ell)$ such that
$$\qquad L(F(W))=L(W)+1$$
holds in a neighborhood of $\ell$.
We can assume that $V$ is simply connected by taking a restriction if necessary, but we do not assume that it contains $[0,F(0)]$.
However, by Caratheodory's theorem the map $L$ indeed has a \emph{continuous} extension to $[0,F(0)]$ that we call $\bar L$.
By adding a real constant to $L$, we can furthermore assume that 
\[\bar L(0)=0.\]

\begin{figure}
\begin{tikzpicture}
\node at (0,0) {\includegraphics{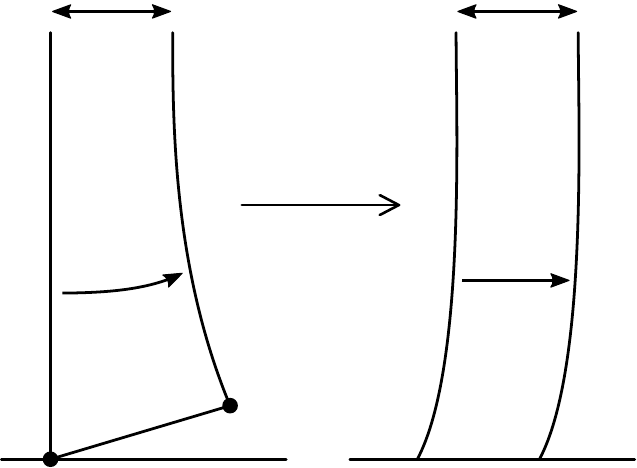}};
\node at (0,.7) {$L$};
\node at (-2.2,2.6) {$\approx 1$};
\node at (2,2.6) {$1$};
\node at (-2.8,-2.6) {$0$};
\node at (-.4,-1.6) {$F(0)$};
\node at (-2.05,1) {$U$};
\node at (-2,-0.95) {$F$};
\node at (-3,0) {$\ell$};
\node at (2,-.1) {$T$};
\end{tikzpicture}
\caption{Gluing.}
\label{fig:L}
\end{figure}

\begin{lemma}\label{lem:gluev3}
Assume that \cref{eq:one} holds.
Then $\forall\, W,W'\in \ell\cup U$
\[ \frac{|\Im (W-W')|-C_1}{A}\leq |\Im \left( L(W)- L(W')\right)| \leq A |\im (W-W')|+C_1\]
for two universal constants $A>1$, $C_1>0$.\\
If $|\Im(L(W)-L(W'))|>C_1$ then $\Im(L(W)-L(W'))$ and $\Im(W-W')$ have the same sign.
\end{lemma}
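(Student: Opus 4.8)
The plan is to exploit the quasiconformal homeomorphism $\mathcal G$ from \Cref{lem:gdesc} together with its lift $L$, and to track how the imaginary part of a point is distorted when transported through $G$, then through the passage modulo $F$, and finally through the conformal change of coordinates. The key observation is that $L$ is, up to a bounded error, the inverse of $\mathcal G$ lifted to $\H$; so an estimate on $L$ amounts to an estimate on $\mathcal G^{-1}$, and quasiconformal maps between half-infinite cylinders distort the ``height'' coordinate in a controlled two-sided way. More concretely, recall the explicit homeomorphism $G(X+iY) = (1-X)iY + X F(iY)$ on the strip $B=[0,1]\times(0,+\infty)$, which by \cref{eq:one} satisfies $|G(X+iY) - iY| \le 11/10$ for all $X\in[0,1]$, $Y>0$; hence for $W = G(X+iY) \in \ell\cup U\cup F(\ell)$ we have $|\Im W - Y| \le 11/10$. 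Thus if I can show $|\Im(L(W)-L(W')) - (Y - Y')|$ is bounded by a universal constant times $|Y-Y'|$ plus a universal additive constant, for $W = G(X+iY)$, $W' = G(X'+iY')$, then combined with $|\Im W - Y|, |\Im W' - Y'| \le 11/10$ I get exactly the claimed two-sided inequality.

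The heart of the matter is therefore a statement about the conformal map $\Psi : \H/\Z \to \mathcal U$, or rather its lift, composed with the quasiconformal chart. First I would set $\mathcal G = \Psi \circ \chi$ where $\chi : \H/\Z \to \H/\Z$ is $5/4$-quasiconformal (this is the content of \Cref{lem:gdesc}: $\mathcal G$ itself is $5/4$-qc, and we may as well absorb the conformal $\Psi$ into the picture by noting $L \circ G$ descends to $\mathcal G^{-1}$ followed by the projection, up to the normalization $\bar L(0)=0$). Lifting to the universal covers, $L \circ G : B \to \H$ is a $5/4$-quasiconformal embedding commuting with the unit translation in the following sense: $L(G(X+1+iY)) = L(G(X+iY)) + 1$. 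Now I invoke the standard fact (a Grötzsch-type / modulus-of-quadrilateral estimate, or equivalently the well-known bi-Lipschitz behaviour of the ``vertical coordinate'' under a qc self-map of a half-infinite cylinder) that such a map satisfies
\[
\frac{Y-Y'}{A_0} - C_0 \le \Im\big(L(G(X+iY)) - L(G(X'+iY'))\big) \le A_0 (Y-Y') + C_0
\]
for $Y > Y'$, with universal $A_0 > 1$ and $C_0 > 0$ depending only on the dilatation $5/4$. This is where I would cite the quadrilateral-modulus inequalities (e.g.\ from \cite{BF}): the region between the levels $\{\Im = Y'\}$ and $\{\Im = Y\}$ in $B$ has modulus comparable to $Y - Y'$, its image has modulus comparable to the imaginary-part spread of the image, and qc maps distort moduli by at most the factor $K = 5/4$; monotonicity of the height coordinate in $Y$ handles the sign claim. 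The additive constant $C_0$ is unavoidable because the comparison of moduli to actual heights is only asymptotic near the ends and the strip has finite ``width''.

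Unwinding the composition then gives the lemma: for $W, W' \in \ell \cup U$ write $W = G(X+iY)$, $W' = G(X'+iY')$ with $Y, Y' > 0$; applying the displayed inequality and then replacing $Y$ by $\Im W$ (error $\le 11/10$) and $Y'$ by $\Im W'$ (error $\le 11/10$) produces
\[
\frac{|\Im(W-W')| - C_1}{A} \le |\Im(L(W)-L(W'))| \le A\,|\Im(W-W')| + C_1
\]
with, say, $A = 2A_0$ and $C_1 = C_0 + 3A_0$, and the sign statement follows once $|\Im(L(W)-L(W'))| > C_1$ forces $Y > Y'$ (resp.\ $Y < Y'$), hence $\Im(W-W')$ of the same sign. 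The main obstacle, and the step deserving the most care, is the two-sided modulus estimate for the qc self-map of the half-cylinder: one must be careful that $G$ is only a homeomorphism of the \emph{closed} strip with a genuinely two-dimensional image (not a graph over the $Y$-axis), so the comparison of $\Im W$ with $Y$ is needed precisely to convert the clean statement about $B$ into a statement about $\ell\cup U$; and one must make sure the constants genuinely depend only on the dilatation bound $5/4$ and not on $F$, which is exactly why \cref{eq:one} was imposed with a fixed numerical constant. The rest is bookkeeping of additive constants.
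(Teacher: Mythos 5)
Your proposal is correct and follows essentially the same route as the paper: decompose $L = (L\circ G)\circ G^{-1}$, observe that $L\circ G$ is a lift of a bounded-dilatation quasiconformal self-map of the cylinder $\H/\Z$, obtain a two-sided additive-plus-multiplicative estimate on the height coordinate for that map, and transfer it back to $L$ via the explicit bound $|G(W)-W|\le 1/10$. The only real divergence is in how that key height-distortion estimate for the qc self-map is established: the paper extends $L\circ G$ by reflection to a qc self-map of the full cylinder $\C/\Z$ and invokes compactness of the family of normalized $K$-qc self-maps (which also directly gives the sign claim, via the bounded total height of the image of a horizontal circle), whereas you invoke Gr\"otzsch/modulus-of-ring estimates directly on the half-cylinder — this works, though as you note it needs care to relate the modulus of the image annulus to actual vertical extent, and the reflection trick is the cleanest way to sidestep that. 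One small slip in the setup: $L\circ G$ does not descend to $\mathcal G^{-1}$; it is a lift of $\chi = \Psi^{-1}\circ\mathcal G$, exactly as your displayed inequality then (correctly) treats it.
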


\begin{lemma}\label{lem:gluev4}
Let $\delta\leq 1/10$ and assume $F$ is a function as above such that
\begin{equation}\label{eq:glueAss}
(\forall W\in\ell) \quad |F(W) - W - 1| \leq \delta \quad\text{and}\quad |F'(W)-1| \leq \delta.
\end{equation}
Then for all $M>0$,
\[\sup_{|\im Z| < M} |L(W)-W| \leq B(M)\delta \]
and
\[\sup_{|\im Z| < M} |L^{-1}(W)-W| \leq B(M)\delta \]
for some continuous non-decreasing function $B(M)>0$ that is universal (i.e.\ independent of $F$ and $\delta$).
\end{lemma}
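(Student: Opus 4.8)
The plan is to recognise $L$ as the composition of the explicit interpolation map of \Cref{lem:gdesc} with a \emph{conformal} change of coordinates, and then to estimate how far a normalised quasiconformal map with a small Beltrami coefficient can be from the identity.

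First I would estimate the complex dilatation of the interpolation map $G(X+iY)=iY+X\bigl(F(iY)-iY\bigr)$ directly from \cref{eq:glueAss}. Writing $v(Y)=F(iY)-iY$, one computes $\partial_W G=\frac12(1+v-iXv')$ and $\partial_{\bar W}G=\frac12(v-1+iXv')$; since $|v-1|\le\delta$, $|v'|=|F'(iY)-1|\le\delta$ and $0\le X\le1$, this gives $|\partial_{\bar W}G|\le\delta$, $|\partial_W G|\ge1-\delta$, hence a Beltrami coefficient of sup norm $\le\delta/(1-\delta)$. The patching near $i\R$ performed in the proof of \Cref{lem:gdesc} replaces $G$ on one side by $T\circ F^{-1}\circ G$, whose dilatation is also $O(\delta)$, so the quasiconformal homeomorphism $\cal G\colon\H/\Z\to\cal U$ has Beltrami coefficient $\mu$ with $\|\mu\|_\infty\le C_0\delta$ for a universal $C_0$. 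The same formula yields $|G(W)-W|\le\delta$, and solving $G(W)=Z$ coordinatewise (using $\Re v\approx1$ and $|\Im v|\le\delta$) gives $|G^{-1}(Z)-Z|\le C_1\delta$ on every horizontal strip, with $C_1$ universal.

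Next I would set $\hat\psi:=L\circ G$. Since $\bar L(0)=0$, $L\circ F=L+1$ near $\ell$ and $G(iY+1)=F(iY)$, the map $\hat\psi$ commutes with $T$ and thus descends to a quasiconformal self-homeomorphism $\psi$ of $\H/\Z$ --- namely $\psi=u\circ\cal G$, where $u\colon\cal U\to\H/\Z$ is the uniformisation that $L$ lifts --- whose complex dilatation is again $\mu$ because $u$ is conformal; and $L=\hat\psi\circ G^{-1}$. Identifying $\H/\Z$ with $\D^*$ through $\zeta=E(W)$ and filling in the puncture, $\psi$ extends to a quasiconformal self-homeomorphism $\ov\psi$ of $\ov\D$ which necessarily fixes $0$ (the puncture is topologically distinguished), still has $\|\mu\|_\infty\le C_0\delta$, and whose only remaining freedom (a rotation of $\D$, i.e.\ a real translation of the $W$-coordinate) is killed by $\bar L(0)=0$: in the $\zeta$-coordinate this says $\ov\psi$ sends the prime end corresponding to $W=0$ to the one corresponding to $\zeta=1$. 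Reflecting $\ov\psi$ across $\partial\D$ by $\zeta\mapsto1/\bar\zeta$ produces a quasiconformal self-homeomorphism of $\wh\C$ fixing $0$, $1$ and $\infty$ with Beltrami coefficient of norm $\le C_0\delta$; by the standard Lipschitz dependence of the normalised solution of the Beltrami equation on its coefficient (Ahlfors--Bers; or the Cauchy-transform representation, see \cite{BF}) we get $\sup_{\ov\D}|\ov\psi(\zeta)-\zeta|\le C_2\delta$ with $C_2$ universal. Transporting this back through $\zeta=E(W)$ on the annulus $\{0<\Im W<M\}$, where $|\zeta|\ge e^{-2\pi M}$, and using $|\log(1+w)|\le2|w|$ for $|w|\le\frac12$, one obtains $|\hat\psi(W)-W|\le B_0(M)\delta$ for a universal continuous non-decreasing $B_0$ (of order $e^{2\pi M}$), and likewise for $\hat\psi^{-1}$; enlarging $B_0$ if necessary covers the range where $C_2\delta e^{2\pi M}>\frac12$, since there the trivial bound $|L(W)-W|\le C(1+M)$ coming from \Cref{lem:gluev3} and $\bar L(0)=0$ is stronger. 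Finally $L=\hat\psi\circ G^{-1}$ and $L^{-1}=G\circ\hat\psi^{-1}$ together with $|G^{\pm1}(W)-W|\le C_1\delta$ give the two stated estimates with $B(M)=B_0(M+1)+C_1$.

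The main obstacle is not a single computation but the bookkeeping at the two ``ends'' of the strip: on the one hand, one must check that $\hat\psi$ genuinely descends and that $\bar L(0)=0$ removes exactly the last rotational ambiguity, so that $\psi$ is \emph{the} normalised solution and not just \emph{a} quasiconformal map with the right dilatation; on the other hand, one needs the $O(\delta)$ estimate \emph{uniformly up to the ideal boundary} $[0,F(0)]$ rather than merely on compact subsets of $\cal U$, which is precisely why the surface must be compactified and the map reflected --- a bare normal-families argument would only give $o(1)$. One also has to allow for the fact that the domain $V$ of $L$ may protrude slightly beyond $\ell\cup U\cup F(\ell)$, but that only costs one extra unit of height in $M$, already absorbed in the passage from $B_0(M+1)$.
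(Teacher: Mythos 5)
Your proof is correct and shares the paper's key starting point --- the decomposition $L = \hat\psi \circ G^{-1}$ with $\hat\psi = L\circ G$, the computation of the complex dilatation of the interpolation map $G$ from \cref{eq:glueAss}, and the reflection across $\R$ (equivalently $\partial\D$ in $\zeta$-coordinates) to get a normalised quasiconformal self-map of $\wh\C$ --- but you then diverge at the crucial step. The paper embeds $\hat\psi$ in a holomorphic motion on $\C/\Z$ obtained by straightening $t\,BG$ for $|t|<\tfrac{9}{10\delta}$, normalises so that $0$ is fixed throughout, and applies the Schwarz--Pick lemma to the holomorphic disk $t\mapsto h_t(W)$ into $(\C/\Z)\setminus\{0\}$: the resulting hyperbolic-distance bound $d_1(W,\hat\psi(W))\lesssim\delta$ converts to a Euclidean bound with conversion factor $\sim\Im W$ near the end of the cylinder, yielding $B(M)\sim M$. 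You instead invoke the Lipschitz dependence of the normalised solution of the Beltrami equation (Ahlfors--Bers / Cauchy-transform representation), obtaining a uniform $O(\delta)$ bound in the $\zeta$-coordinate and transporting it back with the crude conversion factor $|\zeta|^{-1}\sim e^{2\pi M}$, giving $B(M)\sim e^{2\pi M}$. Both routes are legitimate; the paper's is sharper and, once the holomorphic motion is set up, arguably cleaner, while yours is more self-contained if one regards Ahlfors--Bers as the black box. The lemma imposes no restriction on the growth of $B(M)$, so the weaker constant does not matter. (A remark: if you had used the first-order variation formula $\dot f[\mu](z)=-\frac{z(z-1)}{\pi}\iint \mu(\zeta)\,\frac{dA(\zeta)}{\zeta(\zeta-1)(\zeta-z)}$ and observed $\dot f[\mu](z)=O(|z|\log(1/|z|))$ near $0$ rather than a uniform $O(\delta)$ bound on $\ov\D$, you would also recover $B(M)\sim M$.)

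Two small imprecisions are worth flagging. First, the ``trivial bound'' $|L(W)-W|\le C(1+M)$ you use to absorb the regime $C_2\delta e^{2\pi M}>1/2$ does not follow from \Cref{lem:gluev3} alone, which only controls $\Im(L(W)-L(W'))$; for the real part you need the observation that $\ov\psi$ ranges over a compact family of $5/4$-quasiconformal self-maps of $\ov\D$ fixing $0$ and $1$, hence has a uniform modulus of continuity, which bounds the oscillation of $\arg(\ov\psi(\zeta)/\zeta)$. Second, the Lipschitz estimate $\sup_{\ov\D}|f^\mu(\zeta)-\zeta|\le C_2\|\mu\|_\infty$ is true for $\|\mu\|_\infty$ bounded away from $1$ and does follow from the analytic dependence of $f^\mu$ on $\mu$ and the explicit kernel for the variation, but you should spell out that your $\mu$ is supported on all of $\wh\C$ and that the kernel still yields a uniformly convergent integral; the references you cite prove continuity (or analytic dependence) but do not always isolate a global Lipschitz bound on $\ov\D$ in that form.
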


\proof[Proof of \Cref{lem:gluev3,lem:gluev4}]
The usual approach in this situation (see for instance \cite{Shi} in another context) is to decompose $L$ as $L = H \circ G^{-1}$ where $G$ was defined above.
Write $W=X+iY$.
Let $r(W) = F(W)-W-1$ so that the hypotheses are $|r(iY)|,|r'(iY)|\leq \delta\leq 1/10$.
Then $G(W) = W + Xr(iY)$ whence
\begin{equation}\label{eq:GWW}
|G(W)-W|\leq \delta \leq 1/10.
\end{equation}
A computation of the Beltrami differential $BG$ of $G$ at $X+iY$ gives
\[ \frac{r(iY) - Xr'(iY)}{2+r(iY)+Xr'(iY)} d\ov{z}/dz\]
whose absolute value is  $\leq 2\delta / (2-2\delta) \leq 10\delta/9 \leq 1/9$.

Recall that $L$ induces a conformal isomorphism $\cal L$ between $\cal U$ and $\H/\Z$.
The map $\cal H = \cal L\circ \cal G$ is thus quasiconformal from $\H/\Z$ to itself and has a lift $H := L \circ G$.
Its Beltrami differential $\mu=BH$ coincides with $BG$.
Hence the essential supremum of $BH$ is the same as that of $BG$, hence $\leq 10\delta/9\leq 1/9$.

A quasiconformal map such as $H$ possesses a reflection extensions $\tilde H$ across $\R$ that is quasiconformal and commutes with $z\mapsto\bar z$. The Beltrami differential of $\tilde H$ is a extension of $\mu$ by reflection.

For \Cref{lem:gluev3}, note that the set of quasiconformal maps from $\C/\Z$ to itself with a given bound on the dilatation ratio of its differential, forms a compact family (modulo automorphisms of $\C/\Z$).
In particular, a cylinder of height one has an image of height that is bounded over the family.
It follows that $\tilde H$, satisfies
\[ (\forall W,W'\in \C) \quad |\Im(\tilde H(W)-\tilde H(W'))| \leq A|\Im(W-W')|+C\]
for some universal $A>1$, $C>0$, and the same estimate holds with $\tilde H$ replaced by $\tilde H^{-1}$ because it is also quasiconformal with the same supremum of Beltrami differential.
Hence
\begin{equation}\label{eq:estHt}
 (|\Im(W-W')|-C)/A\leq |\Im(\tilde H(W)-\tilde H(W'))| \leq A|\Im(W-W')|+C.
\end{equation}
The map $H$ being a restriction of $\tilde H$, it satisfies the same inequalities.
Using this and the bound $|G(W)-W|\leq 1/10$, the first claim of the lemma follows with $C_1=C+2A/10$.

Moreover, given $W$, the image of the horizontal closed line in $\C/\Z$ through $W$ is a closed curve winding around $\C/\Z$ and of total height at most $C$.
It follows that if $W'$ is another point such that 
$\Im W'>\Im W$, then $\Im H(W')>\Im H (W)-C$ and if $\Im W'<\Im W$
then $\Im H (W')<\Im H(W)+C$.
So if $|\Im(H(W)-H(W'))|>C$ then $\Im(H(W)-H(W'))$ and $\Im(W'-W)$ have the same sign.
A similar argument proves that if $|\Im(G(W)-G(W'))|>2/10$ then $\Im(G(W)-G(W'))$ and $\Im(W'-W)$ have the same sign.
Now if $|\Im(L(W)-L(W'))|>C+2A/10$ then by the right hand side of \cref{eq:estHt} we get $|\Im(G^{-1}(W)-G^{-1}(W'))|>2/10$ and by the discussion above $\Im(L(W)-L(W'))$ has the same sign as $\Im(G(W)-G(W'))$ which has the same sign as $\Im(W'-W)$.
We proved the last claim of \Cref{lem:gluev3}.

To prove \Cref{lem:gluev4}, note that the estimate on $G$ is global, so we have $|G(W)-W|\leq \delta$ for all $W\in\dom G$ and  $|G^{-1}(W) -W|\leq \delta$ for all $W$ in $\dom G^{-1}$.
We have $L = H\circ G^{-1}$ (first case) and $L^{-1} = G\circ H^{-1}$ (second case) so we now look for an estimate on $H$ that is valid on the cylinder $0<\Im W<M$ for $H^{-1}$ in the second case and on the cylinder $0<\Im W< M+1/10$ for $H$ in the first case.

We can for instance proceed as follows. Normalize $H$ by adding a real constant so that it fixes $0$: i.e. we consider the map $H-H(0)$.
It can be embeded in a holomorphic motion on $\C/\Z$ by straightening $t\times BG$, with $t$ a complex number of modulus small enough so that the essential supremum of $t\times BG$ is $<1$.
By the study above, $|t|<\frac{9}{10\delta}$ is enough.
We normalize the motion by requiring that $0$ stays fixed. 
Consider the hyperbolic distance $d_1$ on the Riemann surface $\C/\Z\setminus\{0\}$ and $d_0$ on the Riemann surface $\D$. The holomorphic motion implies $d_1(W,H(W))\leq d_0(0,\frac{10}{9}\delta t)$. The result follows.
\hfill$\square$\par\medskip

We are now going to give two statements under the supplementary assumption that $F$ extends to $\H$ into a holomorphic funtion satisfying \cref{eq:one} for all $W\in \H$:
\begin{equation}\label{eq:oneH}
(\forall W\in\H) \quad |F(W) - W - 1| \leq 1/10 \quad\text{and}\quad |F'(W)-1| \leq 1/10.
\end{equation}
Since $\H$ is convex, the condition on $F'$ implies that $F$ is \emph{injective} on $\H$.

\Cref{eq:oneH} has a visual consequence: a portion of orbit $F(W),\ldots,F^k(W)$ belongs to a cone of apex $W$, with a central axis which is horizontal, opening to the right and with half opening angle $\theta=\arcsin 1/10$.

\begin{lemma}\label{lem:fdf2}
Assume $F$ satisfies \cref{eq:oneH}.
Then an $F$-orbit can pass at most once in $\ell\cup U$.
\end{lemma}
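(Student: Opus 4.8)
The plan is to exploit the cone-trapping consequence of \cref{eq:oneH}: since $|F(W)-W-1|\le 1/10$ on all of $\H$, every forward orbit moves essentially one unit to the right per step, so it is ``monotone in the real direction'' in a uniform way. More precisely, I would first record the elementary observation that a portion of orbit $F(W),F^2(W),\dots,F^k(W)$ stays inside the cone $\{Z : \Re(Z-W)\ge 0,\ |\Im(Z-W)|\le (\tan\theta)\Re(Z-W)\}$ with $\theta=\arcsin(1/10)$ (each step increases the real part by at least $1-1/10=9/10>0$, and the accumulated imaginary drift is at most $(1/10)k$ while the accumulated real drift is at least $(9/10)k$). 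In particular $\Re F^j(W)$ is strictly increasing in $j$, and $\Re F^j(W)\ge \Re W + (9/10)j$.

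Next I would describe the region $\ell\cup U$ in terms of its real part. Recall $U$ is the open strip bounded by $\ell$, the segment $[0,F(0)]$, and $F(\ell)$. By \cref{eq:one}, $F(iY)=iY+1+r(iY)$ with $|r|\le 1/10$, so the right boundary curve $F(\ell)$ has real part in $[9/10,11/10]$ while the left boundary $\ell$ has real part $0$. Hence every point of $\ell\cup U$ has real part $<11/10$; more usefully, I want an upper bound on how far right a point of $\ell\cup U$ can be as a function of its height, but the crude bound $\Re W < 11/10$ on all of $\ell\cup U$ already suffices. So $\ell\cup U\subset\{W:\Re W<11/10\}$, while $\ell\cup U$ also contains points of $\ell$ with $\Re W=0$.

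The key step is then the following: suppose $F^{j}(W_0)\in\ell\cup U$ and $F^{k}(W_0)\in\ell\cup U$ with $j<k$. Write $W=F^j(W_0)$, so $F^{k-j}(W)\in\ell\cup U$ with $k-j\ge 1$. Since $\Re$ increases by at least $9/10$ at each iterate, $\Re F^{k-j}(W)\ge \Re W + 9/10\cdot(k-j)\ge \Re W + 9/10$. But $W\in\ell\cup U$ means $W$ lies to the right of the axis $\ell$, i.e.\ $\Re W\ge 0$; hmm, this gives $\Re F^{k-j}(W)\ge 9/10$, which is not yet a contradiction with $\Re(\cdot)<11/10$. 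So I need to be slightly more careful: I would instead use that $W\in\ell\cup U$ forces $W$ to lie \emph{strictly left} of the curve $F(\ell)$ (it is on the $\ell$-side of $U$, or on $\ell$ itself), whereas $F(W)$ lies on or to the right of $F(\ell)$ by one application of $F$ to a point left of $\ell$... more cleanly: $\ell\cup U$ lies in the ``fundamental domain'' strictly between $\ell$ and $F(\ell)$, and applying $F$ maps this region to the region strictly between $F(\ell)$ and $F^2(\ell)$, which by the monotone-real-part estimate is disjoint from $\ell\cup U$ (its points have $\Re\ge$ the minimum of $\Re$ on $F(\ell)$ which exceeds the maximum of $\Re$ on $\ell\cup U$, both controlled by $1\pm 1/10$). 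Then $F^{k-j}(W)$, being $F^{k-j-1}$ applied to $F(W)$ which already lies to the right of $F(\ell)$, has real part at least $(9/10) + \min_{F(\ell)}\Re \ge 9/10 + 9/10 > 11/10$ when $k-j\ge 1$ — wait, one needs $k-j\ge 1$ and then $\Re F(W)\ge 9/10$ already contradicts nothing, so one genuinely needs $\Re F(W)>11/10$; but $F(W) = W+1+(\text{error})$ with $\Re W\ge 0$ gives $\Re F(W)\ge 9/10$ only. The honest fix: $W\in\ell\cup U$ and the portion of $U$ with small height has $\Re W$ close to $0$, so I should instead track that the orbit, once it leaves $\ell\cup U$ through $F(\ell)$, can never come back because $\ell\cup U$ together with the segment $[0,F(0)]$ bounds a topological disk and the orbit is forced to move monotonically to the right. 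The cleanest formulation: project everything to $\H/\Z$ via $L$; then $\cal U = U'/F$ is a once-punctured disk, $L$ conjugates $F$ to $Z\mapsto Z+1$ on a neighborhood of $\ell$, and $\ell\cup U$ maps to a fundamental annulus of $Z\mapsto Z+1$; since the $F$-orbit projects to an orbit of $Z\mapsto Z+1$ in $\H/\Z$, which visits each fundamental domain at most once, the orbit of $F$ visits $\ell\cup U$ at most once.

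The main obstacle, and where I would concentrate the real work, is making the ``orbit is monotone / fundamental-domain'' argument rigorous \emph{globally on $\H$}, i.e.\ showing the $F$-orbit cannot re-enter $\ell\cup U$ after exiting. The cone estimate from \cref{eq:oneH} controls real parts, but one must combine it with the precise geometry of $U$ sitting between $\ell$ and $F(\ell)$ — in particular ruling out that the orbit wraps around and re-enters from the imaginary-axis side near the cusp; here upper semicontinuity of the real coordinate along orbits, plus the Jordan curve $\ell\cup[0,F(0)]\cup F(\ell)$ separating the plane, is what forces a point that has left $\overline U$ on the $F(\ell)$ side to stay in $\{\Re > \text{(max of $\Re$ over $\ell\cup U$)}\}$ forever. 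I would therefore first prove the clean separation estimate $\sup_{\ell\cup U}\Re W < \inf_{F(\ell)}\Re W + 0$ fails numerically, and instead argue via $L$: transfer to $\H/\Z$ where the statement becomes the trivial fact that orbits of the unit translation meet a fundamental domain once; the transfer is legitimate because by \Cref{lem:gdesc} and the construction of $L$, the map $L$ is a biholomorphism of a neighborhood of $\ell\cup U\cup F(\ell)$ onto a neighborhood of $\ell\cup(\text{strip }0<\Re Z<1)\cup(\ell+1)$ in $\H$ conjugating $F$ to $T$, so an orbit of $F$ that met $\ell\cup U$ twice would descend to a $T$-orbit in $\H/\Z$ meeting the image annulus twice, which is impossible since that image is a genuine fundamental domain for $T$ acting on $\H/\Z$ (an annulus of modulus realized once). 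That reduces everything to \Cref{lem:gdesc} and Caratheodory extension, both already available.
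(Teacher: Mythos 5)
The final route you settle on, transferring the orbit to $\H/\Z$ via $L$, is circular. The map $L$ is constructed as a biholomorphism only on (a thin neighborhood of) $U' = \ell\cup U\cup F(\ell)$, and the relation $L\circ F = T\circ L$ holds only near $\ell$, where both sides land back in $U'$. A typical $F$-orbit leaves $U'$ after one step, so there is no $T$-orbit to ``descend to'': one would need $L$ extended to $\bigcup_k F^k(\ell\cup U)$. The paper does construct such an extension, in the proof of \Cref{lem:Lpun}, but the well-definedness of that extension is justified there precisely by invoking \Cref{lem:fdf2} --- the very lemma you are proving. So ``reduces everything to \Cref{lem:gdesc} and Caratheodory extension'' does not hold; the orbit of the unit translation visiting a fundamental domain once is true, but you cannot get the $F$-orbit into that picture without already knowing the conclusion.

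Your earlier, abandoned attempts were in fact on the right track, and you correctly diagnosed why the crude version fails: a constant separation $\sup_{\ell\cup U}\Re < \inf_{F(\ell)}\Re$ is numerically false (you only get $\Re F(W)\geq 9/10$, not $>11/10$). The paper's fix is to replace the constant by a height-dependent curve. Extend $F(\ell)$ downward by a vertical ray from $F(0)$ to get a curve $\gamma_1 = \{g(Y)+iY\}$ which is a graph over the imaginary axis with $|g'|\leq \tan\theta = 1/\sqrt{99}$ above the corner (here $|F'-1|\leq 1/10$ is used). Set $V=\{0\leq X<g(Y)\}$ and $V_+=\{X\geq g(Y)\}$; then $\ell\cup U\subset V$ and one shows $F(V)\subset V_+$ and $F(V_+)\subset V_+$. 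The second inclusion is the cone property against the slope bound: $\Re(F(W)-W)\geq 9/10$ while $g$ drifts by at most $(1/\sqrt{99})\cdot(1/10)$ over the accompanying vertical displacement, leaving ample margin. The first inclusion follows from tracking the image of a horizontal segment from $\ell$ to $W$ under $F$. Since $V\cap V_+=\emptyset$, the orbit enters $V_+$ after one iterate and stays there, hence never returns to $\ell\cup U$. This is the ``monotone to the right'' idea you were circling; the missing ingredient was keying the separation to the sloped boundary curve $\gamma_1$ rather than a constant, and keeping the argument entirely in the $Z$-plane rather than passing to the glued cylinder.
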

\begin{proof}
Consider $\gamma_0=i\R$ and
let us extend $F(\ell)$ by a vertical half line going down and stemming from $F(0)$, into  a curve that we call $\gamma_1$.
Since $|F'-1|\leq 1/10$, it follows that $\gamma_1$ can be parameterized by the imaginary part: 
$\gamma_1 = \setof{g(Y)+iY}{Y\in\R}$ for some continuous function $g:\R\to\R$ with at most one non-smooth point, corresponding to the corner $F(0)$.
This function is constant below this point. Above, it satisfies $|g'(Y)|\leq \tan\theta = 1/\sqrt{99}$.
The set $V:\,0\leq X<g(Y)$ is well-defined and contains $\ell\cup U$. It is disjoint from the set $V_+$ of equation $X\geq g(Y)$. It is enough to check that $F(V_+)\subset V_+$ and $F(V)\subset V_+$.
The first inclusion follows from the cone property mentionned above and the bound on $|g'|$. 
For the second inclusion, first note that if $Z\notin \H$ then $F(Z)$ is not defined, so we now assume that $Z\in V\cap \H$.
Link $Z\in U$ with the unique point $Z'\in\ell$ of same imaginary part by the horizontal segment $[Z',Z]$. Then $[Z',Z]\subset \H$ and the image by $F$ of $[Z',Z]$ will not deviate from the horizontal by more than $\arcsin \frac{1}{10}$ and links a point of $F(\ell)$ with $F(Z)$. We conclude using the bound on $|g'|$.
\end{proof}

In the next lemma we use one of Koebe's distortion theorems, which we copy here from \cite{Po} (Theorem~1.3 page~9, equation~(15)): for a univalent map $f$ from $\D$ to $\C$:
\[|f'(0)| \frac{|z|}{(1+|z|)^2} \leq |f(z)-f(0)|\leq |f'(0)| \frac{|z|}{(1-|z|)^2}.\]
Consequence: for a univalent map $f:B(a,R) \to\C$:
\begin{equation}\label{eq:koe}
\frac{|f(z)-f(a)|}{z-a} \left(1-\frac{|z-a|}{R}\right)^2 \leq |f'(a)| \leq \frac{|f(z)-f(a)|}{z-a} \left(1+\frac{|z-a|}{R}\right)^2
\end{equation}

\begin{lemma}\label{lem:Lpun}
Assume that $F$ extends to $\H$ into a holomorphic funtion satisfying \cref{eq:oneH}, and that moreover $F(W)-W-1\tend 0$ as $\im W\to +\infty$.
Then $L'(W)\tend 1$ as $\Im W\to +\infty$ within $\ell\cup U$.
\end{lemma}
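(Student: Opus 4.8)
The plan is to show that the glued Riemann surface $\cal U$ looks, near its puncture, more and more like a straight half-cylinder, which forces the uniformizing lift $L$ to have derivative tending to $1$. First I would exploit the hypothesis $F(W)-W-1\to 0$: for every $\eps>0$ there is a height $M_\eps$ such that $\cref{eq:glueAss}$ holds with $\delta=\eps$ on the part of $\ell$ above $M_\eps$. The geometry of the strip $U$ truncated above height $M_\eps$ is then $\eps$-close to the standard straight strip, uniformly; the key quantitative input is \Cref{lem:gluev4}, which gives $\sup_{|\Im Z|<M}|L(W)-W|\leq B(M)\eps$ and the same for $L^{-1}$, on the region $\ell\cup U$ (after the normalization $\bar L(0)=0$; here one must be a little careful, since changing $\delta$ changes the normalizing additive constant, but the difference of two such constants is controlled by \Cref{lem:gluev3} applied at a fixed reference height, so this is harmless).

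Next I would localize. Fix a large height $h_0$. For $W\in\ell\cup U$ with $\Im W$ very large, I want to estimate $L'(W)$. The map $L$ is univalent on a simply connected neighbourhood $V$ of $\ell\cup U$, but the inner radius of $V$ may shrink near the endpoint of $\ell$; however, near a point $W$ of large imaginary part, $\ell\cup U$ contains a disk $B(W,\rho)$ of definite radius $\rho$ — indeed $\rho$ can be taken close to $\min(1,\text{width of the strip})\geq$ a universal constant, using $\cref{eq:oneH}$ and the cone/width estimates from the proof of \Cref{lem:fdf2}. So $L$ is univalent on $B(W,\rho)$ for a universal $\rho>0$ once $\Im W$ is large enough. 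Now apply the Koebe estimate $\cref{eq:koe}$ on $B(W,\rho)$ with, say, $z$ the point $W+i$ (or $W+\rho/2$): it bounds $|L'(W)|$ both above and below by $\frac{|L(z)-L(W)|}{z-W}(1\mp \frac{|z-W|}{\rho})^{-2}$-type quantities. By the $C^0$ estimate above, $L(z)-L(W)$ is within $2B(M)\eps$ of $z-W$, so $\frac{|L(z)-L(W)|}{z-W}$ is within $O(\eps)$ of $1$; hence $|L'(W)|$ is within $O(\eps)+O(\rho)$ of $1$. This is not yet $L'(W)\to 1$ because of the irreducible $O(\rho)$ Koebe loss from using a disk of bounded radius.

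To remove the $O(\rho)$ loss I would iterate the functional equation $L\circ F = T\circ L$, equivalently $L'(F(W))F'(W)=L'(W)$: since $F'(W)\to 1$ and one may replace $W$ by $F^{-k}(W)$ (staying in $\ell\cup U$ and still at large imaginary part, by \Cref{lem:fdf2} an orbit meets $\ell\cup U$ at most once, but one can instead run the orbit forward: $L'(F^k(W)) = L'(W)/\prod_{j<k}F'(F^j(W))$, and as $\Im W\to\infty$ the product tends to $1$ uniformly in $k$ because $\sum_j |F'(F^j W)-1|$ is controlled by the decay of $F'-1$ at infinity). Actually the cleanest route: on the \emph{standard} half-cylinder model the statement is trivial, and the difference between $L$ and the identity, measured in the hyperbolic-distance sense used at the end of the proof of \Cref{lem:gluev4}, is bounded by the hyperbolic distance in $\D$ from $0$ to a point of modulus $O(\eps)$; letting $\eps\to0$ as $\Im W\to\infty$ gives that $L$ converges to the identity in $C^0_{loc}$ around points of large imaginary part, and then Weyl/Cauchy estimates on shrinking-but-still-definite disks, combined with the $F$-equivariance to upgrade "definite radius" to "any radius", yield $L'(W)\to1$.

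The main obstacle I expect is precisely this last upgrade: the Koebe/Cauchy estimate on a disk of only \emph{bounded} radius gives $L'(W)=1+O(1)$, not $1+o(1)$, so one genuinely needs to use either (i) the increasingly good ($\eps\to0$) nature of the $C^0$ bound together with a disk of \emph{fixed} radius — which does give $o(1)$, since the $O(\eps)$ term dominates and the Koebe $O(\rho)$ term is present but the derivative bound $|L'(W)|\le \frac{|L(z)-L(W)|}{|z-W|}(1+|z-W|/\rho)^2$ with $|z-W|\to 0$ as we take $z\to W$... wait, we cannot take $z\to W$ while keeping $z$ at controlled distance in the $C^0$ estimate — the resolution is to note the $C^0$ estimate holds on the whole disk $B(W,\rho)$, so $\sup_{B(W,\rho)}|L-\text{id}+c|\le B(M)\eps$ for the relevant additive constant $c$, and then Cauchy's estimate for $L'-1$ on $B(W,\rho/2)$ gives $|L'(W)-1|\le 2B(M)\eps/\rho$, which is $o(1)$ as $\Im W\to\infty$ for \emph{fixed} $\rho$. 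So the real care needed is just to verify that $\ell\cup U$ contains a disk of universal radius $\rho$ around each point of large imaginary part (from the width bounds in \Cref{lem:fdf2}'s proof) and that $V\supset B(W,\rho)$ so that the Cauchy estimate is legitimate; once that is in hand the conclusion follows. I would therefore structure the write-up as: (1) reduce to $\delta=\eps$ above height $M_\eps$ and fix the normalization; (2) produce the universal disk $B(W,\rho)\subset \ell\cup U$ for $\Im W\ge M_\eps$; (3) apply \Cref{lem:gluev4} to get $\sup_{B(W,\rho)}|L-\text{id}-c_W|\le B\eps$; (4) Cauchy estimate to get $|L'(W)-1|\le 2B\eps/\rho$; (5) let $\Im W\to\infty$.
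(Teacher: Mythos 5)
Your final plan (steps 1--5) hinges on step~(3): applying \Cref{lem:gluev4} with $\delta=\eps$ to the given $L$. But the hypothesis \cref{eq:glueAss} of that lemma requires $|F(W)-W-1|\le\delta$ and $|F'(W)-1|\le\delta$ on \emph{all} of $\ell$, not merely above height $M_\eps$; under the hypotheses of \Cref{lem:Lpun} the uniform bound is $\delta=1/10$, so the lemma only gives $|L(W)-W|\le B(M)/10$, which is $O(1)$, not $O(\eps)$. Re-running the gluing on the strip truncated at height $M_\eps$ produces a different uniformizing lift $L_{M_\eps}$, and you cannot reduce the comparison $L$ versus $L_{M_\eps}$ to an additive constant: the map $L\circ L_{M_\eps}^{-1}$ is a $T$-equivariant univalent map from $\H$ onto $\H$ minus a compact set, and estimating its derivative near infinity is the original problem in disguise. \Cref{lem:gluev3} only controls imaginary parts, not derivatives, so it cannot close this gap. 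Step~(2) is also not right as stated: there is no universal $\rho$ with $B(W,\rho)\subset\ell\cup U$ since points of $\ell\cup U$ lie arbitrarily close to the boundary curves $\ell$ and $F(\ell)$, and the domain $V$ on which $L$ extends holomorphically is only ``a neighborhood of $\ell\cup U\cup F(\ell)$'' whose thickness is not controlled.

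The idea you need is the one you mention in passing but never implement: use the $F$-equivariance $L\circ F=T\circ L$ to \emph{extend} $L$ to $\bigcup_{k\in\Z}F^k(\ell\cup U)$. By \Cref{lem:fdf2} and the cone property this extension is well-defined, injective and holomorphic, and its domain contains a cone whose horizontal width at height $\Im W$ grows like $\Im W\tan\theta$. Thus around each $W\in\ell\cup U$ of large imaginary part, the extended $L$ is univalent on a disk of radius $R\asymp\Im W\to\infty$. Now apply \cref{eq:koe} with $a=W$ and $z=W'=F(W)$: you have $L(W')-L(W)=1$ exactly, $W'-W\to1$ by the hypothesis $F(W)-W-1\to0$, and $R\to\infty$, so the Koebe distortion factors $(1\pm|W'-W|/R)^{\pm2}$ tend to $1$, giving $L'(W)\to1$. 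This replaces your ``fixed disk, $\eps\to0$ Cauchy bound'' (which is not available) with ``growing disk, fixed comparison point'', which is what the hypotheses actually furnish and is the route the paper takes.
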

\begin{proof}
The method is from \cite{Y}, pages 28--29, simplified here for our setting.
It is a standard trick in this field to extend $L$ to $\bigcup F^k(\ell\cup U)$, $k\in \Z$ so that the relation $L\circ F (W) = T\circ L (W)$ holds whenever both sides are defined. The extension is well-defined because of \Cref{lem:fdf2} and is holomorphic.
Since the set $\ell\cup U$ contains the set defined by the equations $\Im W> 1/10$ and $0\leq\Re(W)<9/10$, it follows by the cone property that the domain of definition of the extension contains the set of equation
\[\Im W>\frac{2}{10}\text{ and }-(\im W-\frac{2}{10})\tan \theta < \Re W <  \frac{9}{10}+(\im W-\frac{2}{10})\tan\theta\]
where $\theta = \arcsin 1/10$. The margin $2/10$ is here to ensure that when $F^k(W)\in U$, we have $\Im F^k(W) > 1/10$ and hence $F^k(W)$ lies above the lower segment $[0,F(0)]$ of $\partial U$.
The extension is injective: indeed if $L(W_1)=L(W_2)$ then consider $k_1,k_2\in\Z$ such that $W'_1:=F^{k_1}(W_1) \in \ell \cup U$ and $W'_2:=F^{k_2}(W_2)\in\ell\cup U$. Then $L(W_1)=L(W'_1)-k_1$ and $L(W_2)=L(W'_2)-k_2$. The points $L(W'_1)$ and $L(W'_2)$ both belong to $L(\ell\cup U)$ which is a fundamental domain for the action of $T$ on $\H$ hence $k_1=k_2$ and thus $L(W_1')=L(W_2')$. So $W'_1 = W'_2$ i.e.\ $F^{k_1}(W_1) = F^{k_2}(W_2)$ and using $k_1=k_2$ again and the injectivity of $F$ we get $W_1=W_2$.
Now for $W\in \ell\cup U$ with $\im W$ big, the extension $L$ is defined on a big disk centered on $W$, injective and satisfies $L(W')=L(W)+1$ where $W' := F(W)$ is close to $W+1$, by hypothesis.
The conclusion then follows using \cref{eq:koe} with $a=W$ and $z=W'$.
\end{proof}

Weaker assumptions are enough and stronger conclusions hold. We only proved here statements that are sufficient to get the main lemma.

\subsection{Iterations and rescalings}\label{sub:iter}

Let $T_\alpha(Z)=Z+\alpha$ and $T=T_1$.
For a holomorphic map $F$ commuting with $T$, defined on a domain containing an upper half plane and satisfying $F(Z) = Z+\alpha +o(1)$ as $\Im Z\to+\infty$ we call $\alpha$ its \emph{rotation number} and let it be denoted by $\alpha(F)$.
The following properties are elementary and stated without proof.

Let $\alpha\in\R$ and assume that
\[\|F-T_{\alpha(F)}\|_\infty<K|\alpha(F)-\alpha|.\]
\begin{itemize}
\item Then for $k>0$: $\alpha(F^k)=k\alpha(F)$ and 
\[\|F^k-T_{\alpha(F^k)}\|_\infty< K|\alpha(F^k)-k\alpha|.\]
\item
Let $a\in\C$, $b\in\R$ with $b>0$, write $\lambda(Z)=bZ+a$ and
$G=\lambda\circ F\circ \lambda^{-1}$. Then $\alpha(G) = b\alpha(F)$ and
\[\|G-T_{\alpha(G)}\|_\infty< K|\alpha(G)-b\alpha|.\]
\end{itemize}

Assume instead that
\[\|F'-1\|_\infty < \exp(K|\alpha(F)-\alpha|)-1.\]
\begin{itemize}
\item Then
\[\|(F^k)'-1\|_\infty< \exp(K|\alpha(F^k)-k\alpha|)-1,\]
\item and
\[\|G'-1\|_\infty = \|F'-1\|_\infty.\]
\end{itemize}

\subsection{Reminder on continued fractions}\label{sub:remind}

We state a few classical properties for reference.
Let $a_0\in\Z$ and $a_n\in \N^*$ for $n>0$.
The notation $[a_0;a_1,\ldots,a_n] = a_0 + 1/(a_1+1/(\ldots+1/a_n))$ is often used with integers only but we will use it too with the last entry being a real number: $[a_0;a_1,\ldots,a_n,x] = a_0 + 1/(a_1+1/(\ldots+1/(a_n+1/x)))$.
If we write $p_n/q_n=[a_0;a_1,\ldots,a_n]$ in lowest terms (with $p_{-1}/q_{-1} = 1 / 0$) then
\[\alpha := [a_0;a_1,\ldots,a_n,x] = \frac{p_nx+p_{n-1}}{q_nx+q_{n-1}}\]
and conversely
\[x=-\frac{q_{n-1}\alpha-p_{n-1}}{q_{n}\alpha-p_n}.\]
Also, $p_{n-1}q_n-p_nq_{n-1} = (-1)^n$,
\[q_{n}\alpha-p_n = \frac{(-1)^n }{q_n x + q_{n-1}}.\]
and
\[q_{n-1}\alpha-p_{n-1} = \frac{(-1)^{n-1} }{q_n + q_{n-1} x^{-1}}\]
from which we can get the following classical inequality (shifting the index $n$):
\[|q_{n}\alpha-p_{n}|\leq \frac{1}{q_{n+1}}.\]
If 
\[\beta=[a_0;a_1,\ldots,a_n,y]\]
then 
\[\beta-\alpha = (-1)^{n+1}\frac{y-x}{(q_n x+q_{n-1})(q_n y+q_{n-1})}.\]
Also there are the famous induction relations, for $n\geq 1$:
\[p_{n}=a_n p_{n-1}+p_{n-2},\]
\[q_{n}=a_n q_{n-1}+q_{n-2}.\]

In this article we call continued fraction expansion the notation $[a_0;a_1,\ldots]$ where the sequence $a_n$ is finite or infinite and where $a_0\in\Z$ and $a_n\geq 1$ for $n\geq 1$.
If $\alpha\in\R\setminus\Q$ then it has only one continued fraction expansion and it is infinite.
If $\alpha\in\Q$ then it has exactly two continued fraction expansions: $[a_0;a_1,\ldots,a_s,1]$ and $[a_0;a_1,\ldots,a_s+1]$.

\subsection{Renormalization}\label{sub:direct}

\subsubsection{Foreword}
The renormalization procedure we describe here is a variant of what is usually done.

Consider a map $F\in \cal S(\alpha)$.
Usually a fundamental domain $U$ is defined, bounded by $\ell\cup[iy_0,F(iy_0)]\cup F(\ell)$ where $\ell$ is the vertical half line from $iy_0$ to $+i\infty$ and $y_0>0$ is a real chosen big enough to ensure good behaviour of the construction. Then a ``return'' map from $T^{-1}(\ell\cup U)$ to $\ell\cup U$ is defined. Conjugacy through the gluing basically defines the renormalization.

Usually this procedure is iterated a great number of times to give information on high iterates of the original map. 
Here, proximity to a rotation allows to bypass this and apply a one-step renormalization procedure directly to the high iterates.

\subsubsection{Construction}

Consider $F\in \cal S(\alpha)$. Let $p_{k-1}/q_{k-1}$, $p_k/q_k$ and $p_{k+1}/q_{k+1}$ be three successive convergents of $\alpha$ with $k\geq 0$.
Implicitly $\alpha\neq p_k/q_k$.
The construction described below depends on the choice of $k$ and will also depend on the choice of a positive constant $y_0$. In this article we call $k+1$ the \emph{order} of the renormalization.

Write 
\begin{eqnarray*}
J &=& T^{-p_{k-1}}\circ F^{q_{k-1}},\\
H &=& T^{-p_k}\circ F^{q_k} .
\end{eqnarray*}
The symbol $J$ refers to a \emph{jump} and $H$ to a \emph{hop}: indeed when $\Im Z$ is big enough, $J$ moves points by a bigger\footnote{There is one exception: $\alpha = m = [m-1; 1]$ for some $m\in\Z$ and we choose $k=0$. We then get $\beta'=-1$ and $\beta = 1$. This case is not necessary for our main result but all we state here holds for it too, except the claim that $|\beta|<|\beta'|$.} amount than $H$.
Let
\begin{align*}
\beta' &=\alpha(J) = q_{k-1}\alpha-p_{k-1},\\
\beta &= \alpha(H) = q_{k}\alpha-p_{k}.
\end{align*}
By the theory of continued fractions, $1/2\leq q_{k+1}|\beta|\leq 1$, hence $\beta\neq 0$ and $\beta$ and $\beta'$ both tend to $0$ as $k\to+\infty$.
Moreover the sign of $\beta$ alternates: it coincides with the sign of $(-1)^{k}$ and $\beta'$ has the opposite sign.

\begin{remark} In the particular case $k=0$, we have $p_0=a_0$, $q_0=1$ and by convention $p_{-1}=1$, $q_{-1}=0$ so $J=T^{-1}$ and $H = T^{-a_0}\circ F$. Then the construction is essentially the classical renormalization of \cite{Y}, and we call it order $1$ renormalization according to the convention above.
\end{remark}

Assume that we have identified a height $y_0$ such that the following statements hold:
\begin{itemize}
\item the domain of definition of $H$ contains $\Im Z>y_0$; as a consequence the domain of $J$ also does;
\item $\forall Z\text{ with }\Im Z>y_0$:
\begin{align*}
\qquad |H(Z)-Z-\beta|&\leq |\beta|/10\text{,}
& |H'(Z)-1|&\leq 1/10,
\\
\qquad |J(Z)-Z-\beta'|&\leq |\beta|/10
\text{,}\footnotemark
& |J'(Z)-1|&\leq 1/10
\end{align*}%
\end{itemize}
\footnotetext{This is not a typographic mistake: we want $\beta'$ on the left hand side of the inequality and $\beta$ on the right hand side.}
In particular if $\beta>0$ we have $\Re H(Z)>\Re Z$ and $\Re J(z) < \Re Z$, and if $\beta<0$ then it is the opposite: $\Re H(Z)< \Re Z$ and $\Re J(z) > \Re Z$.

\begin{remark}
There always exists such a height $y_0$,\footnote{Maps in $\cal S(\alpha)$ are close to $Z\mapsto Z+\alpha$ when $\im Z$ is big, see for instance \cite{Y} page~26.} and part of the work in further sections will be to get some control over it.
\end{remark}

\begin{notation}
Since the construction involves many changes of variables we adopt the following notation: if $Z\mapsto \lambda(Z)$ is a change of variable then instead of denoting the new variable $\lambda(Z)$ or $Z'$ or $W$ we may choose the notation $Z^\lambda$. We speak of the $Z^\lambda$-space, instead of the $W$-space or such. If a map acts on $Z^\lambda$-space we may choose to use the notation $F^\lambda$. Similarly a set in $Z^\lambda$-space may be denoted by $S^\lambda$.
\end{notation}

We now define the following change of variable $\lambda$:
\begin{align}\label{eq:lambda}
\begin{split}
&\text{If }\beta>0\text{ let }\lambda(Z) = (Z-iy_0)/\beta.\\
&\text{If }\beta<0\text{ let }\lambda(Z) =\ov{(Z-iy_0)}/\beta.
\end{split}
\end{align}

By \Cref{sub:iter} the map $H^\lambda = \lambda \circ H\circ \lambda^{-1}$ then satisfies \cref{eq:one} stated in \Cref{sub:glue}, with $H^\lambda$ in place of $F$.
The sets $\ell$ and $U$ constructed there will be denoted here by $\ell^\lambda$ and $U^\lambda$ because they live in $Z^\lambda$-space, so that we can call $\ell$ and $U$ their images by $\lambda^{-1}$, which live in $Z$-space.
Then
\[\ell = i(y_0,+\infty) \text{ and } \partial U = \ell\cup[iy_0,H(iy_0)]\cup H(\ell)\]
If $\beta<0$ then $U$ sits on the left of $\ell$, otherwise it is on the right.

A portion of $H$-orbit $Z$, $H(Z)$, \ldots, $H^n(Z)$ that stays above $y_0$ (except maybe at the last iteration) can hit $\ell\cup U$ at most once: this follows from \Cref{lem:fdf2} applied to the restriction to $\H$ of $H^{\lambda}$. 

We now define a return map $R$, defined on a subset of $\ell\cup U$ and taking values on $\ell\cup U$. For $Z\in\ell\cup U$:
\begin{itemize}
\item If there is some $n\geq 0$ such that $Z$ and
$J(Z)$, $H(J(Z))$, $H^2(J(Z))$, \ldots,  $H^{n-1}(J(Z))$ are all above $y_0$ and $H^n(J(Z))\in \ell\cup U$ then we let $R(Z)=H^n(J(Z))$. Below we temporarily write $n(Z)$ this unique value of $n$.
\item Otherwise we let $R$ be undefined at $Z$.
\end{itemize}

Recall that $\ov{\cal U}$ refers to the quotient $\ov U / H$. It is a surface with boundary and its ``interior'' is denoted by $\cal U$ and is a Riemann surface. We have $\cal U = (\ell\cup U\cup H(\ell))/H$ and the canonical projection is a bijection from $\ell \cup U$ to $\cal U$.

\begin{lemma}
The map $R$ is injective. Passing to the quotient $\cal U$ (i.e.\ conjugating by the canonical projection $\ell\cup U \to \cal U$), $R$ becomes continuous and better: is the restriction of a holomorphic map to $\dom R$.\footnote{By definition, holomorphic maps are defined on open sets. The domain of $R$ may fail to be open near points of $\ell$ in the quotient for subtle reasons in the definition of $R$.}
\end{lemma}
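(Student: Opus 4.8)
The plan is to establish the three assertions of the lemma in sequence: injectivity of $R$ on $\ell\cup U$, then continuity after passing to the quotient $\cal U$, and finally holomorphy. For injectivity, suppose $R(Z_1)=R(Z_2)$ with $Z_1,Z_2\in\dom R$, say $R(Z_i)=H^{n_i}(J(Z_i))$. Since $H$ and $J=T^{-p_{k-1}}\circ F^{q_{k-1}}$ are built from the injective map $F$ and translations, both are injective on their domains, so it suffices to show $n_1=n_2$; then $H^{n_1}(J(Z_1))=H^{n_2}(J(Z_2))$ forces $J(Z_1)=J(Z_2)$ hence $Z_1=Z_2$. To pin down $n_1=n_2$, I would pass to $Z^\lambda$-space where $H^\lambda$ satisfies \cref{eq:one} and invoke \Cref{lem:fdf2}: a single $H^\lambda$-orbit segment staying above the line $\{\Im=0\}$ (the image of $\{\Im Z=y_0\}$) meets $\ell^\lambda\cup U^\lambda$ at most once. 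If the common value $W:=R(Z_1)=R(Z_2)$ lies in $\ell^\lambda\cup U^\lambda$, and both $J(Z_i)$ feed into it by forward $H$-iteration through the upper region, then $J(Z_1)$ and $J(Z_2)$ lie on the same $H$-orbit through $W$; comparing which iterate of $H$ lands first in $\ell\cup U$ (namely $W$ itself) forces $n_1=n_2$, using that $\ell\cup U$ maps bijectively onto $\cal U$ and is a fundamental domain for $H$.

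For continuity of the descended map $\cal R$ on $\dom\cal R\subset\cal U$, the point is that locally $R$ is $Z\mapsto H^{n(Z)}(J(Z))$ with $n(Z)$ \emph{locally constant} on the interior of $\dom R$. Indeed if $Z_0\in\dom R$ with $n(Z_0)=n$ and $H^n(J(Z_0))$ lies in the open part of $\ell\cup U$ (i.e.\ in $U$, away from $\ell$ and the corner), then for $Z$ near $Z_0$ the points $J(Z),H(J(Z)),\dots,H^{n-1}(J(Z))$ stay above $y_0$ by openness, and $H^n(J(Z))$ stays in $U$, so $n(Z)=n$ there and $R=H^n\circ J$ is a composition of holomorphic maps. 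The only delicacy is near $\ell$: a point landing on $\ell$ can be identified in $\cal U$ with a point of $H(\ell)$, i.e.\ $H^n(J(Z))\in\ell$ gets glued to $H^{n+1}(J(Z))\in H(\ell)$, and on the quotient these two prescriptions agree precisely because the projection $\ell\cup U\cup H(\ell)\to\cal U$ identifies $W$ with $H(W)$. So working in the quotient $\cal U$ — where one uses, as a chart near the image of $\ell$, a neighborhood straddling $\ell$ obtained by gluing $U$ on one side to $H(U)$ (equivalently $T^{-1}$-translated copy) on the other — the formula for $\cal R$ becomes a single holomorphic expression, $n$ being constant on such a chart, and continuity follows from holomorphy.

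Therefore the holomorphy claim is really the substantive content, and it is proved by the same local analysis: on the interior of $\dom\cal R$, in suitable charts, $\cal R$ coincides with a finite composition $\pi\circ H^n\circ J\circ \pi^{-1}$ of holomorphic maps (here $\pi$ the canonical projection, restricted to a neighborhood where it is a biholomorphism onto its image), and the exponent $n$ is constant on each such chart by the openness argument above. One then observes that these local holomorphic pieces agree on overlaps — on an overlap the two exponents differ by the number of times one passes $\ell$, which is exactly compensated by the gluing relation $\pi\circ H=\pi$ on $\ell$ — so they patch to a single holomorphic map defined on an open neighborhood of $\dom\cal R$ in $\cal U$, of which $\cal R$ is the restriction.

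I expect the main obstacle to be the bookkeeping near $\ell$ in the quotient: making precise that ``landing on $\ell$'' versus ``landing on $H(\ell)$ after one more hop'' give the same point of $\cal U$, and hence that the locally-defined exponent $n$, though it jumps by $1$ as one crosses $\ell$, produces a well-defined and holomorphic map on $\cal U$. This is exactly the subtlety flagged in the footnote (the domain of $R$ may fail to be open near $\ell$ before quotienting). The cone property coming from \cref{eq:one}/\cref{eq:oneH} — that $H$-orbits drift to one side by a controlled angle — together with \Cref{lem:fdf2} is what guarantees orbits cannot sneak back into $\ell\cup U$, and is the geometric input that makes $n(Z)$ well-defined and the ``at most once'' statement rigorous; everything else is routine composition of holomorphic maps.
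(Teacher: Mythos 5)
Your overall structure — injectivity, then passage to the quotient, then holomorphy — matches the paper's, and your treatment of continuity and holomorphy via local constancy of the exponent $n(Z)$ and the gluing relation is essentially the argument given in the paper (the paper is a bit more explicit about the choice of the overlap neighbourhood $V$ and the conditions it must satisfy, but the idea is the same).

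However, your injectivity argument has a genuine gap. You correctly observe that $J(Z_1)$ and $J(Z_2)$ lie on the same $H$-orbit, say $J(Z_1)=H^k(J(Z_2))$ with $k\ge 0$, and you want to conclude $n_1=n_2$ (equivalently $k=0$) by ``comparing which iterate lands first in $\ell\cup U$.'' But the two orbit segments start at \emph{different} points of the same orbit (they are shifted by $k$ steps), so their first-hit times naturally differ: from $H^{n_1}(J(Z_1))=H^{n_2}(J(Z_2))$ and injectivity of $H$ one only gets $n_1+k=n_2$, not $n_1=n_2$. There is no contradiction from \Cref{lem:fdf2} applied to the orbit of $J(Z_2)$, since the intermediate point $J(Z_1)=H^k(J(Z_2))$ need not lie in $\ell\cup U$. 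The missing ingredient is the commutativity $H\circ J=J\circ H$ (both are powers of $F$ composed with integer translations, and $F$ commutes with $T$): from $J(Z_1)=H^k(J(Z_2))=J\bigl(H^k(Z_2)\bigr)$ and injectivity of $J$ one deduces $Z_1=H^k(Z_2)$. Now \Cref{lem:fdf2} applies to the orbit segment from $Z_2$ to $Z_1$, \emph{both} of which lie in $\ell\cup U$, and yields $k=0$. Without this transfer from the $J(Z_i)$-orbit to the $Z_i$-orbit you cannot close the argument.
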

\begin{proof}
Injectivity: Assume that $R(Z_1)=R(Z_2)$ for $Z_1,Z_2\in\ell\cup U$ with
$R(Z_1)=H^{n_1}(J(Z_1))$ and $R(Z_2)=H^{n_2}(J(Z_2))$.
Since $F$ is injective, it follows that $J$ and $H$ are injective too. Hence $J(Z_1)$ and $J(Z_2)$ belong to the same $H$-orbit. Up to permuting them, we can assume $J(Z_1)=H^k(J(Z_2))$ for some $k\geq 0$. Now
$H^k(J(Z_2)) =J(H^k(Z_2))$, whence $Z_1=H^k(Z_2)$ by injectivity of $J$. Using \Cref{lem:fdf2} we get $k=0$.

Continuity and holomorphy:\footnote{Let us give a heuristic justification. The map $J$ has an essentially well-defined action on the orbits of the restriction of $H$ to $\Im Z>y_0$ because these two maps commute; the quotient of the gluing can be seen as a subset of the space of orbits and its analytic structure is such that $Z\mapsto\text{orbit}(Z)$ is holomorphic.
Now there are some problems in this approach since the space of orbits is not that well defined, or does not have such a nice topological structure, and the action of $J$ is not so well defined.}
Let us use two charts for the analytic structure on $\cal U$. The first chart is the union of $U$ and of a small enough connected open neighborhood $V$ of $\ell$. Recall that $\ell$ does not contain its endpoint, so we can take a neighborhood of size that shrinks to $0$ near $y_0$. The second chart is $U\cup H(V)$. They can be glued along $V$ using $H$ to give a complex dimension one manifold $W/H$ where $W=V\cup U\cup H(V)$, and this manifold is canonically isomorphic to $\cal U$.
We choose $V$ small enough so that $V$ and $H(V)$ are disjoint, so that $V\subset U\cup \ell \cup H^{-1}(U)$ and so that orbits of the restriction of $H$ to $\Im Z>y_0$ intersect $W$ in exactly one point or in exactly two in consecutive iterates, one in V, the other in $H(V)$, and finally so that the image of $W\cap \dom J$ by $J$ does not intersect $H(V)$. Then 
for any representative $Z$ of a point of $(\dom R)/H$, for any $n\geq 0$ such that $H^n(J(Z))\in W$, then $H^n(J(Z))$ is a representative of $R(Z)$. The result follows.
\end{proof}

Recall that in \Cref{sub:glue} we associated a map $L$ from $\ell^\lambda\cup U^\lambda$ to $\H$ via gluing, uniformization, then unfolding. The composition $L\lambda$ goes from $\ell\cup U$ to $\H$. We omit the symbol ``$\circ$'' in $L\circ\lambda$ for more compact expressions.

\begin{lemma}\label{lem:hp}
The domain $\dom R$ contains every point in $\ell \cup U$ of high enough imaginary part.
The set $\Z+L\lambda(\dom R)$ contains some upper half plane.
\end{lemma}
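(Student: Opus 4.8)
The plan is to show that the return map $R$ is defined on a full horocyclic strip near the top of $\ell\cup U$, and then to transport this through the uniformization $L\lambda$. First I would quantify the geometry: fix $Z\in\ell\cup U$ with $\im Z$ large. Since $\beta$ and $\beta'$ have opposite signs, the jump $J$ moves $Z$ horizontally \emph{across} $\ell$ (to the side where $U$ does not sit), displacing it by roughly $\beta'$ while changing $\im$ by at most $|\beta|/10$; so $J(Z)$ lies above $y_0$ provided $\im Z > y_0 + |\beta|/10$, and $J(Z)$ is on the opposite side of $\ell$ from $U$. Then the $H$-orbit of $J(Z)$ advances by steps close to $\beta$ in the direction \emph{towards} $U$ (same side considerations as for the cone property), each step moving $\im$ by at most $|\beta|/10$, so by the cone/monotonicity estimates of \Cref{lem:fdf2} and the paragraph preceding it, after finitely many hops $H^n(J(Z))$ must cross back into $\ell\cup U$; crucially, because the real displacement per hop is comparable to $|\beta|$ and the total horizontal distance to recross is $O(1)$ in the $Z^\lambda$-rescaled picture (i.e.\ $O(|\beta|)$ in $Z$-space), the number of hops $n(Z)$ is bounded independently of $Z$ once $\im Z$ is large, and all intermediate iterates stay above $y_0$ by the same imaginary-part bookkeeping. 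This gives the first assertion: $\dom R\supset\{Z\in\ell\cup U:\im Z>y_1\}$ for some $y_1$.

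Next I would pass to $\cal U\cong\H/\Z$ and its unfolding. By \Cref{lem:gluev3} (applied to $H^\lambda$ in place of $F$), the map $L$ distorts imaginary parts by at most a bounded affine amount, and $\lambda$ is (up to conjugation by $Z\mapsto\bar Z$) just $Z\mapsto(Z-iy_0)/\beta$; hence $\im(L\lambda(Z))\to+\infty$ as $\im Z\to+\infty$ within $\ell\cup U$, uniformly. Combined with the previous paragraph, the set $L\lambda(\dom R)$ contains $\{W:\im W>y_2\}\cap L\lambda(\ell\cup U)$ for some $y_2$. Finally, $L\lambda(\ell\cup U)=L(\ell^\lambda\cup U^\lambda)$ is a fundamental domain for the $\Z$-translation action on $\H$ (this is how $L$ was constructed, via $\cal U=(\ell^\lambda\cup U^\lambda\cup H^\lambda(\ell^\lambda))/H^\lambda$ uniformized to $\H/\Z$ and unfolded), so translating it by $\Z$ tiles $\H$; therefore $\Z+L\lambda(\dom R)\supset\{W:\im W>y_2\}$, which is the desired upper half plane.

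The main obstacle I expect is the uniform bound on the number of hops $n(Z)$ and the verification that no intermediate iterate drops below $y_0$. In the rescaled $Z^\lambda$-coordinate the hop map $H^\lambda$ satisfies $|H^\lambda(W)-W-1|\le 1/10$ and the jump becomes a translation by $\beta'/\beta$ (with $|\beta'/\beta|=|\beta'|q_{k+1}/(q_{k+1}|\beta|)$, a quantity that by the continued-fraction relations is bounded above by $2$ and below by $1/2$ in absolute value when $k\ge 1$), so the picture is genuinely that of perturbing a parabolic-like situation: one jump sideways, then a bounded number of unit-length hops to return. Making this precise requires carefully tracking that the vertical line $\ell^\lambda$ starts at the \emph{real axis} in $Z^\lambda$-space (i.e.\ $\im Z>y_0$ becomes $\im Z^\lambda>0$), so that ``staying above $y_0$'' is exactly ``staying in $\H$'' for the rescaled maps, and then invoking \Cref{lem:fdf2} and the cone property verbatim; the only real content beyond bookkeeping is checking that starting from $\im Z^\lambda$ large forces the return before the orbit can wander down to $\im Z^\lambda=0$, which follows because each hop costs at most $1/10$ in imaginary part while only $O(1)$ hops are needed.
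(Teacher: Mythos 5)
Your plan matches the paper's proof in structure: for the first claim, one tracks a $J$‑then‑$H$-orbit starting from a high point of $\ell\cup U$, using that each hop advances the real part by at least $9|\beta|/10$ while dropping the imaginary part by at most $|\beta|/10$, so that only finitely many hops are needed and all intermediate iterates stay above $y_0$; for the second claim, one transfers this through $L\lambda$ using \Cref{lem:gluev3} and the fundamental-domain property of $L\lambda(\ell\cup U)$.

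Two remarks. First, the parenthetical claim that $|\beta'/\beta|$ is bounded above by $2$ (and below by $1/2$) is not correct: by the continued-fraction identities recalled in \Cref{sub:remind} one has $\beta'/\beta=-[a_{k+1};a_{k+2},\ldots]$, whose absolute value exceeds $a_{k+1}$ and can be arbitrarily large. This is harmless for \Cref{lem:hp} itself, because $\beta$ and $\beta'$ are fixed once $F$ and the renormalization order $k$ are fixed, so the number of hops and the accumulated loss of imaginary part are finite constants in any case; but the claimed universal bound would be wrong to rely on. Second, when you pass to $L\lambda$ you say $L$ ``distorts imaginary parts by at most a bounded affine amount'', but \Cref{lem:gluev3} controls $|\Im(L(W)-L(W'))|$; to go from a lower bound on $\Im L\lambda(Z)$ to a lower bound on $\Im Z$ you also need the sign-comparison part of that lemma (the paper applies it with $W'=\eps$ and lets $\eps\to 0$). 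These are small adjustments and do not change the overall approach, which is the same as the paper's.
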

\begin{proof} We assume $\beta>0$, the other case being similar.
The first time a portion of $H$-orbit passes from the left (strictly) to the right of the imaginary axis (inclusive), then a sufficient condition for the point to belong to $\ell\cup U$, is that its imaginary part be $>\max(\Im (iy_0), \Im H(iy_0))$.
Now for $Z\in \ell\cup U$ with high enough imaginary part, $J(Z)$ is defined, lies on the left of $i\R$ and $|\Re(J(Z))|$ is bounded, for instance by  $|\beta'|+\beta/10$.
Applying $H$ to a point above height $y_0$ increases the real part by at least $9\beta/10$ while the imaginary part changes by at most $\beta/10$.
From there the details are left to the reader.

For the second claim consider a point $W^{L}\in \H$ and let us translate it by an integer so that $W^{L}\in L(\ell^\lambda\cup U^\lambda)$, which is possible since $L(\ell^\lambda\cup U^\lambda)$ is a fundamental domain for $\H/\Z$.
Let us apply the right hand side of \Cref{lem:gluev3}, and the sign comparison claim of the same lemma, to $W:=L^{-1}(W^{L})$ and $W':=\eps$ and let $\eps\to 0$. Noting that $\Im L(W')\tend 0$ we get
$\Im W^L>C_1$ $\implies$ $\Im W \geq (\Im W^L-C_1)/A$ $\implies$ $\Im (L\lambda)^{-1}(W^L) \geq y_0+(\Im W^L-C_1)\beta/A$. The second claim follows from this and the first claim.
\end{proof}

Let
\[H_0 = \inf \setof{H>0}{\Im Z>H\implies Z\in\Z+L\lambda(\dom R)}\]
and let
\begin{equation}\label{eq:Lambda}
\Lambda(Z) = L\lambda(Z)  - iH_0.
\end{equation}
Given $Z^\Lambda \in \H$ there is a unique $k=k(Z^\Lambda)\in \Z$ such that $Z^\Lambda + k\in \Lambda(\ell\cup U)$. Let $Z = \Lambda^{-1}(Z^\Lambda+k)$.
By definition of $H_0$, we have $Z\in \dom R$, in particular there exists some (unique) $m = m(Z^\Lambda)\geq 0$ such that $H^m\circ J(Z) = R(Z) \in \ell\cup U$. Let
\begin{equation}\label{eq:RF}
\cal RF(Z^\Lambda)=\Lambda(R(Z))-m-k.
\end{equation}
If $\Im(Z^\Lambda)\in(-H_0,0]$ we choose to declare $\cal RF$ undefined at $Z^\Lambda$, even though the procedure above may reach fruition.
The map $\Lambda$ conjugates (a restriction of) $R$ to $\cal R F \bmod \Z$.

\begin{claim*}
The map $\cal RF$, which we defined on $\H$ and takes values in ``$\Im Z^\Lambda>-H_0$'', is continuous and better: holomorphic. 
\end{claim*}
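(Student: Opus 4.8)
\textit{The plan} is to realise $\cal RF$ as a lift, through the projection ``modulo $\Z$'', of the return map $R$ transported by the isomorphism that $\Lambda$ induces between $\cal U$ and a cylinder; holomorphy will then follow from that of $R$ on the quotient $\cal U$ (the preceding lemma on the holomorphy of $R$) together with covering space theory. Concretely, set $\cal C:=\big(\{Z^\Lambda\in\C:\Im Z^\Lambda>-H_0\}\big)/\Z$ and let $\pi_1\colon\{\Im Z^\Lambda>-H_0\}\to\cal C$ and $\pi_0\colon\H\to\H/\Z$ denote the canonical projections, where $\H/\Z$ is viewed inside $\cal C$ as the set of classes having a representative in $\H$. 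By the gluing construction of \Cref{sub:glue}, $L\lambda$ descends to a conformal isomorphism $\cal U=(\ell\cup U\cup H(\ell))/H\to\H/\Z$; hence $\Lambda=L\lambda-iH_0$ descends to a conformal isomorphism $\bar\Lambda\colon\cal U\to\cal C$ (surjectivity because $L\lambda(\ell\cup U)$ is a fundamental domain for $T$ acting on $\H$). Moreover, by the definition of $H_0$ and by \Cref{lem:hp}, every $Z\in\ell\cup U$ with $\Im L\lambda(Z)>H_0$ lies in $\dom R$, so $\bar\Lambda^{-1}(\H/\Z)\subset(\dom R)/H$; on this set $R$ coincides with the holomorphic map extending it provided by the preceding lemma, so $R$ is holomorphic there.

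\textit{The identity.} Fix $Z^\Lambda\in\H$ and let $k=k(Z^\Lambda)$, $Z=\Lambda^{-1}(Z^\Lambda+k)$ and $m=m(Z^\Lambda)$ be as in the definition of $\cal RF$, so that $R(Z)=H^m(J(Z))\in\ell\cup U$ and $\cal RF(Z^\Lambda)=\Lambda(R(Z))-m-k$. Since $\Lambda(Z)=Z^\Lambda+k$ we have $\bar\Lambda^{-1}(\pi_0(Z^\Lambda))=[Z]_H$, while on the quotient $[R(Z)]_H=R([Z]_H)$ and $\bar\Lambda([R(Z)]_H)=\pi_1(\Lambda(R(Z)))$; as $m+k\in\Z$ this yields
\[\pi_1\circ\cal RF=\bar\Lambda\circ R\circ\bar\Lambda^{-1}\circ\pi_0\qquad\text{on }\H.\]
The right-hand side is holomorphic from $\H$ to $\cal C$ by the first paragraph. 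Thus $\cal RF$ is a single-valued section of the covering $\pi_1$ over a holomorphic map — the integer corrections $-m-k$ being exactly what makes this section single-valued — and it remains only to prove that $\cal RF$ is \emph{continuous} in order to conclude that it is holomorphic.

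\textit{Continuity, and the main obstacle.} Fix $Z_0^\Lambda\in\H$. The map $\Lambda$ extends holomorphically and injectively to a neighbourhood of $\ell\cup U\cup H(\ell)$, still satisfying $\Lambda\circ H=T\circ\Lambda$. Replacing in the definition of $\cal RF$ the canonical $k$ by the fixed integer $k':=k(Z_0^\Lambda)$ (legitimate for $Z^\Lambda$ in a small neighbourhood $N$ of $Z_0^\Lambda$, since $Z_0^\Lambda+k'$ lies in the open image of this extension), one checks, using $\Lambda\circ H=T\circ\Lambda$, the commutation $H\circ J=J\circ H$, and \Cref{lem:fdf2} (an $H$-orbit meets $\ell\cup U$ at most once), that the value $\Lambda(R(Z))-m-k$ is unchanged and that the integer $m$ can be taken constant on $N$, equal to the unique integer with $H^{m}\big(J(\Lambda^{-1}(Z^\Lambda+k'))\big)\in\ell\cup U$. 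Hence on $N$
\[\cal RF(Z^\Lambda)=\Lambda\big(H^{m}\big(J\big(\Lambda^{-1}(Z^\Lambda+k')\big)\big)\big)-m-k',\]
a composition of holomorphic maps, so $\cal RF$ is continuous, hence holomorphic, near $Z_0^\Lambda$. I expect the reorganisation of the pointwise choices $k(Z^\Lambda),m(Z^\Lambda)$ into this locally holomorphic formula to be the only delicate point of the argument; the sole place where genuine care is needed is near the seam $\ell$ in the quotient, where $\dom R$ may fail to be open (cf.\ the preceding lemma), but \Cref{lem:hp} keeps us well inside $\dom R$ throughout the region $\Im Z^\Lambda>0$ that is used here.
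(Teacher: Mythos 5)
Your global framing via covering spaces (the identity $\pi_1\circ\cal RF=\bar\Lambda\circ R\circ\bar\Lambda^{-1}\circ\pi_0$) is a nice conceptual addition to the paper's argument, and your final paragraph is, in spirit, the same local computation the paper carries out. But there is a genuine gap in that last step.

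You assert that, once the pointwise $k(Z^\Lambda)$ has been replaced by the frozen value $k'=k(Z_0^\Lambda)$, ``the integer $m$ can be taken constant on $N$, equal to the unique integer with $H^{m}\big(J(\Lambda^{-1}(Z^\Lambda+k'))\big)\in\ell\cup U$.'' This is false in general. When $R(Z_0)=H^{m_0}(J(\Lambda^{-1}(Z_0^\Lambda+k')))$ lands exactly on the seam $\ell$ (which can happen for $Z_0^\Lambda$ in the interior of $\Lambda(\ell\cup U)$, so freezing $k$ does not rule it out), nearby orbits may fall just short of $\ell$ on the closed side, in which case the true $m(Z^\Lambda)$ equals $m_0+1$ rather than $m_0$. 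The formula you propose is then a priori \emph{not} the defining formula for $\cal RF$; one must explicitly use the functional equation $\tilde\Lambda\circ H=T\circ\tilde\Lambda$ to check that
\[
\tilde\Lambda\big(H^{m_0+1}(J(Z))\big)-(m_0+1)-k' \;=\; \tilde\Lambda\big(H^{m_0}(J(Z))\big)-m_0-k',
\]
so that the frozen-index expression agrees with the definition on both sides of the seam. This is exactly the ``$\delta=0$ or $1$'' discussion in the paper's proof, which your sketch skips. Your appeal to \Cref{lem:hp} is a red herring here: that lemma keeps $Z$ inside $\dom R$, but the obstruction is about where $R(Z)$ lands, not about whether $R(Z)$ is defined. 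Also note that your opening covering-space identity does not give continuity of $\cal RF$ for free — a set-theoretic lift of a holomorphic map through $\pi_1$ need not be continuous unless one checks it pointwise, which is precisely the local verification where the seam case must be handled.
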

\begin{proof}
Indeed consider a holomorphic extension $\tilde L$ of $L$ to a neighborhood of ${U'}^\lambda := \lambda(U')$ with $U'=\ell\cup U\cup H(\ell)$, which satisfies
$\tilde L(H^\lambda(Z^\lambda))=\tilde  L(Z^\lambda)+1$
for $Z^\lambda$ in a neighborhood of $\ell^\lambda:=\lambda(\ell)$ (see the beginning of \Cref{sub:glue}) and let $\tilde \Lambda = \tilde L\lambda-iH_0$.
Then
\begin{equation}\label{eq:funcLtilde}
\tilde \Lambda(H(Z))=\tilde \Lambda(Z)+1
\end{equation}
holds in a neighborhood of $\ell$.
Consider $k$ and $m$ as in \cref{eq:RF}.
It is enough to check that in a neighbourhood of any point $Z_0^\Lambda\in\H$, the formula \[\cal RF(Z^\Lambda)=\tilde\Lambda\circ H^m\circ J\circ \tilde\Lambda^{-1}(Z^\Lambda+k)-m_0-k_0\] is locally valid, with $m_0 = m(Z_0^\Lambda)$ and $k_0=(Z_0^\Lambda)$.
If nearby $Z^\Lambda$ have a different value of $k$ in \cref{eq:RF}, this means the initial $Z^\Lambda$ belongs to $\ell^\Lambda :=\Lambda(\ell)$, $m_0>0$ and the nearby values of $Z^\Lambda$ have a value of $k$ that equals $k_0$ or $k_0+1$. In the latter case we can use \cref{eq:funcLtilde} and get $\Lambda^{-1}(Z^\Lambda+k(Z^\Lambda)) = H(\tilde\Lambda^{-1}(Z^\Lambda+k_0))$.
In both cases the following holds locally:
$\cal RF(Z^\Lambda) = \Lambda \circ H^{m+k-k_0} \circ J \circ \tilde \Lambda^{-1}(Z^\Lambda+k_0) -m-k$, which we rewrite
\[ \cal RF(Z^\Lambda) = \Lambda \circ H^{m_0+\delta} \circ J \circ \tilde \Lambda^{-1}(Z^\Lambda+k_0) -m_0-k_0 -\delta \]
with $\delta=(m-m_0)+(k-k_0)$.
Similarly if local values of $\delta$ differ, then $H^{m_0}\circ J(\Lambda^{-1}(Z_0^\lambda+k_0)) \in \ell$ and $\delta = 0$ or $1$. Again, we can use \cref{eq:funcLtilde}.
\end{proof}

The map $\cal RF$ commutes with $T$ and satisfies 
\begin{equation}\label{eq:rnRF}
\cal RF(Z^\Lambda) = Z^\Lambda  + \alpha' + o(1) 
\end{equation}
as $\im Z^\Lambda \to +\infty$, 
where
\[\alpha'= \alpha(\cal RF) = \frac{\beta'}{\beta} = \frac{q_{k-1}\alpha-p_{k-1}}{q_k\alpha-p_k} = - [a_{k+1};a_{k+2},a_{k+3},\ldots].\]

\Cref{eq:rnRF} follows from $L'(W)$ tending to $1$ as $\Im W\to+\infty$ by \Cref{lem:Lpun}, while the domain $\ov U$ of $L$ has a bounded projection to the real axis.
Indeed $J$ moves points by essentially $\beta'$, $\lambda$ is a translation followed by a rescaling\footnote{Whatever the sign of $\beta$ is, we have $\Re \lambda(Z) = (\Re Z)/\beta$ and $\Im \lambda (Z) = (\Im Z-y_0)/|\beta| $.} by $1/\beta$ and in the definition of $\cal R$, we compensated the effect of $k$ and $m$.
See also \cite{Y} where a finer estimate on $L$ is given.
As an alternative one can use the invariance of the rotation number of holomorphic maps by homeomorphisms (\cite{N,CGP}).

\begin{lemma}\label{lem:h2}
Assume that $Z\in \ell\cup U$ and that $\Lambda(Z)\in K(\cal RF)$. Then $Z\in K(F)$.
\end{lemma}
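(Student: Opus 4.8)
The plan is to unwind the definitions. Suppose $Z \in \ell\cup U$ with $Z^\Lambda := \Lambda(Z) \in K(\cal RF)$. We want to show the full $F$-orbit $(F^j(Z))_{j\geq 0}$ stays in $\H$. First I would recall that $K(\cal RF)$ being $F$-invariant (for the map $\cal RF$) and $\Lambda$ conjugating a restriction of the return map $R$ to $\cal RF \bmod \Z$, the orbit $Z, R(Z), R^2(Z), \ldots$ is defined for all times and stays in $\ell\cup U$: indeed by definition of $H_0$ and \cref{eq:RF}, each application of $\cal RF$ to a point with $\Im Z^\Lambda > 0$ lands in ``$\Im Z^\Lambda > -H_0$'', and since $\Im\cal RF(Z^\Lambda)$ stays $\geq$ (something tending to $+\infty$ with $\Im Z^\Lambda$ by the right-hand side of \Cref{lem:gluev3}), the point $\cal RF(Z^\Lambda)+k$ for the appropriate integer $k$ again lies in $\Lambda(\ell\cup U)$ with positive imaginary part, so $R$ is again defined there. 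The key is that $K(\cal RF)\subset \H$ forces the whole $\cal RF$-orbit to stay high enough that $R$ never becomes undefined, so $R^n(Z)$ is defined for all $n\geq 0$ and lies in $\ell\cup U\subset\H$.

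Next, by the very definition of $R$ in terms of $J$ and $H$: for each $n$, $R^n(Z) = H^{m_n}\circ J\circ R^{n-1}(Z)$ where the intermediate points $R^{n-1}(Z)$, $J(R^{n-1}(Z))$, $H(J(R^{n-1}(Z)))$, \ldots, $H^{m_n}(J(R^{n-1}(Z))) = R^n(Z)$ are \emph{all above $y_0$}, hence all in $\H$. But $J = T^{-p_{k-1}}\circ F^{q_{k-1}}$ and $H = T^{-p_k}\circ F^{q_k}$, and $T$ is an isometry of $\H$, so each of these intermediate points being in $\H$ means a whole block of consecutive $F$-iterates (of the form $F^j$ applied to some point, for a contiguous range of $j$) lies in $\H$. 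Concatenating these blocks over all $n\geq 0$, I would argue that every $F^j(Z)$, $j\geq 0$, appears among the intermediate points of some step $n$ — because the "jump" $J$ advances by $q_{k-1}$ iterates of $F$ and then the "hops" $H$ advance by $q_k$ iterates each, and $\gcd$ considerations together with the fact that $R$ is defined for all $n$ mean the union of the iterate-ranges covers all of $\{q_{k-1}, q_{k-1}+1, \ldots\}$; the finitely many initial iterates $F^0(Z), \ldots, F^{q_{k-1}-1}(Z)$ are handled because $J(Z)$ is defined (its defining orbit segment stays above $y_0$, in particular in $\H$) — actually one should note $J(Z)$ being defined already guarantees $F(Z),\ldots,F^{q_{k-1}}(Z)\in\H$.

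The one point requiring care, and the main obstacle, is the bookkeeping that the iterate-ranges of the successive $J$-and-$H$ steps genuinely tile $\N$ with no gap: after applying $J$ (i.e.\ $F^{q_{k-1}}$ up to a $T$-translation) one lands at $J(R^{n-1}(Z))$, and then the hops apply $F^{q_k}$ repeatedly; the next $J$ again applies $F^{q_{k-1}}$. Since $q_{k-1}$ and $q_k$ are the relevant denominators and every point $R^n(Z)$ equals $F^{(\text{some integer})}(Z)$ composed with an element of $T^{\Z}$, and the total number of $F$-iterates accumulated is strictly increasing and unbounded, I would show the set of $j$ for which $F^j(Z)\in\H$ is all of $\N$ by observing it is an "upward-closed-by-blocks" set containing $0$ with no gaps larger than would be permitted — more cleanly, I would simply track the single quantity: the orbit of $Z$ under $F$ is, after suitable $T$-translations, the same as the orbit under the commuting pair $J, H$, and "defined for all time under $R$" unwinds to "$F^j(Z)$ defined and in $\H$ for all $j\geq 0$" because no intermediate step is ever skipped. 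Once this combinatorial covering is established, $F^j(Z)\in\H$ for all $j\geq 0$, i.e.\ $Z\in K(F)$, and we are done.
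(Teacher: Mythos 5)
Your proposal is correct in outline and takes essentially the paper's route: unwind the definition of $R$ as a composition of $J$ and $H$, observe that each step of $R$ that is defined forces a contiguous block of $F$-iterates to lie in $\H$ (because the domains of $J=T^{-p_{k-1}}F^{q_{k-1}}$ and $H=T^{-p_k}F^{q_k}$ contain $\Im Z>y_0$, and defining those compositions already requires the intermediate $F$-iterates to be in $\H$), and then concatenate blocks over the infinitely many $R$-steps. The paper's own proof is much terser but rests on the same mechanism: each $R^n(Z+n_0)$ is of the form $T^kF^m(Z)$ and the $F$-iterate count grows.

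Two corrections. First, the parenthetical ``$\gcd$ considerations'' is a wrong turn: nothing number-theoretic is involved. The blocks covered by a single $R$-step are exactly $[0,q_{k-1}]$, $[q_{k-1},q_{k-1}+q_k]$, \ldots, $[q_{k-1}+(m-1)q_k,\,q_{k-1}+mq_k]$, which tile $[0,\,q_{k-1}+mq_k]$ contiguously by construction; you already say this correctly at the end (``no intermediate step is ever skipped''), so just drop the $\gcd$ remark. Second, the assertion that the accumulated number of $F$-iterates is ``strictly increasing'' is not quite right: in the order-$1$ renormalization there is an exceptional case $m=0$ (equivalently $R(Z)=Z-1$), in which the count does not advance. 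The paper handles this explicitly by observing that $R(Z)=Z-1$ cannot occur twice consecutively because $T^{-2}(\ell\cup U)$ is disjoint from $\ell\cup U$; you should include this (or observe that the count is nondecreasing and, by that argument, unbounded, which suffices for the conclusion). A minor point: the invocation of \Cref{lem:gluev3} to justify that the $R$-orbit is defined forever is unnecessary — it follows directly from $\Lambda(Z)\in K(\cal RF)$ being forward invariant under $\cal RF$, together with the definition of $H_0$ and \cref{eq:RF}, which guarantee each $\cal RF$-step corresponds to an actual $R$-step on a representative in $\ell\cup U$.
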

\begin{proof}
First note that the hypothesis implies that some $Z+n$, $n\in\Z$, can be iterated infinitely many times by $R$.
Since $F$ and $T$ commute, and by definition of $R$, we have
$R(Z+n) = T^k F^m(Z)$ for some $k\in\Z$ and $m\in\N$ with $m>0$ or $m=0$ in some rare exceptional cases that may happen if we do order $1$ renormalization.
But in this case $R(Z)=Z-1$ so this cannot happen twice in a row, because $T^{-2}(\ell \cup U)$ is disjoint from $\ell \cup U$.
\end{proof}

\subsection{Proof of \Cref{lem:main2} for $\alpha\equiv 0 \bmod \Z$}\label{sub:a0}

As already noted, we can assume $\alpha=0$. The number $\alpha$ has the following two continued fraction expansions:
\[0 = [0] = [-1;1].\]
They respectively give
$\alpha_n = [0;n+1+\sqrt2] = 1/(n+1+\sqrt2)$ or $\alpha_n = [-1;1,n+1+\sqrt2] = \ldots = -1/(n+2+\sqrt 2)$.
If $\alpha_n<0$ we can conjugate the sequence $F_n$ by the reflection of vertical axis: $X+iY\mapsto -X+iY$, and proceed then exactly as below, so we assume now that
\[\alpha_n>0.\]

We will apply order $1$ renormalization, i.e.\ proceed to the construction of \Cref{sub:direct} with $k=0$ and
\[F=F_n\]
Then $p_0=a_0=0$, $q_1=1$, $\beta=\alpha_n$, $H=F_n$, $p_{-1}=1$, $q_{-1}=0$, $\beta'=-1$ and $J = T^{-1}$.

By assumption on  \Cref{lem:main2}:
\[ (\forall Z\in \H)\qquad |F_n(Z)-Z-\alpha_n|\leq K\alpha_n
\]
By the Schwarz-Pick inequality this implies:
\[ |F_n'(Z)-1|\leq \frac{K\alpha_n}{2\im Z}. \]
So there exists $\epsilon_n\tend 0$ such that
\[\sup_{\Im Z>\eps_n}|F_n'(Z)-1|\leq 1/10. \]
As explained in \Cref{subsub:defs} one can also write $F(Z)=Z+\alpha+h(e^{2\pi i Z})$ with $h$ a holomorphic function mapping $0$ to itself. Schwarz's inequality thus implies
\[ |F_n(Z)-Z-\alpha_n|\leq K\alpha_ne^{-2\pi\Im Z}.
\]

We take
\[y_0  = \max(\eps_n,\log(10K)/2\pi)\]
so that
\[\Im Z> y_0 \implies |F_n(Z)-Z-\alpha_n|\leq \alpha_n/10 \quad \text{and}\quad |F_n'(Z)-1| < 1/10.\]
In the notation $y_0$ and many of the notations that follow we omit the index $n$ for better readability.
The construction yields two sets $\ell=i(y_0,+\infty)$ and $U$,
a map $L\lambda:\ov U \to \ov \H$
where
\[\lambda(Z) = \frac{Z - iy_0}{\alpha_n},\]
and a return map $R$ from $\ell\cup U$ to itself. It also introduces:
a constant $H_0$ defined as the smallest $H\geq 0$ such that $\Z+L\lambda(\dom R)$ contains $``\Im Z>H"$; the map
\[\Lambda(Z)=L\lambda(Z)-iH_0;\]
and finally the renormalized map $\cal RF_n$, which is a modification of the restriction to $\H$ of the conjugate of $R$ by $\Lambda$.
By the properties stated in \Cref{sub:direct}, including \cref{eq:rnRF}, we have $\cal RF_n \in S(\alpha')$ with $\alpha' = \pm \sqrt2$. It follows that
\[h(\cal RF_n)\leq C_{\sqrt2},\]
see \Cref{lem:s2}.

\begin{figure}
\begin{tikzpicture}
\node at (-2.4,0) {\includegraphics{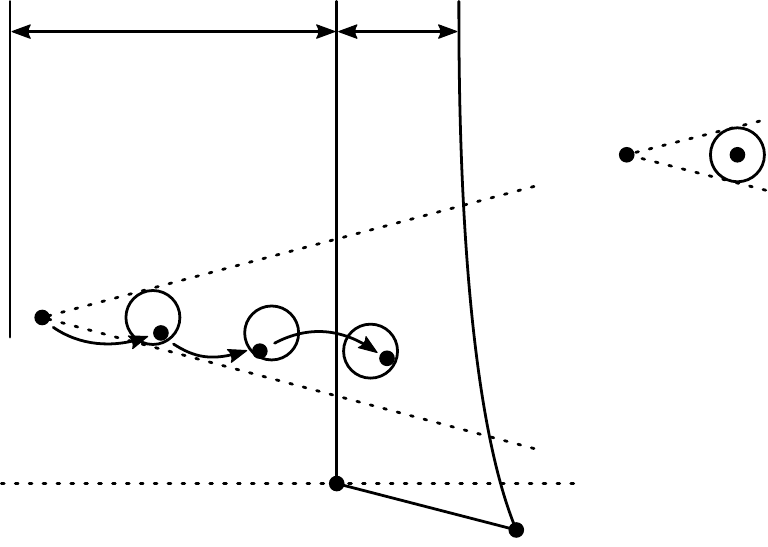}};
\node at (-5.85,-.1) {$Z$};
\node at (-.1,1.45) {$Z$};
\node at (1.2,.6) {$Z+\alpha$};
\node at (.75,2.1) {$F_n(Z)\in B(Z+\alpha,\alpha/10)$};
\node at (-4.5,2.8) {$1$};
\node at (-2.2,2.8) {$\sim \alpha$};
\node at (-2.95,-2.4) {$iy_0$};
\node at (-4.5,-2.5) {$\Im =y_0$};
\node at (-.4,-2.5) {$F_n(iy_0)$};
\node at (-2.2,1) {$U$};
\node at (-5.4,-1.05) {$F_n$};
\node at (-2.7,.0) {$\ell$};
\node at (-1.05,.0) {$F_n(\ell)$};
\end{tikzpicture}
\caption{The orbit of $Z$ remains in a cone, at least as long it stays above height $y_0$. In this picture we exagerated the angle of the cone.}
\label{fig:visit}
\end{figure}

For any $Z$ with $\im Z>y_0$, the point $F_n(Z)$ lies in a horizontal cone of apex $Z$ and with half opening angle
\[\theta = \arcsin(1/10).\]
The orbit will thus stay in that cone as long as the previous iterates all lie above $y_0$, see \Cref{fig:visit}.

Let us give a more explicit version of \Cref{lem:hp}:
\begin{lemma*}
Every point in the strip $\Re Z\in[-1,0[$ and $\Im Z > y_0 + \tan \theta$ has an orbit by $F_n$ that eventually passes the imaginary axis, i.e. $\Re F_n^k(Z)\geq 0$. The first time it does, $\Im F_n^k(Z) \geq \Im Z  - \alpha_n/10 - \tan \theta$. Before, it stays above $y_0$.

\end{lemma*}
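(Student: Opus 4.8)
The plan is to track the $F_n$-orbit of such a point $Z$ directly, using only the two inequalities $|F_n(W)-W-\alpha_n|\le\alpha_n/10$ and $|F_n'(W)-1|<1/10$, valid for $\Im W>y_0$. First I record the elementary geometric consequence: for $\Im W>y_0$ the one-step displacement $F_n(W)-W$ lies in the disk $B(\alpha_n,\alpha_n/10)$, so $\Re(F_n(W)-W)\ge 9\alpha_n/10>0$ and $|\arg(F_n(W)-W)|\le\theta=\arcsin(1/10)$; in particular $|\Im(F_n(W)-W)|\le\tan\theta\cdot\Re(F_n(W)-W)$. Since $\{w:|\arg w|\le\theta\}\cup\{0\}$ is a convex cone, a sum of such displacements stays inside it; hence, as long as the iterates $Z,F_n(Z),\ldots,F_n^{m-1}(Z)$ all have imaginary part $>y_0$, the total displacement $F_n^m(Z)-Z$ again lies in this cone, and moreover $\Re F_n^m(Z)\ge\Re Z+m\cdot 9\alpha_n/10$.

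Next I would prove by induction on $m$ the statement $Q(m)$: \emph{if $F_n^i(Z)$ is defined and $\Re F_n^i(Z)<0$ for all $0\le i\le m$, then $\Im F_n^i(Z)\ge\Im Z-\tan\theta>y_0$ for $0\le i\le m$, and $\Re F_n^m(Z)\ge\Re Z+m\cdot 9\alpha_n/10$.} The case $m=0$ is immediate from $\Im Z>y_0+\tan\theta$. For the inductive step, $Q(m)$ gives $\Im F_n^i(Z)>y_0$ for $i\le m$, so the previous paragraph applies: adding the next displacement keeps the cumulative displacement $F_n^{m+1}(Z)-Z$ inside the cone of apex $Z$, giving $|\Im F_n^{m+1}(Z)-\Im Z|\le\tan\theta\cdot(\Re F_n^{m+1}(Z)-\Re Z)$ together with $\Re F_n^{m+1}(Z)\ge\Re Z+(m+1)\cdot 9\alpha_n/10$; and since $\Re F_n^{m+1}(Z)<0$ while $\Re Z\ge-1$, the horizontal travel $\Re F_n^{m+1}(Z)-\Re Z$ is $<1$, so $|\Im F_n^{m+1}(Z)-\Im Z|<\tan\theta$, which closes the induction. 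Once $Q(m)$ holds for all $m$, the orbit cannot stay in $\Re<0$ forever, since otherwise $\Re F_n^m(Z)\ge-1+m\cdot 9\alpha_n/10\to+\infty$; let $k\ge 1$ be the first index with $\Re F_n^k(Z)\ge 0$. Then $Q(k-1)$ gives $\Im F_n^i(Z)>y_0$ for $0\le i\le k-1$ (``before, it stays above $y_0$'') and $\Im F_n^{k-1}(Z)\ge\Im Z-\tan\theta$; a final application of $|F_n(W)-W-\alpha_n|\le\alpha_n/10$ at $W=F_n^{k-1}(Z)$, which lies above $y_0$, yields $\Im F_n^k(Z)\ge\Im F_n^{k-1}(Z)-\alpha_n/10\ge\Im Z-\alpha_n/10-\tan\theta$, as claimed.

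The main point requiring care is the simultaneous, conditional nature of the induction: the real parts only provably increase by about $\alpha_n$ per step while the imaginary parts stay above $y_0$, which is itself the conclusion one is after, so both facts (plus definedness of the iterates) must be carried in a single inductive statement conditioned on ``$\Re<0$ up to step $m$''. The other subtlety is to invoke the hypothesis $\Re F_n^{m+1}(Z)<0$ at precisely the right moment, so that the horizontal travel is bounded by $1$ and hence the vertical drift by $\tan\theta$, rather than by a quantity growing with the number of steps. Beyond this bookkeeping no real analytic difficulty arises: the whole argument is elementary plane geometry once the cone observation and the displacement bounds are in place.
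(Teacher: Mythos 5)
Your proof is correct and follows the same approach as the paper's: the cone observation for the one-step displacement (coming from $|F_n(W)-W-\alpha_n|\le\alpha_n/10$), the fact that the cumulative displacement stays in the cone as long as the iterates remain above $y_0$, the lower bound on the real increment to guarantee the orbit eventually crosses the imaginary axis, and then the $\tan\theta$ and $\alpha_n/10$ losses in imaginary part. The paper's proof is just a terser rendering of the same induction; your explicit inductive statement $Q(m)$ and the remark that the hypothesis $\Re<0$ must be invoked at exactly the right moment to cap the horizontal travel at $1$ make the bookkeeping visible without adding anything new.
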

\begin{proof} By the cone condition, it follows by induction on $i$ that $F_n^i(Z)$ stays above $y_0$ as long as it belongs to the strip. By assumption when we iterate a point above $y_0$, the real part increases by at least $9/10 \alpha_n$ so we know the orbit will eventually pass the imaginary axis. Just before it was above $\Im Z-\tan \theta$ and at the next iterate the imaginary part decreases at most by $\alpha_n/10$.
\end{proof}

If the first iterate $F_n^k(Z)$ passing the imaginary axis in the lemma above satisfies $\Im F_n^k(Z) > \max(y_0,\Im F_n(y_0))$ then* $F_n^k(Z)\in \ell\cup U$.

(*)\label{star1} For a justification of this claim, consider the horizontal segment from $F_n^{k-1}(Z)$ to $\ell$. Its image is a curve with tangent deviating less that $\theta<\pi/2$ from the horizontal, whereas $F(\ell)$ has a tangent that deviates less than $\theta$ from the vertical, so $F(\ell)$ is contained in $\setof{z\in\C}{|\arg(z-F^k_n(Z))|<\pi/2+\theta}$. It follows that $F_n^k(Z)$, can be linked to $\ell$ by a horizontal segment going to the left and that does not cross the other boundary lines of $U$.

Now $\Im F_n(y_0)\leq y_0+\alpha_n/10$.
By the lemma above, $\dom R$ contains every point in $\ell\cup U$ of imaginary part strictly larger than $y_1$ with
\[y_1=y_0+2\alpha_n/10 +\tan\theta.\]

We will apply \Cref{lem:gluev3} to $L$. 
It introduced constants $A>1$ and $C_1>0$.
\begin{lemma*}
For $Z\in \ell\cup U$ and $\Im Z>y_2$ with
\[y_2= y_1 + \alpha_n(\frac1{10}+C_1+A\max(C_1,C_{\sqrt 2}))\]
then $Z\in K(F_n)$.
\end{lemma*}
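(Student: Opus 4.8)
The plan is to derive this from \Cref{lem:h2}: it suffices to show that $\Lambda(Z)\in K(\cal RF_n)$ whenever $Z\in\ell\cup U$ and $\Im Z>y_2$. Since $\cal RF_n\in\cal S(\pm\sqrt 2)$, \Cref{lem:s2} gives $h(\cal RF_n)\leq C_{\sqrt 2}$ (enlarging the constant $C_{\sqrt 2}$ slightly if needed, we may assume this strict), so $K(\cal RF_n)$ contains the half plane $\setof{Z^\Lambda}{\Im Z^\Lambda>C_{\sqrt 2}}$. As $\Lambda=L\lambda-iH_0$, the goal becomes: $\Im Z>y_2\implies\Im L\lambda(Z)>C_{\sqrt 2}+H_0$.

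First I would bound $H_0$ from above. By its definition $H_0$ is at most the supremum of $\Im Z^\Lambda$ over $Z^\Lambda\notin\Z+L\lambda(\dom R)$. Since $L\lambda(\ell\cup U)=L(\ell^\lambda\cup U^\lambda)$ is a fundamental domain for $T$ acting on $\H$, and since $\dom R$ contains every point of $\ell\cup U$ of imaginary part $>y_1$, any such $Z^\Lambda$ is, up to an integer translation, of the form $L\lambda(Z')$ with $Z'\in\ell\cup U$ and $\Im Z'\leq y_1$. Hence $H_0\leq\sup\setof{\Im L\lambda(Z')}{Z'\in\ell\cup U,\ \Im Z'\leq y_1}$, which is finite by the right-hand inequality of \Cref{lem:gluev3}.

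The main point is then to compare $Z$ directly to those ``low'' points $Z'$, rather than to the corner $iy_0$. Fix $Z\in\ell\cup U$ with $\Im Z>y_2$ and an arbitrary $Z'\in\ell\cup U$ with $\Im Z'\leq y_1$, and put $W=\lambda(Z)$, $W'=\lambda(Z')$. Since $\lambda(\zeta)=(\zeta-iy_0)/\alpha_n$,
\[\Im(W-W')=\frac{\Im Z-\Im Z'}{\alpha_n}>\frac{y_2-y_1}{\alpha_n}=\frac1{10}+C_1+A\max(C_1,C_{\sqrt 2}),\]
which exceeds both $(A+1)C_1$ and $AC_{\sqrt 2}+C_1$. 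The left-hand inequality of \Cref{lem:gluev3} gives $|\Im(L(W)-L(W'))|\geq(\Im(W-W')-C_1)/A>C_1$, so the sign statement of that lemma forces $\Im(L(W)-L(W'))$ to be positive, and therefore $\Im L\lambda(Z)-\Im L\lambda(Z')\geq(\Im(W-W')-C_1)/A>C_{\sqrt 2}$. Taking the supremum over the admissible $Z'$ yields $\Im L\lambda(Z)\geq C_{\sqrt 2}+H_0$, which is exactly what was needed; \Cref{lem:h2} then gives $Z\in K(F_n)$.

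The step I expect to be the real obstacle — and the one worth getting right — is precisely this choice of comparison point. Comparing $Z$ to $iy_0$ (as in the proof of \Cref{lem:hp}) would make $\Im W$ of order $\tan\theta/\alpha_n$, and the distortion constant $A$ of \Cref{lem:gluev3} would then enter multiplied by that large quantity and wreck the bound; comparing instead to points of height $\leq y_1$ keeps $\Im(W-W')$ of order $1$, so $A$ only contributes to the harmless $O(\alpha_n)$ correction appearing in $y_2$. Beyond that I expect only routine bookkeeping: finiteness of the supremum defining (the bound on) $H_0$, and strict-versus-non-strict inequalities, which are absorbed by the strictness of $\Im Z>y_2$ or by taking $C_{\sqrt 2}$ marginally larger.
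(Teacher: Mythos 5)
Your proof is correct and follows essentially the same approach as the paper: apply \Cref{lem:gluev3} to compare $\lambda(Z)$ with $\lambda(Z')$ for a ``low'' point $Z'$ of $\ell\cup U$, deduce $\Im\Lambda(Z)>C_{\sqrt 2}$, and conclude via \Cref{lem:h2}. The only difference is that the paper picks a single witness $Z_0=(L\lambda)^{-1}(V_0)$ with $V_0$ maximizing $\Im$ on $\partial(\Z+L\lambda(\dom R))$, which forces it to prove $\Im Z_0\leq y_1+\alpha_n/10$ (the extra $\alpha_n/10$ handling the case $Z_0\in\ell$), whereas you take the supremum of $\Im L\lambda(Z')$ over all $Z'\in\ell\cup U$ with $\Im Z'\leq y_1$ — a slightly cleaner bookkeeping that sidesteps the boundary-case adjustment, at the cost of a sup rather than an attained maximum (handled by your remark on strict inequalities). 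In fact you have a spare margin of $\frac1{10A}$ in the final estimate, so the inequality is automatically strict without enlarging $C_{\sqrt 2}$; and your identification of why one must compare to a low point $Z'$ rather than to the origin $iy_0$ (as in the proof of \Cref{lem:hp}) is exactly the right observation.
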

\begin{proof}
Consider such a $Z$.
We apply \Cref{lem:gluev3} to specific values of $W$ and $W'$: consider a point $V_0$ in the boundary of $\Z+L\lambda(\dom R)$ and maximizing the imaginary part, so $\Im V_0 = H_0$.
By adding a (possibly negative) integer, we may assume that $V_0\in L\lambda(\dom R)$.
Let $Z_0=(L\lambda)^{-1}(V_0)\in \ell\cup U$. If $Z_0\in U$ then $\Im Z_0\leq y_1$ otherwise it would be in the interior of $\dom R$. If $Z_0\in \ell$ then $\Im Z_0\leq y_1+\alpha_n/10$ otherwise $L\lambda(Z_0)$ belongs to the interior of $\Z+L\lambda(\dom R)$.
In all cases, $\Im Z_0 \leq y_1+\alpha_n/10$.
We take $W=\lambda(Z)$ and $W' = \lambda^{-1}(V_0)$.
Note that $\Im W - \Im W' > (y_2 - y_1 -\alpha_n/10)/\alpha_n >0$.
By the left hand inequality in the \Cref{lem:gluev3}:
$|\Im L(W)-\Im L(W')|\geq (|\Im W-\Im W'|-C_1)/A \geq \max(C_1 ,C_{\sqrt 2}) \geq C_1$.
By the second claim in \Cref{lem:gluev3} we get that $\Im  L(W) > \Im L(W')$. Now
$\Im L(W)>\Im L(W') + C_{\sqrt 2}$ i.e.\ $\Im \Lambda(Z)>C_{\sqrt 2}$
hence $\Lambda(Z)\in K(\cal RF_n)$, hence $Z\in K(F_n)$ by \Cref{lem:h2}.
\end{proof}

Now this construction could have been carried out on the conjugate of $F_n$ by any horizontal translation $Z\mapsto Z+x$, which amounts to replace the origin $iy_0$ of the line $\ell$ by $x+iy_0$. In particular every point with imaginary part $\geq y_0$ is contained in the set $\ell\cup U$ associated to an appropriate choice of $x$. Hence
\[h(F_n)\leq y_2.\]

Putting everything together, we have proved that
$h(F_n) \leq \alpha_n(1/10+C_1+A\max(C_1,C_{\sqrt 2})) +2\alpha_n/10 +\tan\theta + \max(\eps_n,\log(10K)/2\pi)$ where $\theta=\arcsin 1/10$.
Since $\alpha_n\tend 0$ this gives:\footnote{In fact, $\alpha_n\leq 1$ is enough.}
\[\limsup_{n\to+\infty} h(F_n)\leq C(K) := C_0 + \frac{\log K}{2\pi}\]
for some universal constant $C_0>0$.

\subsection{Improvement through renormalization for maps tending to a non-zero rotation}\label{sub:improve}

Consider $\alpha\in\R$ with $\alpha\notin \Z$ and $\alpha_n\in\R$ with $\alpha_n\tend\alpha$.
Consider a sequence $F_n\in \cal S(\alpha_n)$ and assume $F_n\tend T_\alpha$ uniformly on $\H$ when $n\to+\infty$ where
\[T_\alpha(Z)= Z+\alpha.\]
The first statement we give does not need a Lipschitz type assumption on how fast this convergence occurs.

Consider a renormalization as per \Cref{sub:direct}: it involves the choice of $k$ such that $\alpha$ has a continued fraction\footnote{$\alpha$ has one or two c.f.\ expansions, see \Cref{sub:remind}.} of which $[a_0;a_1,\ldots,a_{k+1}]$ is an inital segment. We let $k$ be constant, i.e.\ independent of $n$.

To proceed with the construction of the renormalization, we need to choose $y_0$ such that the conditions of \Cref{sub:direct} are satisfied.
We will use the notation $H$ and $J$ as in that section, i.e.\ without the index $n$:
\begin{eqnarray*}
J &=& T^{-p_{k-1}}\circ F_n^{q_{k-1}},\\
H &=& T^{-p_k}\circ F_n^{q_k} .
\end{eqnarray*}
Let
\begin{align*} & \beta_n = \alpha(H) = q_k\alpha_n-p_k,
\\ & \beta'_n = \alpha(J) =q_{k-1}\alpha_n-p_{k-1}
\end{align*}
be their respective rotation numbers and let
\[\beta = q_k\alpha-p_k\]
\[\beta'=q_{k-1}\alpha-p_{k-1}\]
be their respective limits.
We have $0<|\beta|<|\beta'|\leq 1$.
\[\beta_n \underset{n\to\infty}{\tend} \beta \neq 0 \text{ and } \beta'_n \underset{n\to\infty}{\tend} \beta' \neq 0 \]

Note that, as $n\to+\infty$, $H\tend T_{\beta}$ and $J\tend T_{\beta'}$ hence
\[ \|H-T_{\beta_n}\|_\infty\ntoi\tend 0\text{ and }\|J-T_{\beta'_n}\|_\infty\ntoi\tend 0
.\]
Since $\beta_n\tend\beta\neq 0$ it follows that for $n$ big enough we have $\forall Z\in \dom H$, $|H(Z)-Z-\beta_n| < |\beta_n|/10$ and $\forall Z\in \dom J$,  $|J(Z)-Z-\beta'_n| < |\beta_n|/10$.
Also, for all $\eps>0$, $H'\tend 1$ and $J'\tend 1$ as $n\to+\infty$ both uniformly on the subset of $\H$ defined by the equation $\Re Z>\eps$. Whence the existence of $\eps_n\tend 0$ and $\delta_n\tend 0$ with $\delta_n<1/10$ such that for $n$ big enough, $\Im Z>\eps_n$ $\implies$ $Z\in\dom H$ (hence $Z\in\dom J$) and
\[|H'(Z)-1|<\delta_n,\]
\[|J'(Z)-1|<\delta_n.\]

Thus we can take $y_0=\eps_n$, assuming $n$ big enough. Note that $y_0\tend 0$ as $n\to+\infty$. 
The exact value of $y_0$ is not so important, what matters is that it tends to zero:
\[y_0\underset{n\to\infty}{\tend} 0\]

Then \Cref{sub:direct} associates an order $k+1$ renormalization $\cal RF_n$ to the pair of maps $H$, $J$, via a return map $R$ and a straightening $L\lambda \bmod \Z$ of a Riemann surface $\cal U = (\ell\cup U\cup H(\ell))/H$, where $\lambda$ 
is a change of variable that takes the form $\lambda(Z)=(Z-iy_0)/\beta_n$ or $\ov{(Z-iy_0)}/\beta_n$ (it depends on $k$).

\begin{lemma}\label{lem:improve}
We have
\[\limsup h(F_n)\leq |\beta|\limsup h(\cal RF_n).\]
The same statement holds with $\limsup$ replaced by $\liminf$.
\end{lemma}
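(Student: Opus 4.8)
The plan is to feed the implication of \Cref{lem:h2} --- if $Z\in\ell\cup U$ and $\Lambda(Z)\in K(\cal RF_n)$ then $Z\in K(F_n)$ --- into an accounting of how $\Lambda=L\lambda-iH_0$ moves imaginary parts. Since $\lambda$ divides heights by $|\beta_n|\to|\beta|\neq0$, the whole content of the lemma is that the \emph{additive} quantities in play --- $y_0$, $H_0$, and the discrepancy between $L$ and the identity --- tend to $0$; this is exactly where the hypothesis $F_n\to T_\alpha$ enters, through $\|h_n\|_\infty:=\|F_n-T_{\alpha_n}\|_{\infty,\H}\to0$, whence $\|H-T_{\beta_n}\|_\infty\to0$, $\|J-T_{\beta'_n}\|_\infty\to0$, $\delta_n\to0$ and $y_0=\eps_n\to0$ as already recorded in \Cref{sub:improve}.

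First I would check the vanishing of the error constants. The cone of \cref{eq:oneH} associated with $H$ now has half-angle $\theta_n=\arcsin\bigl(\|H-T_{\beta_n}\|_\infty/|\beta_n|\bigr)\to0$ (likewise for $J$), because the numerator tends to $0$ while $|\beta_n|\to|\beta|\neq0$. Re-running the bookkeeping of \Cref{lem:hp} and of \Cref{sub:a0} with these angles: the number $m$ of $H$-hops making up a return is comparable to $|\beta'_n|/|\beta_n|$, hence bounded, so the return orbit $Z,J(Z),HJ(Z),\dots,R(Z)=H^mJ(Z)$ involves a bounded number of $F_n$-iterates and has total imaginary excursion $O\bigl((|\beta_n|+|\beta'_n|)\tan\theta_n+\|h_n\|_\infty\bigr)=o_n(1)$. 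Thus $\dom R\supset\setof{Z\in\ell\cup U}{\Im Z>y_1}$ with $y_1=y_0+o_n(1)\to0$; rescaling by $\lambda$ this contains $\setof{W\in\ell^\lambda\cup U^\lambda}{\Im W>(y_1-y_0)/|\beta_n|}$ with $(y_1-y_0)/|\beta_n|\to0$, and applying \Cref{lem:gluev4} with $\delta=\delta_n\to0$ gives $\Z+L\lambda(\dom R)\supset\setof{W}{\Im W>o_n(1)}$, i.e.\ $H_0\to0$.

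Then I would perform the transfer. Fix a target $h>\limsup h(\cal RF_n)$ (for the $\liminf$ statement, fix $h>\liminf h(\cal RF_n)$ and restrict to a subsequence along which $h(\cal RF_n)<h$); then $\setof{Z^\Lambda}{\Im Z^\Lambda>h}\subset K(\cal RF_n)$ for all large $n$. The claim is that there is $\eta_n=\eta_n(h)\to0$ (for each fixed $h$) such that every $Z\in\ell\cup U$ with $\Im Z>|\beta_n|h+\eta_n$ lies in $\dom R$ and satisfies $\Im\Lambda(Z)>h$: indeed such a $Z$ has $\Im Z>y_1$, so $\Lambda(Z)$ is defined, and $\Im\lambda(Z)=(\Im Z-y_0)/|\beta_n|$ exceeds $h+H_0+o_n(1)$; to pass from $\Im\lambda(Z)$ to $\Im L(\lambda(Z))$ one splits on the size of $\Im\lambda(Z)$, using \Cref{lem:gluev4} (whose cost tends to $0$) when it is bounded in terms of $h$, and \Cref{lem:gluev3} against a reference point of moderate height when it is large --- there the multiplicative constant $A>1$ of \Cref{lem:gluev3} does no harm precisely because one is far above that reference height. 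Either way $\Im\Lambda(Z)=\Im L(\lambda(Z))-H_0>h$, so $\Lambda(Z)\in K(\cal RF_n)$ and $Z\in K(F_n)$ by \Cref{lem:h2}. Since the whole construction is conjugated by $Z\mapsto Z-x$ when $F_n$ is replaced by $T^{-x}\circ F_n\circ T^{x}$ (with the same $y_0$, $y_1$, $H_0$), every $Z\in\H$ with $\Im Z>|\beta_n|h+\eta_n$ lies in the set $\ell\cup U$ of a suitable translate, hence in $K(F_n)$; therefore $h(F_n)\le|\beta_n|h+\eta_n(h)$. Letting $n\to\infty$ gives $\limsup h(F_n)\le|\beta|h$ (resp.\ $\liminf h(F_n)\le|\beta|h$ along the subsequence, hence for the full sequence), and then letting $h$ decrease to $\limsup h(\cal RF_n)$ (resp.\ $\liminf h(\cal RF_n)$) finishes the proof; the case of an infinite $\limsup$ is vacuous.

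The hard part is the height-distortion step of the previous paragraph: $\Im L(\lambda(Z))$ must be controlled with \emph{no} surviving multiplicative constant, which forces the sharp-but-local \Cref{lem:gluev4} to be used in the bounded-height regime and the crude \Cref{lem:gluev3} only where its factor $A$ is neutralized. The accompanying bookkeeping --- that $y_0$, $H_0$, $y_1-y_0$, $\theta_n$ and the relevant value of $B(\cdot)\delta_n$ all tend to $0$, and that $\eta_n(h)$ can be arranged to tend to $0$ for each fixed $h$ while the auxiliary constants depending on $h$ stay frozen in $n$ --- is routine but must be carried out with care.
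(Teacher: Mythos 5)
Your proof is correct and follows essentially the same route as the paper's, invoking the same ingredients (\Cref{lem:h2}, \Cref{lem:gluev3}, \Cref{lem:gluev4}, the fact that $y_0,\,y_1,\,H_0\to0$, and the horizontal-translation trick), differing only in logical direction: you argue directly that $\Im\Lambda(Z)>h$ forces $Z\in K(F_n)$ and so bound $h(F_n)$ from above, whereas the paper argues the contrapositive, producing from an escaping $Z_n$ an escaping $\Lambda(R(Z_n))$ of controlled height and hence a lower bound on $h(\cal RF_n)$. The only other deviation is cosmetic: for points with large $\Im\lambda(Z)$ you use \Cref{lem:gluev3}, while the paper uses a monotonicity argument comparing against the image of a nearly-horizontal segment in $\C/\Z$; both work and the bookkeeping is the same.
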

\begin{proof}
It is enough for both statements to prove that if $h_0\geq 0$ and if we have a subsequence $n\in I\subset\N$ and points $Z_n\in\H$ with $Z_n\notin K(F_n)$ but $\Im Z_n \geq h_0$ then $\liminf_{n\in I} h(\cal RF_n)\geq h_0/|\beta|$.
From now on all limits are taken for $n\in I$.

We can conjugate $F_n$ by a real translation and assume that $\Re(Z_n) = 0$.
 
In \Cref{sub:direct} is defined a constant $H_0\geq 0$, the infimum of heights of upper half planes contained in $\Z+L\lambda\dom R$. Is also defined the map $\Lambda = L\lambda -iH_0$.
Let $H_0'=\inf\setof{h>0}{(Z\in\ell\cup U\text{ and }\Im Z>h) \implies Z\in\dom R}$.
We claim that $H_0'\leq y_1$ where
\[y_1 = y_0 + \|J-T_{\beta'_n}\|_\infty + u + \|H-T_{\beta_n}\|_\infty\]
with $u =  (|\beta_n|+\|J-T_{\beta'_n}\|_\infty)\tan \arcsin(\|H-T_{\beta_n}\|_\infty/|\beta_n|)$.
The arguments are similar to \Cref{sub:a0}, when we controlled the domain of $R$ via a constant also called $y_1$: for $\Im Z>y_1$, the point $J(Z)$ is defined and has a forward iterate $Z' = H^{m(Z)}\circ J(Z)$ by $H$ which hits $\ell\cup U$ while staying above $y_0+\|H-T_{\beta_n}\|_\infty$. By definition $R(Z)=Z'$. In particular $H_0'\tend 0$ as $n\to+\infty$.

This implies that $H_0\tend 0$ as $n\to+\infty$: indeed, $\lambda$ tends to the linear map $Z\mapsto Z/\beta$ or $Z\mapsto \ov Z/\beta$ and $L$ tends to the identity on the set of points $Z$ with $\im Z\leq 1$ by \Cref{lem:gluev4}.

The quantity $y_1$ tends to $0$ when $n\to+\infty$. Recall that $\Im Z_n\geq h_0$ which does not depend on $n$. Hence for $n$ big enough we have $\Im Z_n > y_1$. So $Z_n' = R(Z_n)$ is defined.
Note that $Z'_n\notin K(F_n)$ for otherwise $Z_n$ would belong to $K(F_n)$ too.
By \Cref{lem:h2} we have $\Lambda(Z'_n)\notin K(\cal RF_n)$.

Note that $\liminf \Im\lambda(Z_n) \geq h_0/|\beta|$.
We apply \Cref{lem:gluev4} to $L$. Noting $H^\lambda = \lambda\circ H\circ\lambda^{-1}$, we have $|(H^\lambda)'-1|<\delta_n$ on $\H$ and
$H^\lambda\tend T_1$ uniformly on $\H$, so the hypotheses of \Cref{lem:gluev4} are satisfied, with a value of $\delta$ that depends on $\delta_n$ and tends to $0$.
The conclusions of this lemma with $M=1+h_0/|\beta|$ then imply that $\liminf \im L\lambda(Z'_n)\geq h_0/|\beta|$.
(Indeed let $y=1/2+h_0/|\beta|$. For one thing the image of the segment $[iy,H^\lambda(iy)]$ by $L$ followed by the projection $\C\to\C/\Z$ is a closed curve that tends to a horizontal curve as $n\to+\infty$. Points on or above the segment are mapped by $L$ to point whose imaginary part is at least the infimum of $\Im Z$ over this closed curve, and this infimum tends to $y$.
Whereas points $Z$ below the segment satisfy $|\Im L(Z) - Z| < B(M) \delta$ and recall that $\delta \tend 0$ as $n\to+\infty$ and that $\liminf \Im\lambda(Z_n) \geq h_0/|\beta|$.)
Since $L\lambda(Z'_n)-iH_0\notin K(\cal RF_n)$ and $H_0\tend 0$ we get $\liminf h(\cal RF_n)\geq h_0/|\beta|$.
\end{proof}

We complement this lemma with the following one, which requires a Lipschitz-type assumption on $\alpha_n\mapsto F_{\alpha_n}$ and also on $\alpha_n \mapsto F'_{\alpha_n}$.
\begin{lemma}\label{lem:nK}
Assume
\[|F_n(Z)-Z-\alpha_n|\leq K|\alpha_n-\alpha|\]
and \emph{a new assumption:}
\[|F'_n(Z)-1|\leq K|\alpha_n-\alpha|.\]
Then for all $n$ big enough we have
\[\sup_{\Im Z>0}|\cal RF_n(Z)-T_{\alpha'_n}(Z)| \leq D K |\alpha'_n-\alpha'|\]
where $\alpha_n'$
is the rotation number of $\cal RF_n$ and $\alpha'$
is its limit. Here $D>1$ is a universal constant.
\end{lemma}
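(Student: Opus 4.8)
The plan is to follow the construction of $\cal RF_n$ in \Cref{sub:direct} step by step, keeping track of how each elementary operation amplifies the two Lipschitz bounds, and then to use continued‑fraction arithmetic to keep every amplification factor universally bounded; a final application of the maximum principle will localize the estimate near the real axis, where it becomes elementary.

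First I would transfer the hypotheses to the maps $H=T^{-p_k}\circ F_n^{q_k}$ and $J=T^{-p_{k-1}}\circ F_n^{q_{k-1}}$. By \Cref{sub:iter}, using $\beta_n-\beta=q_k(\alpha_n-\alpha)$ and $\beta'_n-\beta'=q_{k-1}(\alpha_n-\alpha)$, one gets $\|H-T_{\beta_n}\|_\infty\le Kq_k|\alpha_n-\alpha|$ and $\|J-T_{\beta'_n}\|_\infty\le Kq_{k-1}|\alpha_n-\alpha|$; from $|F'_n-1|\le K|\alpha_n-\alpha|\le e^{K|\alpha_n-\alpha|}-1$ and the multiplicative rule of \Cref{sub:iter} one gets $\|H'-1\|_\infty\le e^{Kq_k|\alpha_n-\alpha|}-1$, hence $\le 2Kq_k|\alpha_n-\alpha|$ for $n$ large, and the same for $J$. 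Conjugating by the affine $\lambda$, whose derivative has modulus $1/|\beta_n|$, divides the bounds of the first kind by $|\beta_n|$ and leaves the derivative bounds unchanged, so on $\H$
\[\|H^\lambda-T_1\|_\infty,\ \|(H^\lambda)'-1\|_\infty\ \le\ 2Kq_k|\alpha_n-\alpha|/|\beta_n|,\qquad \|J^\lambda-T_{\alpha'_n}\|_\infty\ \le\ Kq_{k-1}|\alpha_n-\alpha|/|\beta_n|.\]

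The crucial arithmetic input is the identity
\[\alpha'_n-\alpha'\ =\ \frac{\beta'_n\beta-\beta'\beta_n}{\beta_n\beta}\ =\ \frac{(q_{k-1}\beta-q_k\beta')(\alpha_n-\alpha)}{\beta_n\beta}\ =\ \frac{(-1)^k(\alpha_n-\alpha)}{\beta_n\beta},\]
using $q_kp_{k-1}-q_{k-1}p_k=(-1)^k$ (\Cref{sub:remind}). Thus $|\alpha_n-\alpha|=|\beta_n|\,|\beta|\,|\alpha'_n-\alpha'|$, and combined with the elementary bounds $q_k|\beta|<q_k/q_{k+1}\le1$, $q_{k-1}|\beta|<1$, $q_k|\beta'|<1$ (from $|q_j\alpha-p_j|<1/q_{j+1}$), every quantity of the form $Kq_\bullet|\alpha_n-\alpha|/|\beta_n|$ above is $\le CK|\alpha'_n-\alpha'|$ for a universal $C$, as soon as $n$ is large enough that $|\beta_n|\ge|\beta|/2$. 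The same inequalities also control the number $m$ of hops in the return map $R$: since $H^m$ undoes the jump of $J$ up to an error $O(|\beta_n|)$, one has $m|\beta_n|\le|\beta'_n|+O(|\beta_n|)$, hence $mq_k|\beta|\le 2q_k|\beta'_n|+O(1)=O(1)$ universally. Now the simplification that makes the estimate feasible: because $\Lambda=L\lambda-iH_0$ conjugates $H$ to the translation $T$, the $m$ hops collapse into an integer translation which is absorbed in the quotient, so for $Z^\Lambda$ in the fundamental domain $\Lambda(\ell\cup U)$ (enough, by $T$-periodicity of $\cal RF_n-T_{\alpha'_n}$), writing $Z=\Lambda^{-1}(Z^\Lambda)$ and $R(Z)=H^m(J(Z))\in\ell\cup U$, formula \cref{eq:RF} becomes
\[\cal RF_n(Z^\Lambda)-Z^\Lambda-\alpha'_n\ =\ L\big(\lambda(R(Z))\big)-L\big(\lambda(Z)\big)-m-\alpha'_n.\]
Writing $L=\on{id}+\psi$, the main term $\lambda(R(Z))-\lambda(Z)-m-\alpha'_n$ equals $\big(R(Z)-Z-\beta'_n-m\beta_n\big)/\beta_n$, of modulus $\le K(mq_k+q_{k-1})|\alpha_n-\alpha|/|\beta_n|=O(K|\alpha'_n-\alpha'|)$ by the previous estimates. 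For the remaining term $\psi(\lambda(R(Z)))-\psi(\lambda(Z))$ I would use the maximum principle: since $\cal RF_n\in\cal S(\alpha'_n)$, the difference $\cal RF_n-T_{\alpha'_n}$ is of the form $g\circ E$ with $g:\D\to\C$ holomorphic and $g(0)=0$, so $\sup_{\Im Z^\Lambda>0}|\cal RF_n(Z^\Lambda)-Z^\Lambda-\alpha'_n|$ equals the (non-increasing) limit as $t\to0^+$ of $\max_{\Im Z^\Lambda=t}|\cal RF_n(Z^\Lambda)-Z^\Lambda-\alpha'_n|$, and it suffices to bound the latter for small $t>0$. For such $Z^\Lambda$, and $n$ large, both $Z$ and $R(Z)$ lie in $\ell\cup U$ with imaginary part close to $y_0$, so $\lambda(Z)$ and $\lambda(R(Z))$ have imaginary part bounded by a universal $M_1$ (propagate the smallness of $\Im L(\cdot)$ back through \Cref{lem:gluev3}, using the base point where $\bar L$ vanishes); then \Cref{lem:gluev4} applied to $H^\lambda$, whose relevant $\delta$ is $\le 2Kq_k|\alpha_n-\alpha|/|\beta_n|=O(K|\alpha'_n-\alpha'|)$ and $<1/10$ for $n$ large, gives $|\psi|\le B(M_1)\delta=O(K|\alpha'_n-\alpha'|)$ on $\{\Im<M_1\}$. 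Summing the three contributions yields the statement with a universal $D$. The case $\beta_n<0$, where $\lambda$ is antiholomorphic, reduces to the above by first conjugating $F_n$ with $Z\mapsto\ov Z$, which changes no estimate.

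The main obstacle is exactly this amplification bookkeeping. Three operations each threaten the universality of $D$: the rescaling $\lambda$, a dilation by $\sim1/|\beta|$ that can be arbitrarily large; the iteration $H^m$, where $m$ can be arbitrarily large; and the gluing map $L$ near the boundary, where the constant $B(M)$ of \Cref{lem:gluev4} blows up with $M$. They are neutralized, respectively, by the continued‑fraction identity for $\alpha'_n-\alpha'$ (whose factor $1/|\beta|$ cancels against $|\alpha_n-\alpha|$), by the estimates $m|\beta_n|=O(1)$ and $q_k|\beta'_n|=O(1)$, and by the reduction via the maximum principle to points of small imaginary part, where the relevant imaginary parts stay bounded so that \Cref{lem:gluev4} (rather than the weaker \Cref{lem:Lpun}) suffices. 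Everything else is routine propagation of Lipschitz estimates through the gluing construction, and is where the new hypothesis $|F'_n-1|\le K|\alpha_n-\alpha|$ is genuinely needed, as it alone makes $\delta$ in \Cref{lem:gluev4} tend to $0$.
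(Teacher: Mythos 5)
Your proof is correct and follows essentially the same route as the paper: you transfer the Lipschitz bounds to $H$, $J$ and $H^\lambda$, use the identity $\alpha'_n-\alpha'=(-1)^k(\alpha_n-\alpha)/(\beta\beta_n)$ together with the continued-fraction inequalities to control all amplification factors, bound $m$ so that $mq_k|\beta|=O(1)$, split $\cal RF_n-T_{\alpha'_n}$ into the ``main'' $\lambda$-term (bounded via $|R(Z)-Z-\beta''_n(Z)|\le K|\beta''_n(Z)-\beta''(Z)|$) plus the $L-\mathrm{id}$ corrections (bounded via \Cref{lem:gluev4}), and reduce to small $\Im Z^\Lambda$ by the maximum principle. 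The only cosmetic difference is that you bound $\Im\lambda(Z)$ by passing $\Im Z^\Lambda$ back through \Cref{lem:gluev3} with the base point $\bar L(0)=0$, whereas the paper uses the $L^{-1}$ estimate of \Cref{lem:gluev4}; both give a universal $M$ once $H_0\to 0$ is used.
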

\begin{proof}
According to \Cref{sub:direct} have $\alpha'_n=\beta'_n/\beta_n$ and $\alpha'=\beta'/\beta$.
An elementary computation yields
\[\alpha'_n-\alpha' = \frac{(-1)^k}{\beta\beta_n} (\alpha_n-\alpha).\]

We want to apply \Cref{lem:gluev4} with $M=1$ to get information on
$L$. For this we need to estimate $H^\lambda:=\lambda\circ H\circ \lambda^{-1}$.
Note that $\beta_n$ is the rotation number of $H = T^{-p_k} \circ F_n^{q_k}$.
From the first Lipschitz assumption we get that
\begin{equation}\label{eq:HJlem}
\begin{aligned}
|H(Z)-Z-\beta_n|&\leq K|\beta_n-\beta|,
\\ |J(Z)-Z-\beta'_n|&\leq K|\beta'_n-\beta'|,
\end{aligned}
\end{equation}
see \Cref{sub:iter}.
From the second that
\[|H'(Z)-1|\leq d_n:=(1+K |\alpha_n-\alpha|)^{q_k}-1\sim K|\beta_n-\beta|
\]
when $n\to+\infty$ (there is a similar estimate for $J'$ but we will not use it).
The rotation number of $H^\lambda$ is $1$ and 
\[|H^\lambda(Z)-Z-1| \leq K|\beta_n-\beta| / |\beta_n|
,\]
\[ |(H^\lambda)'-1| \leq d_n\sim K|\beta_n-\beta|
.\]
The bound on the derivative of $H^\lambda$ is better than the bound on $H^\lambda$. However we will apply \Cref{lem:gluev4} which only uses a common bound, i.e.\ here: $K|\beta_n-\beta|/|\beta_n|$, since for $n$ big enough, we have $|\beta_n|<1$.
By this lemma applied to $M=1$ we get
\begin{equation}\label{eq:Lpr}
\Im W\leq 1\ \implies\ |L(W)-W| \leq B(1) K |\beta_n-\beta|/|\beta_n|
\end{equation}
and
\begin{equation}\label{eq:Lpr2}
\Im W\leq 1\ \implies\ |L^{-1}(W)-W| \leq B(1) K |\beta_n-\beta|/|\beta_n|.
\end{equation}

Note that, as $n\to+\infty$:
\[|\beta_n-\beta|/|\beta_n|\sim q_k|\alpha_n-\alpha|/|\beta|\]

Now for $Z\in \dom R$ the return map is $R(Z) = H^{m(Z)}\circ J(Z)$ for some $m(Z)\in\N$.
Let
\[\beta''_n(Z) = \beta'_n+m(Z)\beta_n,\]
\[\beta''(Z)= \beta'+m(Z)\beta.\] 
Then $|R(Z)-(Z+\beta''_n(Z))|\leq K|\beta'_n-\beta'| + K m(Z) |\beta_n-\beta| =^* K|\beta'_n - \beta' + m(Z) (\beta_n-\beta)| = K|\beta''_n(Z)-\beta''(Z)|$, in short
\begin{equation}\label{eq:estR}
|R(Z)-(Z+\beta''_n(Z))| \leq K|\beta''_n(Z)-\beta''(Z)|\
\end{equation}
where equality (*) comes from the fact that $\beta'_n-\beta'$ and $\beta_n-\beta$ have the same sign (and $m(Z)\geq 0$), which may sound surprising since $\beta'_n$ and $\beta_n$ have opposite signs ($\beta'$ and $\beta$ too), so we justify this by the following explicit computation:
$\beta_n'-\beta' = (q_{k-1} \alpha_n-p_{k-1}) - (q_{k-1}\alpha -p_{k-1}) = q_{k-1}(\alpha_n-\alpha)$, and $\beta_n-\beta = (q_k\alpha_n - p_k) - (q_k\alpha -p_k) = q_k(\alpha_n-\alpha)$. Also:
\[\beta''_n(Z)-\beta''(Z) = (q_{k-1}+m(Z)q_k)(\alpha_n-\alpha).\]

We will need a (rough) bound on $m(Z)$: we treat the case $\beta>0$, the other one is symmetric and yields the same bound. Consider all $n$ big enough so that $|\beta_n|-|\beta_n-\beta|<|\beta|/2$ and $|\beta'_n|-|\beta'_n-\beta'|<|\beta'|/2$.
For such an $n$, by \cref{eq:HJlem}, for $Z\in \ell\cup U$ with $Z\in\dom J$, the map $J$ shifts the real part of $Z$ in the negative direction by at most $3\beta'/2$ and the map $H$ of at least $\beta/2$ in the positive direction and at most $3\beta/2$. Since $Z\in\ell\cup U$ we get $\re(Z) \in [0,3\beta/2]$.
Since $m(Z)$ is the first $m\geq 0$ such that $\Re(H^m J(Z))\in\ell\cup U$, it follows that $\Re(H^{m} J(Z)) < 3\beta/2$, and by the above remarks $\Re(H^{m} J(Z)) > m \beta/2+3\beta'/2$ (recall that $\beta'<0$) so
\[m = m(Z) < 3 - 3\beta'/\beta = 3 (1+|\beta'/\beta|).\]
In particular :
\[|\beta''_n(Z)-\beta''(Z)| \tend 0\]
uniformly w.r.t.\ $Z$ as $n\to+\infty$.

Now let $Z^\Lambda \in \H$ and, as in \Cref{sub:direct}, let $k=k(Z^\Lambda)\in\Z$ bet the unique integer such that $Z^\Lambda+k\in \Lambda(\ell\cup U)$ and define $Z = \Lambda^{-1}(Z^\Lambda+k)$. Recall that we defined there $m=m(Z)\in\N$ such that $R(Z) = H^m \circ J(Z)$ and that
\[\cal RF_n(Z^\Lambda) = \Lambda R(Z) -k-m.\]
Recall also that $\Lambda = L\lambda-iH_0$.

We now proceed to the estimate:
\[\begin{aligned}
\cal RF_n(Z^\Lambda) & = L\lambda R(Z) - iH_0 -k-m 
\\ & = \big(L\lambda R(Z) - \lambda R(Z)\big)  + \big(\lambda R(Z) - \lambda(Z+\beta''(Z))\big) 
\\ & + \lambda(Z+\beta''_n(Z))  -iH_0 -k -m.
\end{aligned}\]
And since $\lambda(X+iY) = X/\beta_n +iY/|\beta_n|- iy_0$ we get
\[\begin{aligned} \lambda(Z + \beta''_n(Z)) & = \lambda(Z) + \beta''_n(Z)/\beta_n
\\ & = \lambda(Z) + \beta'_n/\beta_n + m 
\\ & = \lambda(Z) + \alpha'_n + m.
\end{aligned}\]
From this and $Z^\Lambda + k = \Lambda(Z) = L\lambda(Z) - iH_0$
we get
\[\begin{aligned}
\cal RF_n(Z^\Lambda) - (Z^\Lambda +\alpha'_n)& = \big(L\lambda R(Z) - \lambda R(Z)\big)  + \big(\lambda R(Z) - \lambda(Z+\beta''(Z))\big) 
\\ & + \big(\lambda(Z) - L\lambda(Z)\big) .
\end{aligned}\]
Using the estimates above we get:
\[ \text{If }\im \lambda(Z)\leq 1\text{ and }\im \lambda R(Z)\leq 1\text{ then}\]
\[\begin{aligned}|\cal RF_n(Z^\Lambda) - (Z^\Lambda+\beta'_n/\beta_n)|
& \leq B(1)K|\beta_n-\beta|/|\beta_n| + \frac{1}{|\beta_n|} K |\beta''_n(Z)-\beta''(Z)|
\\ & +  B(1)K|\beta_n-\beta|/|\beta_n|
\end{aligned}\]
i.e.\ 
\[|\cal RF_n(Z^\Lambda) - \big(Z^\Lambda+\alpha'_n)| \leq u_n\]
with
\[ u_n:= \frac{K}{|\beta_n|}\big(2B(1)|\beta_n-\beta| + |\beta''_n(Z)-\beta''(Z)|\big).\]
Let us now estimate $u_n$, using the equivalents mentioned in the present proof.
We use $a_n\lesssim b_n$ on non-negative sequences to mean $\exists c_n\geq 0$ such that for $n$ big enough, $a_n\leq c_n$ and $c_n\sim b_n$.
\[ \begin{aligned}
 u_n \sim &\ \frac{K}{|\beta|} (2B(1) q_k + q_{k-1} + m(Z) q_k) |\alpha_n-\alpha| 
\\ \lesssim &\ \frac{K}{|\beta|} (2B(1) q_k + q_{k-1} + 3(1+|\beta'/\beta|) q_k) |\alpha_n-\alpha|.
\end{aligned}\]
And using the comparison between $\alpha'_n-\alpha'$ and $\alpha_n-\alpha$ given at the beginning of the present proof:
\[|\alpha'_n-\alpha'| = \frac{1}{|\beta\beta_n|} |\alpha_n-\alpha|\]
so using $|\beta|\leq 1/q_{k+1}$ and $|\beta'|\leq 1/q_k$: 
\[\begin{aligned}
\frac{u_n}{|\alpha'_n-\alpha'|} \lesssim &\ K|\beta|\left(q_{k-1} + 2B(1)q_k+3(1+|\beta'/\beta|) q_k\right)
\\ \lesssim &\ K\frac{q_{k-1} + 2B(1)q_k}{q_{k+1}} + 3K(\frac{q_k}{q_{k+1}}+1).
\\ \lesssim &\ (2B(1)+7)K
\end{aligned}\]
Finally: we proved that $\exists c_n$ such that for $n$ big enough, then for all $Z^\Lambda\in \H$ satisfying*
\begin{equation}\label{eq:cond}
\im \lambda(Z)\leq 1\text{ and }\im \lambda R(Z)\leq 1
\end{equation}
we have
\[\frac{|\cal RF_n(Z^\Lambda) - (Z^\Lambda+\alpha'_n)|}{|\alpha'_n-\alpha'|} \leq c_n \sim (2B(1)+7)K.\]
(*): Where $Z$ depends on $Z^\Lambda$ in the way described earlier in the present proof.

We claim that the inequality above extends to all values of $Z^\Lambda\in\H$ by the maximum principle. Indeed the difference $\cal RF_n(Z^\Lambda) - Z^\Lambda-\alpha'_n$ is $\Z$-periodic and is bounded as $\im Z^\Lambda\to +\infty$ because it tends to $0$, so it is enough to prove that $\cal A$ contains the intersection of $\H$ with a neighborhood of $\R$ in $\C$,  where $\cal A$ denotes the set of $Z^\Lambda$ for which \cref{eq:cond} is satisfied.
Recall that $Z=\Lambda^{-1}(Z^\Lambda+k)$, i.e.\ $\lambda(Z)=L^{-1}(Z^\Lambda+k+iH_0)$.
Since $H_0\tend 0$ as $n\to+\infty$ we can assume that $H_0<1/4$.
By \cref{eq:Lpr2}, for $n$ big enough we have $|L^{-1}(W)-W| <1/4$ for all $W\in\dom L$ such that $\Im(W)<1$.
So
\[\text{for }\im Z^\Lambda< 1/4\]
we get $\im (Z^\Lambda + k +iH_0) \leq 2/4$ thus we can apply the estimate on $L$, so $\im L^{-1}(Z^\Lambda+k+iH_0)\leq 3/4$, i.e. $\im \lambda(Z)\leq 3/4$.
Now from \cref{eq:estR} we get $\im R(Z)\leq \im Z + K|\beta''_n(Z)-\beta''(Z)|$ whence
$\im \lambda R(Z)\leq  \im \lambda Z + K|\beta''_n(Z)-\beta''(Z)|/|\beta_n|$,
so
\[\im \lambda R(Z)\leq 3/4 + |\beta''_n(Z)-\beta''(Z)|/|\beta_n|\]
and we have already seen that the right hand side of the sum tends to $0$ uniformly w.r.t.\ $Z$ as $n\to+\infty$. Thus for $n$ big enough we have
$\im \lambda R(Z)\leq 1$.
\end{proof}

The first lemma above implies that we gain a factor $|\beta|$ in estimates of the size of the linearization domain, but by the second lemma we loose a universal factor $D$ in the Lipschitz constant for $\alpha\mapsto F_\alpha$. Moreover this second lemma requires an assumption on the Lipschitz constant for $\alpha\mapsto F'_\alpha$. See later for how we deal with this.

\subsection{Proof of \Cref{lem:main2} for $\alpha=p/q$}\label{sub:apq}

Here we use the results of \Cref{sub:improve} to transfer the case $\alpha=0$ covered in \Cref{sub:a0} to the case $\alpha=p/q$. In the process, the estimate will improve for big values of $q$.

Let us consider one of the two continued fraction expansions of $p/q$ and write it as follows:
\[p/q=[a_0;a_1,\ldots,a_{k+1}].\]
We have $k\geq 0$ since $p/q\notin \Z$. Let $p'/q' = [a_0;a_1,\ldots,a_{k}]$ be its last convergent before $p/q$ itself. Then $p'q-pq' = (-1)^{k+1}$.
In \Cref{lem:main2}, which we are proving, is defined the sequence $\alpha_n= [a_0;a_1,\ldots,a_{k+1},n+1+\sqrt{2}]$ (note that we shifted the index $k$ by one, to match with the notation of \Cref{sub:direct}).
We have $\ds\alpha_n=\frac{p+p'x_n}{q+q'x_n}$ with $x_n=1/(n+1+\sqrt{2})$.
It is important to note that, though $n\to+\infty$, the numbers $q$ and $q'$ remain fixed here.

We now proceed to the order $k+1$ direct renormalization as described in \Cref{sub:direct}.
This yields maps $\cal RF_n$.
Let $\beta = q_k\alpha-p_k$ be the quantity introduced in \Cref{sub:improve}. Here
$\beta = q'\alpha-p' = \frac{q'p-p'q}{q}$ hence
\[\beta = (-1)^k/q.\]
To apply \Cref{lem:nK} we need to control not only the distance from $F_n$ to the rotation but also the distance from $F_n'$ to the constant function $1$.
For this we just apply the Schwarz-Pick inequality:
\[|F_n'(Z)-1| \leq \frac{\sup |F_n-T_{\alpha_n}|}{2\Im Z}\leq \frac{K}{2\im Z} |\alpha_n-\alpha|\]
We restrict $F_n$ to $\Im z>\eps$ for some $\eps\in(0,1/2)$ and then conjugate by the translation by $-i\eps$ to make the domain equal to $\H$. This yields maps $\wt F_n$.
We can apply \Cref{lem:nK} to $\wt F_n$ with the constant $K$ replaced by $K/2\eps$ because the control on $F_n'$ is not as good as the control on $F_n$. The lemma gives that the maps $\cal R\wt F_n$ satisfy the hypotheses of \Cref{lem:main2} with a Lipschitz constant of $DK/2\eps$ where $D>1$ is a universal constant.
Their rotation number $\alpha'_n$ is equal to $\alpha'_n = x_n$, which tends to $\alpha'=0$. So by the case $\alpha=0$ of \Cref{lem:main2} covered in \Cref{sub:a0} we get that
\[\limsup h(\cal R \wt F_n) \leq C(DK/2\eps) = C_0 + \frac{1}{2\pi}\log\frac{DK}{2\eps}\]
for some $C_0>0$.
By \Cref{lem:improve}
\[\limsup h(\wt F_n) \leq \frac{1}{q}\limsup h(\cal R\wt F_n)\]
and since 
\[\limsup h(F_n) \leq \eps + \limsup h(\wt F_n)\]
we get
\[\limsup h(F_n) \leq \eps + \frac{C_0}{q} + \frac{1}{2\pi q} \log \frac{DK}{2\eps}.\]
Optimizing the choice of $\eps\in(0,1/2)$ we get
\[\limsup h(F_n) \leq \frac{C_0}{q} + \frac{1}{2\pi q} \left( 1 + \log (DK\pi q)\right).\]

In this proof we iterated renormalization: we did an order $k+1$ direct renormalization followed by an implicit order $1$ renormalization when using the result of \Cref{sub:a0}.
In fact, order $k+1$ direct renormalization and iterating $k+1$ times order $1$ renormalization are closely related procedures, so morally we could consider we did $k+2$ renormalizations.

\subsection{Proof of  \Cref{lem:main2} for $\alpha\in\R\setminus \Q$}\label{sub:airr}

It will be carried out in two steps.

\smallskip

Recall that, denoting $\alpha = [a_0;a_1,a_2,\ldots]$ we defined
\[\alpha_n = [a_0;a_1,\ldots,a_n,1+a_{n+1},1+\sqrt{2}].\]

\begin{enumerate}
\item
We first prove a weak version of the lemma.
For this we use a first direct renormalization at order $n+1$ for $\alpha_n$, which brings the rotation number $\alpha_n$ of $F_n$ to $\sqrt 2\bmod\Z$ for $\cal RF_n$.
\item
Then, if necessary, we enhance the weak version using a prior renormalization of the type of \Cref{sub:improve}, at some fixed but high order.
\end{enumerate}

So let us apply order $n+1$ direct renormalization to $F_n$ as described in \Cref{sub:direct}.
Be careful with the notations: what is called $\alpha$ in that section is called $\alpha_n$ here, and the integer $k$ in that section is so that $k=n$.
A pair of maps is introduced, which we recall:
\begin{eqnarray*}
J &=& T^{-p_{n-1}}\circ F^{q_{n-1}},\\
H &=& T^{-p_{n}}\circ F^{q_{n}} .
\end{eqnarray*}
Their respective rotation numbers are
\begin{eqnarray*}
\beta_n' &=&  q_{n-1}\alpha_n-p_{n-1} ,\\
\beta_n &=& q_{n}\alpha_n-p_{n} .
\end{eqnarray*}
Then a height $y_0$ must be provided satisfying conditions that we recall too:
\begin{itemize}
\item the domain of definition of $H$ contains $\Im Z>y_0$, and hence the domain of $J$ also does;
\item $\forall Z\text{ with }\Im Z>y_0$:
\begin{align*}
\qquad |H(Z)-Z-\beta_n|&\leq |\beta_n|/10\text{,}
& |H'(Z)-1|&\leq 1/10,
\\
\qquad |J(Z)-Z-\beta_n'|&\leq |\beta_n|/10
\text{,}
& |J'(Z)-1|&\leq 1/10,
\end{align*}%
\end{itemize}


Let us proceed to some estimates on rotation numbers.
According to \Cref{sub:remind} and some expression manipulation
\[\beta_n = \frac{(-1)^n}{q_{n+1}+q_{n}\sqrt{2}}.\]
Also,
\[\alpha_n-\alpha = (-1)^{n+1} \frac{\sqrt{2}-x_n}{(q_{n+1}+q_n\sqrt{2})(q_{n+1}+q_n x_n)},\]
where
\[x_n:=[0;a_{n+2},a_{n+3},\ldots]\in(0,1).\]

Let
\[M = q_n K |\alpha_n-\alpha| < K\frac{q_n}{q_{n+1}^2}\sqrt 2\]
and note that $M\tend 0$ as $n\to+\infty$.
The map $H$ is defined at least on $\Im Z> M$ and satisfies there that $H$ differs from $T_{\beta_n}$ by at most $M$.
However the inequality $M\leq |\beta_n|/10$ does not necessarily hold.
By the above
\[ M/|\beta_n| = K \frac{q_n (\sqrt2 -x_n)}{q_{n+1}+q_n x_n},\]
so
\[ \frac{ K q_n}{q_{n+1}} \cdot \frac{\sqrt 2-1}{2} \leq M/|\beta_n| \leq \frac{ K q_n }{q_{n+1}} \sqrt 2.\]
The quotient $q_{n}/q_{n+1} = q_n/(a_{n+1}q_n+q_{n-1})$ is less than one and can be very small if $a_{n+1}$ is big, but it can also be very close to $1$ if $a_{n+1}=1$, depending on the continued fraction expansion of $\alpha_n$. 

Now since $H$ commutes with $T_1$ and $H(Z)-Z$ tends to $\beta_n$ as $\im Z\tend+\infty$, we can improve the estimate on $H$ as follows:
\[\im Z> M \implies |H(Z)-T_{\beta_n}(Z)|\leq e^{-2\pi(\Im Z-M)} M.\]
Hence in all cases the inequality $|H(Z)-T_{\beta_n}(Z)|\leq |\beta_n|/10$ will hold if $\Im Z> y_0$ with
\[y_0 = M +  \frac{1}{2\pi} \log^+ \frac{10M}{|\beta_n|}\]
denoting $\log^+ x = \max(0,\log x)$.
We have
\[y_0\leq M + \frac{1}{2\pi} \log(10K\sqrt2).\]

A similar analysis holds for $J$ with \emph{better} estimates, so we can just take the same constants as above.

The rotation number of $\cal R F_n$ is $-(a_{n+1} + \sqrt 2)\equiv -\sqrt 2 \bmod \Z$. By  \Cref{lem:s2}, $K(\cal R F_n)$ contains $\Im Z > C_{\sqrt 2}$.
We claim that the return map $R$, see \Cref{sub:direct}, is defined on $(\ell\cup U)\cap``\Im Z > y_1"$ with
\[y_1 := y_0 + \frac{3}{10}|\beta_n| + (|\beta_n'| +\frac{1}{10}|\beta_n|)\tan \theta\]
where $\theta =\arcsin (1/10)$.

\begin{proof}
We justify it in the case $\beta_n>0$, the other case being completely similar.
If $\beta_n>0$ then $\beta'_n<0$.
We have $\re J(Z) \geq \beta'_n - \beta_n/10 = -(|\beta'_n|+|\beta_n|/10)$.
By the cone property (see the paragraph between \cref{eq:oneH} and \Cref{lem:fdf2}), the $H$-orbit stays in a cone of apex $J(Z)$ and half opening angle $\theta$ and central axis $J(Z)+\R_+$, as long as it remains in $\Im Z>y_0$.
The condition $\Im Z> y_1$ ensures that $\Im J(z) > y_1-|\beta_n|/10$ and that the orbit will stay above $y_0$ as long as it has not passed the imaginary axis. Before passing it it stays above height $y_1-|\beta_n|/10-(|\beta_n'| +\frac{1}{10}|\beta_n|)\tan \theta = y_0 + \frac{2}{10}|\beta_n|$.
It will pass the imaginary axis (because the real part increases by a definite amount) and when it does, the imaginary part will be at least $y_0 + |\beta_n|/10$, which ensures that it will belong to $\ell\cup U$ (the argument is similar to the paragraph marked (*) on page~\pageref{star1}).
\end{proof}

Let $\lambda$, $L$, $H_0$ and $\Lambda$ be as in \Cref{sub:direct}.
We have $\lambda(X+iY) = X/\beta_n+i(Y-y_0)/|\beta_n|$ and $\Lambda = L\lambda -iH_0$.
We claim that every point $Z$ in $(\ell\cup U) \cap ``\Im Z> y_2"$ with
\[ y_2 := y_1 + (A\max(C_{\sqrt 2},C_1)+C_1+\frac{1}{10})|\beta_n|
\]
is mapped by $\Lambda$ to a point of imaginary part $> C_{\sqrt 2}$,
where $A$ and $C_1$ are the constants in \Cref{lem:gluev3}.

\begin{proof}
Consider the set $\cal A = \Z+L\lambda(\dom R)\subset \H$: it follows from the definition of $H_0$ that $H_0 = \sup_{Z\in\partial \cal A} \im Z$ where the boundary is relative to $\C$.
Let $w'\in \partial \cal A$ with $\im w'=H_0$. By subtracting an integer to $w'$ we can assume that $w'\in L\lambda(\ell\cup U)$.
Assume now that $\im Z>y_2$ defined above.
We will apply \Cref{lem:gluev3} to $W=\lambda(Z)$ and $W'=L^{-1}(w') \in \lambda (\ell\cup U)$.

Let $Z' = \lambda^{-1}W' \in \ell\cup U$.
If $Z'\in U$ then we have $\im \lambda^{-1} W' \leq y_1$ for otherwise $Z'$
would have a neighbourhood $V$ contained in $\dom R$
hence its image by $L\lambda$ would belong to the interior of $\cal A$, contradicting the definition of $w'$.
For similar reasons, if $Z' \in \ell$ then we have $\im Z' \leq y_1 + |\beta_n|/10$: recall that the map $L$ extends to a neighbourhood of $\ell^\lambda = \lambda(\ell)$ and a neighbourhood of $H^\lambda(\ell^\lambda)$ and satisfies $L\circ H^\lambda = L +1$ near $\ell^\lambda$.
The margin $|\beta_n|/10$ is there to ensure that both $V$ and $H(V)$ are above $y_1$ for a small enough neighborhood $V$ of any $Z$ with $\Im Z>y_1+|\beta_n|/10$. Hence $\Im Z'\leq y_1+|\beta_n|/10$ in all cases.

Hence $\im W - \im W' > (y_2-y_1-|\beta_n|/10)/|\beta_n| = A\max(C_{\sqrt 2},C_1)+C_1$.
\Cref{lem:gluev3} gives $|\im L(W)-\im L(W')|\geq (|\Im W -\im W'| -C_1)/A > \max(C_1,C_{\sqrt 2})$.
In particular $|\im L(W)-\im L(W')| > C_1$ so by the second part of \Cref{lem:gluev3}, the quantities $\im L(W)-\im L(W')$ and $\im W-\im W'$ have the same sign.
We thus get that $\im \Lambda(Z) = \im L\lambda(Z) - H_0 = \im L(W) - \im L(W') > C_{\sqrt 2}$.
\end{proof}

Under these conditions on $Z$, it follows that $\Lambda(Z)\in K(\cal RF_n)$. Hence $Z\in K(F_n)$.
The same analysis can be applied to the conjugate of $F_n$ by a horizontal translation $T_x(Z)=Z+x$. 
Write $\ell_x$ and $U_x$ the sets constructed from $T_x^{-1}\circ F_n\circ T_x$ instead of $F_n$. (It turns out that $\ell_x = (iy_0,+i\infty) = \ell$ is independent of $x$.)
For any point $Z\in \H$ with $\im Z>y_2$, there is a translation $T_x$ so that $T_x^{-1} Z\in \ell_x \cup U_x$ (in fact take $x=\re Z$, then $T_x^{-1} Z\in\ell=\ell_x$). Hence $T_x^{-1} Z\in K(T_x^{-1}\circ F_n\circ T_x)$, which is equivalent to the statement $Z\in K(F_n)$. It follows from this analysis that
\[h(F_n)\leq y_2.\]
The quantity $y_2$ depends on $n$ and we have $y_2-y_0\tend 0$ hence  $\limsup y_2 = \limsup y_0 \leq \frac{1}{2\pi}\log(10 K\sqrt 2)$.

As a consequence we have proved the following (weak) asymptotic estimate
\begin{equation}\label{eq:weak}
\limsup_{n\to+\infty} h(F_n)\leq \frac{1}{2\pi}\log(10K\sqrt2).
\end{equation}
The constant $\sqrt 2$ here has nothing to do with our choice of rotation numbers involving $\sqrt 2$.

We now enhance this estimate by a prior renormalization of fixed---yet high---order. 
More precisely we temporarily fix some $k\geq 0$ and $\eps>0$.
Let $\wt F_\eps$ be the map obtained by conjugating $F_\eps$ by the translation by $-i\eps$ and then restricting to $\H$.
Then $h(F_n)\leq \eps + h(\wt F_n)$.
By the Schwarz-Pick inequality we have
\[|F_n'(Z)-1|\leq \frac{K|\alpha_n-\alpha|}{2\im Z},\]
and this implies
\[\sup_{\H}|\wt F_n'-1|\leq \frac{K}{2\eps} \big|\alpha_n-\alpha\big|.\]
Now for $n>k$ let $\cal RF_n$ be the order $k+1$ direct renormalization of $\wt F_n$ provided by \Cref{lem:improve}.
According to this lemma,
\[\limsup h(\wt F_n)\leq |\beta|\limsup h(\cal R\wt F_n)\]
where $\beta = \beta(k) = q_k\alpha-p_k$ (recall that for fixed $k$, the first $k$ convergents of $\alpha$ and $\alpha'$ coincide for large enough $n$).
Now by \Cref{lem:nK} with $K$ replaced by $K/2\eps$ (we assume $\eps<1/2$), the sequence $\cal R\wt F_n$ satisfies
\[\sup_{\Im Z>0}|\cal R\tilde F_n(Z)-T_{\alpha'_n}(Z)| \leq \frac{D K}{2\eps} |\alpha'_n-\alpha'|\]
where $\alpha'_n$ and $\alpha'$ are the respective rotation numbers of $\cal R\wt F_n$ and of its limit.
We have $\alpha' = -[a_{k+1},a_{k+2},\ldots]$
and $\alpha'_n = -[a_{k+1},\ldots,a_n,1+a_{n+1},1+\sqrt 2]$. We are thus in the situation of \Cref{lem:main2} for $-\alpha'$ in place of $\alpha$ and $s\circ(\cal R\tilde F_n)\circ s$ in place of $F_n$, where $s(X+iY)=-X+iY$.
By the weak estimate above \Cref{eq:weak} we thus have
\[
\limsup_{n\to+\infty} h(\cal R\wt F_n)\leq \frac{1}{2\pi}\log\frac{5DK\sqrt2}{\eps}
\]
and thus
\[
\limsup_{n\to+\infty} h(F_n)\leq \eps + |\beta(k)| \frac{1}{2\pi}\log\frac{5DK\sqrt2}{\eps}.
\]
Now this is valid for all $k>0$ and since $\beta(k)\tend 0$ as $k\to+\infty$ and neither $D$, $K$, nor $\eps$ depend on $k$, we get
\[
\limsup_{n\to+\infty} h(F_n)\leq \eps.
\]
Since this is valid for all $\eps \in (0,1/2)$ we conclude:
\[\limsup_{n\to+\infty} h(F_n)\leq 0.\]
This ends the proof of \Cref{lem:main2}.

For this case, as in the case $\alpha=p/q$, we used a direct renormalization of a direct renormalization, though in a more subtle way.

\appendix

\section{Analytic degenerate families}\label{app}

An obvious way of obtaining degenerate families is to conjugate the family of rigid rotations $R_\alpha(z)=e^{2\pi i\alpha}$ by a family of varying analytic diffeomorphisms. The next lemma shows that in the case of families depending analytically on $\alpha$, this is the only way.

\begin{proposition}
Let $I$ be an open subset of $\R$. Assume $\{f_\a:\D\to
\C\}_{\a\in I}$ is an $\R$-analytic family of maps which fix $0$
with multiplier $e^{2i\pi\a}$. The following are equivalent:
\begin{enumerate}
\item\label{item:p1} the family $\{f_\a\}_{\a\in I}$ is degenerate;
\item\label{item:p2} there exist an open interval $J\subset I$, a real
$\delta>0$ and an analytic map $\phi:J\times B(0,\delta)\to \C$
such that for all $\a\in J$, $\phi_\a(z)=z+{\cal O}(z^2)$ and for
all $z\in B(0,\delta)$, $f_\a = \phi_\a\circ R_\a \circ \phi_\a^{-1}$
(with $\phi_\a= \phi(\a,\cdot)$).
\end{enumerate}
\end{proposition}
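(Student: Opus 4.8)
The implication \eqref{item:p2}$\Rightarrow$\eqref{item:p1} is immediate: if $f_\a=\phi_\a\circ R_\a\circ\phi_\a^{-1}$ on $B(0,\delta)$ for every $\a\in J$, then every such $f_\a$ is linearizable, so the set of non-linearizable parameters misses the open interval $J$ and is not dense, hence the family is degenerate by \Cref{def:dege}. The substance is \eqref{item:p1}$\Rightarrow$\eqref{item:p2}. Here ``$\R$-analytic family'' means $(\a,z)\mapsto f_\a(z)$ extends to a holomorphic $F(\a,z)$ on $\Omega_0\times\D$ for some open $\Omega_0\subset\C$ with $\Omega_0\cap\R=I$; after shrinking, $F$ is bounded on $\overline{\Omega_0'}\times\overline{B(0,r_0)}$ for suitable $\Omega_0'\Subset\Omega_0$, $r_0<1$, so its Taylor coefficients $c_k(\a)=\tfrac1{k!}\partial_z^kF(\a,0)$ satisfy $|c_k(\a)|\le Mr_0^{-k}$ there, while $\partial_zF(\a,0)=e^{2\pi i\a}=:\mu(\a)$ is entire and $|\mu(\a)|\ne1$ exactly when $\a\notin\R$. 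First I would extract a good subinterval: degeneracy gives an open $J_0\subset I$ on which all $f_\a$ are linearizable, hence $r(\a)>0$ on $J_0$ (see \Cref{sub:sd}); since $r\le1$ and $r$ is upper semicontinuous (\Cref{lem:usc}), the closed sets $\{r\ge1/m\}$ cover $J_0$, so by Baire one of them has non-empty interior, producing a closed interval $J=[u,v]\subset J_0$ and $c>0$ with $r(\a)\ge c$ on $J$. For real $\a\in J$ the linearizer $\phi_\a$ (\Cref{def:phi}) is then univalent on $B(0,c)$ with values in $\D$.

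Next I would study the Taylor coefficients of $\phi_\a(z)=z+\sum_{n\ge2}b_n(\a)z^n$. Expanding $\phi_\a\circ R_\a=f_\a\circ\phi_\a$ degree by degree gives the classical linearization recursion
\[(\mu(\a)^n-\mu(\a))\,b_n(\a)=P_n\big(c_2(\a),\dots,c_n(\a),b_2(\a),\dots,b_{n-1}(\a)\big),\]
with $P_n$ a universal polynomial with non-negative coefficients. Reading this as a \emph{definition} for complex $\a$, the $b_n$ are meromorphic on $\Omega_0'$ with poles only among the zeros of $\mu(\a)^n-\mu(\a)$, all of which lie on $\R$ (at $\a\in\tfrac1{n-1}\Z$). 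By the Cauchy estimates $|b_n(\a)|\le c^{-n}$ for every real $\a\in J$, and since $J\setminus\Q$ is dense in $J$, no $b_n$ can have a pole on $J$; hence, fixing a small $h>0$, \emph{all} the $b_n$ are holomorphic on one fixed rectangle $\Omega'=(u-\eps,v+\eps)+i(-h,h)$, for a suitable $\eps>0$.

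The crux — and the step I expect to be the main obstacle — is a uniform bound $|b_n(\a)|\le e^{Cn}$ on a complex neighborhood $\Omega''$ of an interval strictly inside $J$, with $C$ independent of $n$. Off the real axis $|\mu(\a)|\ne1$, so $F(\a,\cdot)$ has a Kœnigs linearizer whose unique formal linearizer is exactly $\sum b_n(\a)z^n$; the standard majorant-series estimate from the proof of Kœnigs' theorem, fed with $|c_k(\a)|\le Mr_0^{-k}$ and the small-divisor lower bound $\inf_{m\ge2}|\mu(\a)^m-\mu(\a)|\ge|\mu(\a)|\,\big||\mu(\a)|-1\big|\gtrsim|\Im\a|$ for $|\Im\a|$ small, yields $|b_n(\a)|\le\big(c_0\min(|\Im\a|,1)\big)^{-n}$ for large $n$ and a universal $c_0>0$. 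In particular $|b_n|\le\eta^{-n}$ on $\{\Im\a=\pm h/2\}$ with $\eta=c_0\min(h/2,1)$, whereas $|b_n|\le c^{-n}$ on $J\subset\R$. Since $\log|b_n|$ is subharmonic on $\Omega'$, a two-constants (harmonic-measure) estimate on the thin rectangle $(u,v)+i(-h/2,h/2)$ — using on its vertical sides the mixed bound $|b_n(u+it)|\le\max(c^{-n},(c_0|t|)^{-n})$ and $\int_{-h/2}^{h/2}\log(1/|t|)\,dt<\infty$ — produces $\tfrac1n\log|b_n(\a)|\le C$ uniformly in $n$ for $\a$ in a slightly smaller rectangle $\Omega''$; enlarging $C$ to absorb the finitely many small $n$, one gets $|b_n|\le e^{Cn}$ on $\Omega''$.

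Granting this, $\phi(\a,z):=\sum_{n\ge1}b_n(\a)z^n$ converges uniformly on $\Omega''\times B(0,\delta_1)$ with $\delta_1=\tfrac12e^{-C}$, defining a holomorphic map there with $\phi(\a,z)=z+\cal O(z^2)$. For real irrational $\a\in J''$ it agrees with the genuine linearizer $\phi_\a$, so $F(\a,\phi(\a,z))=\phi(\a,e^{2\pi i\a}z)$ holds for such $\a$ and small $z$; both sides being holomorphic in $\a$ and coinciding on the dense subset $J''\setminus\Q$ of the real interval $J''$, the identity theorem extends it to all of $\Omega''\times B(0,\delta_2)$, in particular to every real $\a\in J''$ (rationals included). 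The uniform control of the $b_n$ makes $\partial_z\phi(\a,\cdot)$ close to $1$, hence $z\mapsto\phi(\a,z)$ univalent, on a uniform disk $B(0,\delta_3)$; its inverse is jointly analytic, and after shrinking $\delta_3$ to a suitable $\delta$ we obtain $f_\a=\phi_\a\circ R_\a\circ\phi_\a^{-1}$ on $B(0,\delta)$ for all $\a$ in the open interval $J''$, which is \eqref{item:p2}. All of this, apart from the third paragraph, is routine manipulation of the linearization recursion plus the identity theorem; the genuine difficulty is converting the pointwise-on-$\R$ bound and the boundary-degenerating Kœnigs bound into a single subexponential estimate over a true two-dimensional neighborhood.
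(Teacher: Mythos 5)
Your proof is correct, but the two central steps are handled by genuinely different means than the paper. First, to show the coefficients $b_n(\a)$ of the formal linearizer are actually holomorphic (no poles) on a complex neighborhood of a real interval, you invoke Baire's theorem to manufacture a subinterval $J$ on which $r(\a)\geq c>0$, and then kill the potential poles at $p/(n-1)\in J$ by the bound $|b_n|\leq c^{-n}$ on the dense set of irrationals. The paper instead avoids any Baire step and proves pole-freeness (\Cref{lem:b1}) by an algebraic argument: it verifies via \Cref{sublem:one} and \Cref{sublem:two} that the numerator in the linearization recursion vanishes identically at the rational $p/(n-1)$, using only that $f_{p/q}$ is linearizable (not that $r$ is uniformly bounded below). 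Second, for the crux — a subexponential bound $|b_n|\leq e^{Cn}$ on a complex rectangle — you run a two-constants/subharmonicity argument, interpolating between the boundary-degenerating K\oe nigs bound $|b_n|\lesssim(c_0|\Im\a|)^{-n}$ and the bound $c^{-n}$ on the real segment; this is workable but requires some care at the corners where the boundary data blows up (the integrability of $\log(1/|t|)$ is what saves you). The paper sidesteps subharmonicity entirely: \Cref{lem:b3} shows, via a Koebe one-quarter argument applied to a K\oe nigs linearizer of the conjugated map $g_\eps$, that the linearization radius does \emph{not} degenerate as $\a_0+i\eps\to\a_0$ when $r(\a_0)>0$; it then picks two Brjuno endpoints, gets $s\geq\delta>0$ on the full boundary of the complex box, and concludes by Cauchy estimates plus the maximum principle for the holomorphic $a_n$. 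Your route is more computational and pays for the harmonic-measure machinery but avoids the Sublemma computation; the paper's route is tidier but leans on the rather delicate \Cref{lem:b1}. Both reach the same place.
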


\begin{proof} (\ref{item:p2}) $\implies$ (\ref{item:p1}).  Obvious.

(\ref{item:p1}) $\implies$ (\ref{item:p2}).  Let $U$ be a domain intersecting $\R$ in an
interval $J$ contained in $I$ such that $f_\a$ is defined for all
$\a \in U$ and is linearizable for every $\a \in J \cap \Q$. Let
$\phi_\a$, $\a \in U \setminus \Q$ be the (uniquely defined)
formal linearization of $f_\a$, so $\phi_\a$ is a formal power
series
$$\phi_\a(z)=z+\sum_{n=2}^\infty a_n(\a) z^n$$
satisfying
\begin{equation}\label{eq:lin}
\phi_a \circ R_\a = f_\a \circ \phi_a
\end{equation}
 formally.  The $a_n$
can be found recursively from the power series expansion of
$f_\a$ (see \cite{Pf} or below), and from the formula one obtains it follows that they are, in general, meromorphic functions of $\a \in U$, with possible poles when the multiplier is a root of unity of order $\leq n$, i.e.\ when $\alpha =p/q$ with $1\leq q\leq n$.

Recall that we assumed that $f_\a$ is linearizable for all $\a\in J\cap \Q$. Let us prove that this implies that the functions $a_n(\a)$ have no poles.
\begin{lemma}\label{lem:b1}
The function $\alpha\mapsto a_n(\alpha)$ has a holomorphic extension to $U$.
\end{lemma}
\begin{proof}
We will proceed by induction on $n$.
We have $a_1(\alpha)=1$, which initializes the recurrence.
Let $n>1$ and assume that for all $k<n$, the map $\alpha\mapsto a_k(\alpha)$ has a holomorphic extension to $U$. Let $\rho(\a)= e^{i 2\pi \alpha}$ be the multiplier and let $b_n(\a)$ such that $f_\a = \rho(\a)+\sum_{m\geq 2} b_m(\a) z^m$. 
\Cref{eq:lin} then reads
\[
\sum_{i\geq 1} a_i(\a) \rho(\a)^i z^i = \rho(\a)\sum_{i\geq 1} a_i(\a) z^i + \sum_{m\geq 2} b_m (\sum_{i\geq 1} a_i(\a) z^i)^m
\]
In particular for the coefficient in $z^n$:
\[
\rho(\a)^n  a_n(\a)= \rho(\a) a_n(\a) + \sum_{m=2}^{n} b_m(\a) \sum_{\ldots} a_{i_1}(\a)\cdots a_{i_m}(\a),
\]
where the $\sum_{\ldots}$ is over all the $m$-uplets of positive integers whose sum are equal to $n$.
We rewrite the last line as follows:
\begin{equation}
\label{eq:lin2} (\rho(\a)^n-\rho(\a))  a_n(\a) = P_{b(\a),n}(a_2(\a),\ldots,a_{n-1}(\a))
\end{equation}
where $ P_{b(\a),n}(x_2,\ldots,x_{n-1}) = \sum_{m=2}^{n} b_m(\a) \sum_{\ldots} x_{i_1}\cdots x_{i_m}$.
By the induction hypothesis, the right hand side of \cref{eq:lin2} is holomorphic, hence the function $a_n(\a)$ has \emph{at most simple poles}, situated at $\alpha =p/(n-1)$, $p\in\Z$. If we prove that the right hand side of \cref{eq:lin2} vanishes for $\a=p/(n-1)$, then we will have completed the induction.
Now we must be careful: by assumption for every $p/q$ there is a solution $\phi$ to $\phi \circ R_{p/q} = f_{p/q}\circ \phi$; however this solution is not unique.
\begin{sublemma}\label{sublem:one}
If $\zeta$ is a formal power series such that $\zeta \circ R_{p/q} - R_{p/q} \circ \zeta = \cal O(z^m)$ and if $q|(m-1)$ then $\zeta \circ R_{p/q} - R_{p/q} \circ \zeta = \cal O(z^{m+1})$.
\end{sublemma}
\proof By a straightforward computation, for \emph{any} formal power series,
all the coefficients of $\zeta \circ R_{p/q} - R_{p/q} \circ \zeta$ with order in $1+q\Z$ vanish.
\ \hfill$\blacksquare$\par\medskip
\begin{sublemma}\label{sublem:two}
Let $f(z) = e^{2\pi ip/q} z + \sum_{n\geq 2} b_n z^n$ be a holomorphic or formal power series, and assume that there is a formal power series $\tilde \phi = z +\sum_{n\geq 2} \tilde a_n z^n$ solution of $\tilde\phi \circ R_{p/q} = f \circ \tilde\phi$ and another formal power series $\phi = z +\sum_{n\geq 2} a_n z^n$ such that $\phi \circ R_{p/q} - f \circ \phi = \cal O(z^m)$ for some $m\geq 2$. This depends only on $a_2$, \ldots, $a_{m-1}$; fix these values and consider the equation $\phi \circ R_{p/q} - f \circ \phi = \cal O(z^{m+1})$ with unknown $a_{m}$. Assume that $p/q$ is in its lowest terms.
Then
\begin{enumerate}
\item\label{item:sl1} if $m-1$ is not a multiple of $q$, there is a unique solution $a_m$;
\item\label{item:sl2} if $m-1$ is a multiple of $q$, all $a_m\in \C$ are solutions: in other words $P_{b,m}(a_2,\ldots,a_{m-1}) = 0$.
\end{enumerate}
\end{sublemma}
\proof
Case (\ref{item:sl1}) is immediate in view of \cref{eq:lin2}. Assume we are in case (\ref{item:sl2}).
The formal power series $\zeta = \phi^{-1} \circ \tilde \phi$ commutes with $R_{p/q}$ up to order $m-1$ included, and thus by \Cref{sublem:one} up to order $m$ included. It follows that $\phi \circ R_{p/q} - f \circ \phi = \cal O(z^{m+1})$.
\ \hfill$\blacksquare$\par\medskip
It follows from Case (\ref{item:sl2}) of the above lemma applied to the reduced form of $p/(n-1)$ that $P_{b(\a),n}(a_2(\a),\ldots,a_{n-1}(\a))=0$.
This cancels the possible simple pole to $a_n(\a)$ at $\a=p/(n-1)$ and proves heredity of the induction hypothesis. \Cref{lem:b1} follows. 
\end{proof}

If $\a=p/q\in\Q$, we let $\phi_a = z+ \sum_{n\geq 2} a_n(\a) z^n$ for the holomorphic extension of the functions $a_n(\a)$ at $a=p/q$.

For $\a\in U$ not necessarily real, let $R(\a)\in [0,+\infty]$ be the radius of convergence of $\phi_a$, $s(\a)\leq R(a)$ be the radius of the maximal disk centered at $0$
around $0$ where $\phi_\a$ takes values in $\D$,
and $r(\a)\leq s(\a)$ the maximal radius disk of such a disk for which moreover $\phi_a$ is injective. It is easy to see that $s(\a)$ is locally bounded away
from zero in $U \setminus J$.\footnote{This also holds for $r(\a)$.}

\begin{lemma}\label{lem:b3}
If for $\a_0 \in J$ we
have $r(\a_0)>0$, then $\ds \liminf_{\eps \to 0} s(\a_0+i \eps)>0$.
\end{lemma}
\begin{proof}  Set
$$g_\eps=\phi^{-1}\circ f_{\a_0+i\eps}\circ
\phi$$
where $\phi$ denotes the restriction of $\phi_{\a_0}$ to $r(\a_0)\D$.
The domain of $g_\eps$ tends to $r(\a_0)\D$ as $\eps\to 0$ and $g_\eps\tend R_{\a_0}$ uniformly locally.
Also,
$$g_\eps(z)=e^{-2\pi \eps+2i\pi \a_0}z+{\cal O}(\eps
z^2).$$
In particular, $|g_\eps(z)| = (1-2\pi\eps) |z| + \cal O(\eps z^2)$.
So, there exists $0<r_0<r(\a_0)$ so that
\begin{itemize}
\item when $|z|<r_0$ and $\eps>0$, $|g_\eps(z)|<|z|$ and \item when
$|z|<r_0$ and $\eps<0$, $|g_\eps(z)|>|z|$. \end{itemize} For $\eps$
sufficiently close to $0$, $g_\eps$ is univalent on $B(0,r_0)$. So,
there is a univalent map $\psi_\alpha:\ B(0,r_0)\to \C$ which conjugates $g_\eps$
to $R_{\a_0+i\eps}$. The map $\tilde\phi := \phi_{\alpha_0} \circ \psi_{\alpha}^{-1}$ satisfies the equation $\tilde\phi\circ R_\alpha = f\circ \tilde\phi$ near $0$ so by uniqueness has the same expansion as $\phi_\alpha$.
It follows from the Koebe One Quarter Theorem applied to $\psi_{\alpha}$ that
as $\eps\to 0$,
$$\liminf r(\a_0+i\eps)\geq r_0/4.$$
Since $s\geq r$, the lemma follows.
\end{proof}

Consider two Brjuno numbers $\a_0<\a_1$ in $J$ (so that $r(\a_0)>0$ and $r(\a_1)>0$) and $y>0$ small enough so that the box $U'$ of equation $\a_0<\Re z<\a_1$ and $|\im z|<y$ is compactly contained in $U$.
By \Cref{lem:b3} we have $s|\partial U' \geq
\delta>0$. By Cauchy's formula applied to $\phi_a$ in the disk $s(\a)\D$, we see that $a_n(\a) \leq \delta^{-n}$ holds for all $s\in\partial U'$ and thus by the maximum principle holds for all $s\in U'$. It follows that $(\a,z) \to \phi_\a(z)$ is defined and
holomorphic $U'\times B(0,\delta) \to \D$. It
satisfies $\phi_\a(z)=z+{\cal O}(z^2)$ and $\phi_\a \circ R_\a=f_\a \circ \phi_\a$ by analytic continuation.
This proves claim (\ref{item:p2}) for $J=(\a_0,\a_1)$.
\end{proof}

\section{General statement}\label{app:grl}

We recall here the main statement in \cite{BC}, adapted it to our situation.

\begin{notation}
Let $X$ and $Y$ be topological spaces and $X\subset Y$. We
write $X\subset_{\on{0}} Y$ if the canonical injection
$X\hookrightarrow Y$ is continuous. If moreover $X$ is a normed vector space and $Y$ a Fréchet space,\footnote{We do not assume that the norm on $X$ and the distance on $Y$ are related.} we write $X\subset_{\on{c}} Y$ if every bounded set in $X$ has compact closure in $Y$.
\end{notation}

In the theorem below we assume, as in most of the present article, that $I\subset \R$ is an open interval and that $f_\alpha:\D\to \C$ is a family of analytic maps that depends continously on $\alpha \in I$, with
\[f_\alpha(z) = e^{2\pi i\alpha}z+\cal O(z^2).\]
Below we use $r(\alpha)$ from \Cref{def:phi} in the present article, and $\phi_\alpha$ from \Cref{def:confrad}. The notation $C^0$ refers to the set of holomorphic maps on $\D$ that have a continuous extension to $\ov \D$, endowed with the sup-norm. The notation $C^\omega$ refers to the set of holomorphic maps on $\D$ that have a holomorphic extension to a neighborhood of $\ov \D$ in $\C$.

\begin{theorem}\label{theo_general}
Let $F$ be any Fréchet space such that $C^\omega \subset F
\subset_{\on{0}} C^0,$ and let \[B \subset_{\on{c}} F\] be a Banach
space.
If the family $(f_\alpha)$ is \emph{non-degenerate} (see \Cref{def:dege}) and the dependence on $\alpha$ is \emph{Lipschitz} then there exists a Brjuno number $\alpha$ such that
\begin{itemize}
\item $\partial\Delta_\alpha$ is compactly contained in $\D$,
\item the map $z \mapsto \phi_{\alpha}(r(\alpha) z)$ belongs to
$F$ but not to $B$.
\end{itemize}
Equivalently, one can replace the Banach space $B\subset_{\on{c}}
F$ by a countable union of Banach spaces $B_n\subset_{\on{c}} F$
or by a countable union of compact sets $K_n\subset F$.
\end{theorem}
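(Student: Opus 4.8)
The plan is to derive this from the main theorem of \cite{BC} together with \Cref{cor:follow}. The first step is to reduce to the case where every $f_\alpha$ is univalent. Fix $\alpha_0\in I$: since $f_{\alpha_0}$ is univalent on some disk $2\eps\D$, Hurwitz's theorem and the continuity of $\alpha\mapsto f_\alpha$ give an open subinterval $I'\ni\alpha_0$ of $I$ on which $f_\alpha$ is univalent on $\eps\D$. Replace $I$ by $I'$ and $f_\alpha$ by $g_\alpha(z):=\eps^{-1}f_\alpha(\eps z)$, a univalent map $\D\to\C$. It still has expansion $e^{2\pi i\alpha}z+\cal O(z^2)$, still depends continuously on $\alpha$, is still $K$-Lipschitz in $\alpha$ (by the Schwarz-improved bound $|f_\alpha(z)-f_\beta(z)|\le K|\alpha-\beta|\,|z|$ the constant is unchanged), and is still non-degenerate (linearizability is a property of the germ at $0$, hence unchanged by the rescaling, and denseness of the non-linearizable parameters passes to the open subinterval $I'$). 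When $\Delta(f_\alpha)\Subset\eps\D$, \Cref{cor:subdisk} gives $\Delta(g_\alpha)=\eps^{-1}\Delta(f_\alpha)$ and $r(g_\alpha)=\eps^{-1}r(f_\alpha)$, while the normalized linearizer changes only by a multiplicative scalar: $z\mapsto\phi^{g}_\alpha(r(g_\alpha)z)=\eps^{-1}\phi^{f}_\alpha(r(f_\alpha)z)$. Since $F$ and $B$ are vector spaces and $\Delta(f_\alpha)\Subset\D\iff\Delta(g_\alpha)\Subset\D$, it suffices to prove the theorem for the univalent family $(g_\alpha)$ on $I'$.

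Next I would apply \Cref{cor:follow}, which says a non-degenerate Lipschitz family satisfies \Cref{cond:bdd} --- in fact with an approximating sequence of bounded-type numbers on each side of the given Brjuno number, by \Cref{lem:pert}. So $(g_\alpha)$ is a continuous family of univalent maps satisfying \Cref{cond:bdd}, and this is exactly the input required by \cite{BC}. As flagged in \Cref{sub:other}, the two extra hypotheses under which the theorem of \cite{BC} is stated are inessential: univalence has now been arranged, and inspecting the proof there shows that it never uses analyticity of $\alpha\mapsto f_\alpha$ nor optimality of Brjuno's condition, only continuity of the family and \Cref{cond:bdd}. I would therefore run that proof unchanged.

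For completeness I would recall the shape of the \cite{BC} construction, a refinement of the one in \Cref{sub:assopt}. One builds inductively a sequence $\theta_n$ of bounded-type numbers with a nested family of intervals $I_n\ni\theta_n$, using \Cref{cond:bdd} at each stage to keep $\theta_n$ near $\theta_{n-1}$, to make $r(\theta_n)$ track prescribed targets, and --- via \Cref{lem:li} together with the fact that uniform convergence of holomorphic maps forces convergence of all derivatives on slightly smaller disks --- to make $\phi_{\theta_n}$ converge in each $C^m$-norm on the relevant circle. Interleaved with these ``approach'' steps are ``oscillation'' steps: taking bounded-type $\alpha_n$ tending to the current target with $r(\alpha_n)$ tending to some $\rho\in(0,r(\theta))$, Herman's theorem (or the \cite{GS} alternative) forces $\partial\Delta(f_{\alpha_n})$ to reach $\partial\D$ while also approaching the invariant curve $\phi_\theta(\rho\partial\D)$, so these boundaries oscillate a controlled amount; carrying the resulting obstruction to the limit produces a Brjuno number $\theta\in I'$ with $\partial\Delta(f_\theta)\Subset\D$ and $z\mapsto\phi_\theta(r(\theta)z)$ lying in $F$ but not in $B$. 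That the limit lands in $F$ rather than in $B$ is precisely where the hypotheses $C^\omega\subset F\subset_{\on{0}}C^0$ and $B\subset_{\on{c}}F$ are used (bounded subsets of $B$ have compact closure in $F$), exactly as in \cite{BC}; running the same scheme against a countable list of targets handles the ``countable union of Banach spaces / of compact sets'' reformulation. Undoing the rescaling of the first step then yields the statement for the original family.

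The substantive input has already been supplied in the body --- \Cref{lem:main} and \Cref{cor:follow}, i.e.\ that a Lipschitz non-degenerate family satisfies \Cref{cond:bdd}. Consequently the real work left in this theorem is of a bookkeeping nature, and this is where I expect the only genuine care to be needed: first, verifying line by line that the argument of \cite{BC} truly uses nothing beyond continuity of $\alpha\mapsto f_\alpha$ and \Cref{cond:bdd} (in particular that every place where analyticity in $\alpha$ or optimality of Brjuno's condition appears in \cite{BC} can be replaced by \Cref{cond:bdd}); and second, the functional-analytic packaging --- confirming that the limit object really lands in an arbitrary Fréchet space $F$ with $C^\omega\subset F\subset_{\on{0}}C^0$, and that its failure to belong to $B$ is exactly captured by the compactness built into $B\subset_{\on{c}}F$, so that the diagonal argument of \cite{BC} gives the countable-union form verbatim.
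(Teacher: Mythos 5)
Your proposal is correct and follows essentially the same route as the paper: reduce to a univalent family by restricting to a small disk and an open subinterval (as noted in the paper's \Cref{sub:other}, using \Cref{cor:subdisk} to identify the Siegel disk of the restriction), invoke \Cref{cor:follow} to obtain \Cref{cond:bdd} for a Lipschitz non-degenerate family, and then run the construction of \cite{BC}, observing it needs only continuity in $\alpha$ together with \Cref{cond:bdd}. The paper itself treats the theorem as an adaptation of \cite{BC}'s main statement once these reductions are in place, so your extra details (Hurwitz's theorem for uniform univalence near $\alpha_0$, Schwarz to see the Lipschitz constant survives the rescaling, and the sketch of the ``approach/oscillation'' induction) are consistent elaborations of the same argument rather than a different approach.
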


See section~1 of \cite{BC} to see how one deduces \Cref{thm:main} from \Cref{theo_general}.

\bibliographystyle{alpha}
\bibliography{bib}

\end{document}